\newif\iftwodates
\renewcommand\maketitle{\begin{titlepage}%
  \pagenumbering{Alph}  
  \let\footnotesize\small
  \let\footnoterule\relax
  \let\footnote\thanks
  \null\vfil
  \vskip 30\p@
  \begin{center}%
    {\LARGE \bf \@title \par}%
    \vskip 3em%
    {\large
     \lineskip .75em%
     \begin{tabular}[t]{c}%
       \@author
     \end{tabular}\par}%
     \vskip 1.5em%
  \end{center}\par
  \vfill
  \begin{center}
    \raisebox{1.5cm}{\includegraphics[width=0.58\textwidth]%
      {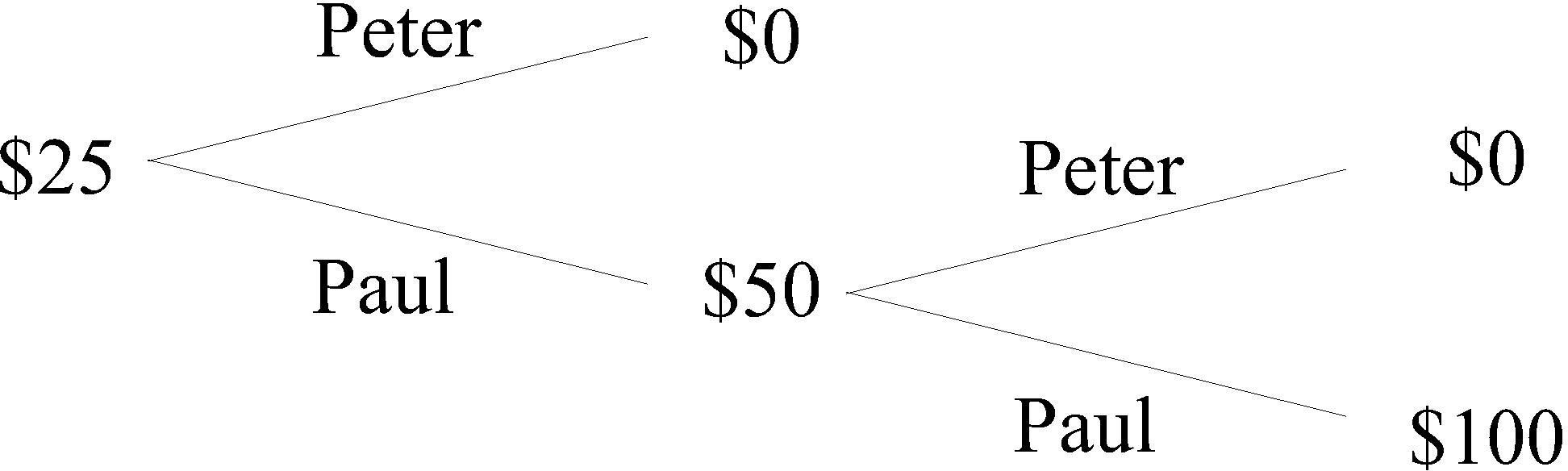}}%
    \hskip 3em%
    \includegraphics[width=0.29\textwidth]%
      {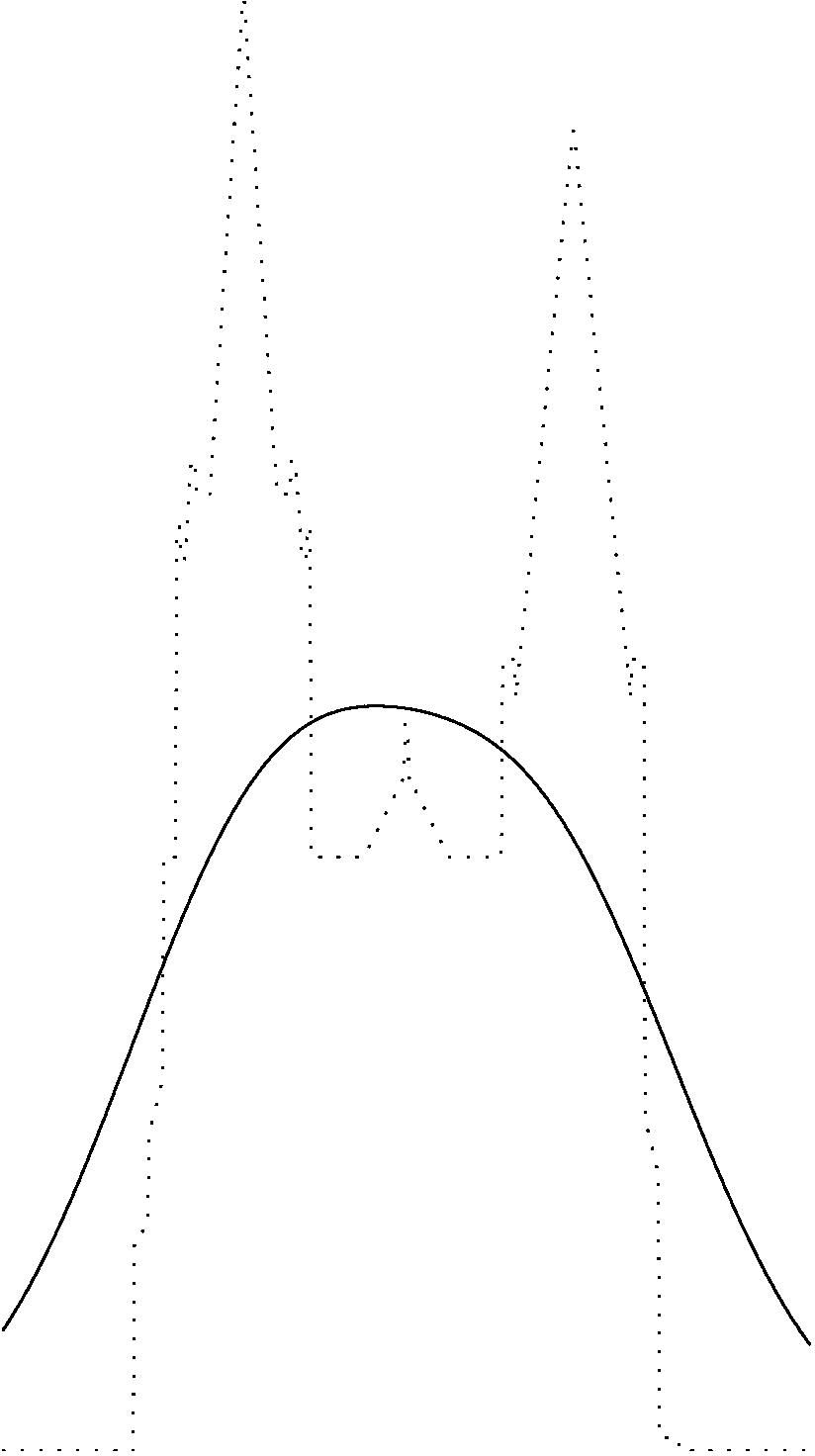}%
  \end{center}
  \@thanks
  \vfill
  \begin{center}
    {\large \bf The Game-Theoretic Probability and Finance Project}
  \end{center}
  \begin{center}
    {\large Working Paper \#\No}
  \end{center}
  \begin{center}
    {\iftwodates\large First posted \firstposted.
    Last revised \@date.\else\large\@date\fi}
  \end{center}
  \begin{center}
    Project web site:\\
    http://www.probabilityandfinance.com
  \end{center}
  \end{titlepage}%
  \setcounter{footnote}{0}%
  \global\let\thanks\relax
  \global\let\maketitle\relax
  \global\let\@thanks\@empty
  \global\let\@author\@empty
  \global\let\@date\@empty
  \global\let\@title\@empty
  \global\let\title\relax
  \global\let\author\relax
  \global\let\date\relax
  \global\let\and\relax
}
\renewenvironment{abstract}{%
  \titlepage\pagenumbering{roman}  
  \null\vfil
  \@beginparpenalty\@lowpenalty
  \begin{center}%
    \Large \bfseries \abstractname
    \@endparpenalty\@M
  \end{center}}%
  {\par\vfill\tableofcontents\thispagestyle{empty}\endtitlepage
  \pagenumbering{arabic}}  
\renewenvironment{thebibliography}[1]
  {\section*{\refname}%
  \addcontentsline{toc}{section}{\refname}
  \@mkboth{\MakeUppercase\refname}{\MakeUppercase\refname}%
  \list{\@biblabel{\@arabic\c@enumiv}}%
    {\settowidth\labelwidth{\@biblabel{#1}}%
    \leftmargin\labelwidth
    \advance\leftmargin\labelsep
    \@openbib@code
    \usecounter{enumiv}%
    \let\p@enumiv\@empty
    \renewcommand\theenumiv{\@arabic\c@enumiv}}%
    \sloppy
    \clubpenalty4000
    \@clubpenalty \clubpenalty
    \widowpenalty4000%
    \sfcode`\.\@m}
    {\def\@noitemerr
    {\@latex@warning{Empty `thebibliography' environment}}%
  \endlist}
\newcommand{\st}{\mathop{|}}
\newcommand{\given}{\mathrel{|}}
\newcommand{\amp}{\mathrel{\&}}
\newcommand{\FFF}{\mathfrak{F}}
\newcommand{\KKK}{\mathfrak{K}}
\newcommand{\GGG}{\mathfrak{G}}
\newcommand{\Prob}{\mathsf{P}}
\newcommand{\Expect}{\mathsf{E}}
\newcommand{\bbbr}{R}		
\newenvironment{Equation*}
  {$$\begin{array}{c}\displaystyle}
  {\end{array}$$}
\newcommand{\BigSkip}{\\[1.6mm]\displaystyle}
\newcommand{\BiggSkip}{\\[3.6mm]\displaystyle}
\newtheorem{lemma}{Lemma}
\newtheorem{Remark}{Remark}
\newtheorem{Example}{Example}
\newenvironment{proof}
  {\trivlist\item[\hskip\labelsep\textbf{Proof}]}
  {\endtrivlist}
\newcommand{\boxforqed}{\rule{.3em}{1.5ex}}
\newcommand{\qedtext}{\unskip\nobreak\hfil
  \penalty50\hskip1em\null\nobreak\hfil\boxforqed
  \parfillskip=0pt\finalhyphendemerits=0\endgraf}
\title{The origins and legacy of Kolmogorov's \emph{Grundbegriffe}}
\author{Glenn Shafer\\
Rutgers School of Business\\
\texttt{gshafer{\rm @}andromeda.rutgers.edu}
\and
Vladimir Vovk\\
Royal Holloway, University of London\\
\texttt{v.vovk{\rm @}rhul.ac.uk}}
\newcommand{\No}{4}
\newcommand{\firstposted}{February 8, 2003}
\begin{document}
\maketitle
\begin{abstract}
April 25, 2003, marked the 100th anniversary of the birth
of Andrei Nikolaevich Kolmogorov, the twentieth century's 
foremost contributor to the mathematical and philosophical
foundations of probability.
The year 2003 was also the 70th anniversary of the publication of 
Kolmogorov's \emph{Grundbegriffe der Wahrscheinlichkeitsrechnung}.

Kolmogorov's \emph{Grundbegriffe} put probability's modern mathematical formalism in place.
It also provided a philosophy of probability---an explanation of how the formalism
can be connected to the world of experience.
In this article, we examine the sources of these two aspects
of the \emph{Grundbegriffe}---the work of the earlier scholars
whose ideas Kolmogorov synthesized.
\end{abstract}


\section{Introduction}

Andrei Kolmogorov's \emph{Grundbegriffe der Wahrscheinlichkeitsrechnung}\nocite{kolmogorov:1933},
which set out the axiomatic basis for modern probability theory, appeared in 1933.  
Four years later, in his opening address to an international colloquium
at the University of Geneva, Maurice Fr\'echet praised
Kolmogorov for organizing and expositing a theory that
\'Emile Borel had created by adding countable additivity to classical probability.  
Fr\'echet put the matter this way
in the written version of his address
(1938b\nocite{frechet:1938b}, p.~54):
\begin{quotation}
  It was at the moment when Mr.~Borel introduced this new kind of additivity
  into the calculus of probability---in 1909, that is to say---that 
  all the elements needed to formulate explicitly 
  the whole body of axioms
  of (modernized classical) probability theory came together.

  It is not enough to have all the ideas in mind, to recall them 
  now and then; one must make sure that their totality is sufficient,
  bring them together explicitly, 
  and take responsibility for saying that nothing further
  is needed in order to construct the theory.

  This is what Mr.~Kolmogorov did.
  This is his achievement.
  (And we do not believe he wanted to claim any others,
  so far as the axiomatic theory is concerned.)
\end{quotation}
\noindent
Perhaps not everyone in Fr\'echet's audience agreed
that Borel had put everything on the table.
But surely many saw the \emph{Grundbegriffe} as a work of synthesis.
In Kolmogorov's axioms, and in his way of relating his axioms to the world of experience,
they must have seen traces of the work of many others---the work of Borel, yes,
but also the work of Fr\'echet himself, and that of Cantelli, Chuprov,
L\'evy, Steinhaus, Ulam, and von Mises.

Today, what Fr\'echet and his contemporaries knew is no longer known.
We know Kolmogorov and what came after;
we have mostly forgotten what came before.
This is the nature of intellectual progress.
But it has left many modern students
with the impression that Kolmogorov's axiomatization was born
full grown---a sudden brilliant triumph over confusion and chaos.

In order to see both the innovation and the synthesis in the \emph{Grundbegriffe},
we need a broad view of the foundations of probability and the advance of measure theory
from 1900 to 1930.
We need to understand how measure theory became more abstract during those decades,
and we need to recall what others were saying about axioms for probability, 
about Cournot's principle, and
about the relation of probability with measure and with frequency.
Our review of these topics draws mainly on work by 
authors listed by Kolmogorov in the \emph{Grundbegriffe}'s bibliography, 
especially Sergei Bernstein, \'Emile Borel, Francesco Cantelli, 
Maurice Fr\'echet, Paul L\'evy, Antoni {\L}omnicki, 
Evgeny Slutsky, Hugo Steinhaus, and Richard von Mises. 
Others enter into our story along the way, but for the most part 
we do not review
the contributions of authors whose foundational work does not seem
to have influenced Kolmogorov.  We say relatively little, for example, about
Harold Jeffreys, John Maynard Keynes, 
Jan {\L}ukasiewicz\nocite{lukasiewicz:1913}, 
Paolo Medolaghi\nocite{medolaghi:1907}, and Frank P. Ramsey\nocite{ramsey:1931}.
For further information about foundational and mathematical
work during the early twentieth century, see 
Urban (1923)\nocite{urban:1923},
Freudenthal and Steiner (1966)\nocite{freudenthal/steiner:1966,behnke/bertram/sauer:1966},
Regazzini (1987a,b)\nocite{regazzini:1987a,regazzini:1987b},
Krengel (1990)\nocite{krengel:1990},
von Plato (1994)\nocite{vonplato:1994},
Benzi (1995)\nocite{benzi:1995},
Holgate (1997)\nocite{holgate:1997}, Hochkirchen (1999)\nocite{hochkirchen:1999},
Bingham (2000)\nocite{bingham:2000}, 
and Bru (2003a)\nocite{bru:2003a}.

We are interested not only in Kolmogorov's mathematical
formalism, but also in his philosophy
of probability---how he proposed to relate the mathematical
formalism to the real world.
In a 1939 letter to Fr\'echet\nocite{kolmogorov:1939}, 
which we reproduce in \S\ref{app:kolfr},
Kolmogorov wrote, ``You are also 
right in attributing to me the opinion that the formal axiomatization
should be accompanied by an analysis of its real meaning.''
Kolmogorov devoted only two pages of the \emph{Grundbegriffe}
to such an analysis.  But the question was more important to him than 
this brevity might suggest.  We can study any mathematical
formalism we like, but we have the right to call it probability
only if we can explain how it relates to the empirical phenomena
classically treated by probability theory.

Kolmogorov's philosophy was frequentist.  
One way of understanding
his frequentism would be to place it in a larger social and cultural 
context, emphasizing perhaps Kolmogorov's role 
as the leading new Soviet mathematician.  We will not 
ignore this context, but we are more interested
in using the thinking of Kolmogorov and his predecessors 
to inform our own understanding of probability.  
In 1963\nocite{kolmogorov:1963},
Kolmogorov complained that his axioms had been so successful on the purely
mathematical side that
many mathematicians had lost interest in understanding how probability theory
can be applied.  This situation persists today.  Now, more than ever,
we need fresh thinking about how probability theory relates
to the world, and there is no better starting point for this thinking
than the works of Kolmogorov and his predecessors early in 
the twentieth century.

We begin by looking at the classical foundation
that Kolmogorov's measure-theoretic foundation replaced:
equally likely cases.
In \S\ref{sec:classical}, we review how probability was defined in terms of 
equally likely cases, how the rules of the calculus of probability were
derived from this definition, and
how this calculus was related to the real world by Cournot's principle.
We also look at some paradoxes discussed at the time.

In \S\ref{sec:measuretheory}, we sketch the 
development of measure theory and its increasing entanglement with
probability during the first three decades of the twentieth century.
This story centers on Borel,
who introduced countable additivity into pure mathematics
in the 1890s and then brought it to the center of probability theory,
as Fr\'echet noted, in 1909, when he first stated and more or less
proved the strong law of large numbers for coin tossing.
But it also features Lebesgue, Radon, Fr\'echet, Daniell, Wiener, Steinhaus,
and Kolmogorov himself.

Inspired partly by Borel and partly by the challenge issued by Hilbert
in 1900, a whole series of mathematicians proposed abstract frameworks
for probability during the three decades we are emphasizing.
In~\S\ref{sec:hilbert}, we look at some of these, beginning with the
doctoral dissertations by Rudolf Laemmel and Ugo Broggi in the first
decade of the century and including an early contribution
by Kolmogorov himself, written in 1927,
five years before he started work on the \emph{Grundbegriffe}\nocite{kolmogorov:1929}.

In~\S\ref{sec:grundbegriffe}, we finally turn
to the \emph{Grundbegriffe} itself.  Our review of it will confirm what
Fr\'echet said in 1937 and what Kolmogorov himself says in the preface:  
it was a synthesis and a manual, not a report on new research.  
Like any textbook, its mathematics was novel for most of its readers.
But its real originality was rhetorical and philosophical.

In \S\ref{sec:reception} we discuss how
the \emph{Grundbegriffe} was received---at the time and in the following decades.
Its mathematical framework for probability, as we know,
came to be regarded as fundamental,
but its philosophical grounding for probability was largely ignored.

\section{The classical foundation}
\label{sec:classical}

The classical foundation of probability theory, which began with the 
notion of equally likely cases,
held sway for two hundred years.  Its elements were 
put in place by Jacob Bernoulli\nocite{bernoulli:1713} and 
Abraham De Moivre\nocite{demoivre:1718} early in the eighteenth century, 
and they remained in place in the early twentieth
century.  Even today the classical foundation is used in teaching probability.

Although twentieth-century proponents of new approaches were fond of deriding
the classical foundation as naive or circular, it can be defended.
Its basic mathematics can be explained in a few words,
and it can be related to the real world by Cournot's principle,
the principle that an event with small or zero probability will not occur.
This principle was advocated in France and Russia
in the early years of the twentieth century but disputed in Germany.
Kolmogorov adopted it in the \emph{Grundbegriffe}.

In this section we review the mathematics of equally likely cases 
and recount the discussion of Cournot's principle, contrasting the support
for it in France with German efforts to replace it with other ways of relating
equally likely cases to the real world.  
We also discuss two paradoxes contrived at the end of the nineteenth century 
by Joseph Bertrand, which illustrate the care that must be taken 
with the concept of relative probability.  The lack of 
consensus on how to make philosophical sense of equally likely cases 
and the confusion engendered by Bertrand's paradoxes were two sources
of dissatisfaction with the classical theory.

\subsection{The classical calculus}
\label{subsec:equal}

The classical definition of probability
was formulated by Jacob Bernoulli in \emph{Ars Conjectandi} 
(1713)\nocite{bernoulli:1713}
and Abraham De Moivre in 
\emph{The Doctrine of Chances} (1718)\nocite{demoivre:1718}:
the probability of an event is the ratio of the 
number of equally likely cases that favor it
to the total number of equally likely cases possible under the circumstances.

From this definition, De Moivre derived two rules for probability.
The \emph{theorem of total probability}, or the \emph{addition theorem},
says that if $A$ and $B$ cannot both happen, then
\begin{Equation*}
  \mbox{probability of $A$ or $B$ happening}      \hspace*{5cm}
\BigSkip
   = \frac{\mbox{\# of cases favoring $A$ or $B$}}{\mbox{total \# of cases}}
   \hspace*{1.8cm}
\BiggSkip
     \hspace*{1.2cm}
   = \frac{\mbox{\# of cases favoring $A$}}{\mbox{total \# of cases}}
   + \frac{\mbox{\# of cases favoring $B$}}{\mbox{total \# of cases}}
\BiggSkip
   = (\mbox{probability of $A$}) + (\mbox{probability of $B$}).
\end{Equation*}%
The \emph{theorem of compound probability},
or the \emph{multiplication theorem}, says 
\begin{Equation*}
  \mbox{probability of both $A$ and $B$ happening}  \hspace*{4cm}
\BigSkip
   = \frac{\mbox{\# of cases favoring both $A$ and $B$}}{\mbox{total \# of cases}}
   \hspace*{3cm}
\BiggSkip
     \hspace*{1cm}
   = \frac{\mbox{\# of cases favoring $A$}}{\mbox{total \# of cases}}
   \times \frac{\mbox{\# of cases favoring both $A$ and $B$}}{\mbox{\# of cases favoring $A$}}
\BiggSkip
   = (\mbox{probability of $A$}) \times  
     (\mbox{probability of $B$ if $A$ happens}).
\end{Equation*}%
These arguments were still standard fare in probability textbooks
at the beginning of the twentieth century,
including the great treatises by Henri Poincar\'e (1896)\nocite{poincare:1896} in France,
Andrei Markov (1900)\nocite{markov:1900} in Russia,
and Emanuel Czuber (1903)\nocite{czuber:1903} in Germany.
Some years later we find them in Guido Castelnuovo's Italian textbook
(1919)\nocite{castelnuovo:1919},
which has been held out as the acme of the genre
(Onicescu 1967)\nocite{onicescu:1967}.

Only the British held themselves aloof from the classical theory,
which they attributed to Laplace\nocite{laplace:1812,laplace:1814} 
and found excessively apriorist.  
The British style of introducing probability, going back to
Augustus De Morgan (1838, 1847)\nocite{demorgan:1838,demorgan:1847},
emphasized combinatorics without dwelling on 
formalities such as the rules of total and compound 
probability, and the British statisticians preferred to
pass over the combinatorics as quickly as possible, so as to get
on with the study of ``errors of observations'' 
as in Airy (1861)\nocite{airy:1861}.  
According to his son Egon (1990\nocite{pearson:1990}, pp.~13, 71),
Karl Pearson recommended to his students 
nothing more theoretical than books by De Morgan,
William Whitworth (1878)\nocite{whitworth:1878}, and the Danish statistician
Harald Westergaard (1890, in German)\nocite{westergaard:1890}.

The classical theory, which had begun in England with De Moivre,
returned to the English language in an influential book
published in New York by the actuary Arne Fisher (1915)\nocite{fisher:1915},
who had immigrated to the United States from Denmark at the age of 16.
Fisher's book played an 
important role in bringing the methods of the great Scandinavian mathematical
statisticians, Jorgen Gram, Thorvald Thiele, Harald Westergaard, and Carl Charlier,
into the English-speaking world 
(Molina 1944\nocite{molina:1944}, Lauritzen 2002\nocite{lauritzen:2002}).  
The Scandinavians stood between the very empirical British statisticians 
on the one hand and the
French, German, and Russian probabilists on the other; they were serious
about advancing mathematical statistics beyond where Laplace had left it,
but they valued the classical foundation for probability.

After Fisher's book appeared, Americans adopted the classical rules but 
looked for ways to avoid the classical arguments based on equally likely cases; 
the two most notable American probability textbooks of the 1920s,
by Julian Lowell Coolidge (1925)\nocite{coolidge:1925} at Harvard and 
Thornton C. Fry (1928)\nocite{fry:1928}
at Bell Telephone Laboratories, replaced the classical arguments
for the rules of probability with arguments based on the assumption
that probabilities are limiting frequencies.
But this approach did not endure, and the only American probability textbook 
from before the second world war that remained in print 
in the second half of the century was the more classical one
by the Petersburg-educated Stanford professor
James~V. Uspensky (1937)\nocite{uspensky:1937}.

\subsubsection{Geometric probability}

Geometric probability was incorporated into the classical theory
in the early nineteenth century.
Instead of counting equally likely cases,
one measures their geometric extension---their area or volume.
But probability is still a ratio, and the rules of total and 
compound probability are still theorems. 
This is explained clearly by Antoine-Augustin Cournot in his 
influential treatise on probability and statistics, 
\emph{Exposition de la th\'eorie des
chances et des probabilit\'es}, published in 
1843\nocite{cournot:1843} (p.~29).  In his commentary in Volume XI of
Cournot's \emph{{\OE}uvres compl\`etes},
Bernard Bru traces the idea back to the mathematician Joseph Fourier and 
the naturalist George-Louis Leclerc de Buffon.

This understanding of geometric probability did not change
in the early twentieth century, when Borel and Lebesgue expanded
the class of sets for which we can define geometric extension.
We may now have more events with which to work, but we define and study
geometric probabilities in the same way as before.  Cournot would have seen nothing
novel in Felix Hausdorff's definition of probability in the chapter on 
measure theory in his 1914 treatise on set theory\nocite{hausdorff:1914}
(pp.~416--417).

\subsubsection{Relative probability}

The classical calculus was enriched at the beginning of the twentieth
century by a formal and universal notation for relative probabilities.
In 1901\nocite{hausdorff:1901}, Hausdorff introduced 
the symbol $p_F(E)$ for what he called the \emph{relative
Wahrscheinlichkeit von $E$, posito $F$} 
(relative probability of $E$ given $F$).   
Hausdorff explained that this notation can be used for any two events $E$
and $F$, no matter what their temporal or logical relationship,
and that it allows one to streamline Poincar\'e's proofs
of the addition and multiplication theorems.
At least two other authors, Charles Saunders Peirce (1867\nocite{peirce:1867},
1878\nocite{peirce:1878})
and Hugh MacColl (1880\nocite{maccoll:1880}, 1897\nocite{maccoll:1897}),
had previously proposed universal notations for the 
probability of one event given another.  But Hausdorff's
notation was adopted by the influential textbook
author Emanuel Czuber (1903\nocite{czuber:1903}).  Kolmogorov used it
in the~\emph{Grundbegriffe}, 
and it persisted, especially in the German literature,
until the middle of the twentieth century,
when it was displaced by the more flexible $P(E \given F)$,
which Harold Jeffreys had introduced
in his \emph{Scientific Inference} (1931\nocite{jeffreys:1931}).%
\footnote{See the historical discussion in Jeffreys's 
          \emph{Theory of Probability} (1939\nocite{jeffreys:1939}),
          on p.~25 of the first or third editions or p.~26 of the second edition.
          Among the early adopters of Jeffreys's vertical stroke
          were Jerzy Neyman, who used $\mathrm{P}\{\mathrm{A}\given\mathrm{B}\}$
          in 1937\nocite{neyman:1937}, 
          and Valery Glivenko, who used $\boldsymbol{P}(A/B)$
          (with only a slight tilt)
          in 1939\nocite{glivenko:1939}.}

Although Hausdorff's relative probability resembles
today's conditional probability, other classical authors used 
``relative'' in other ways.
For Sylvestre-Fran\c{c}ois Lacroix (1822, p.~20)\nocite{lacroix:1822}
and Jean-Baptiste-Joseph Liagre (1879, p.~45)\nocite{liagre:1879}, the 
probability of $E$ relative to an 
incompatible event $F$ was $P(E)/(P(E)+P(F))$.
For Borel (1914\nocite{borel:1914}, pp.~58--59), the relative probability of $E$ 
was $P(E)/P(\mathrm{not }E)$.  Classical authors could use the phrase however they 
liked, because it did not mark a sharp distinction like the modern distinction
between absolute and conditional probability.  Nowadays some authors write
the rule of compound probability in the form
$$
    P(A \amp B) = P(A) P(B \given A)
$$
and regard the conditional probability $P(B \given A)$ as fundamentally
different in kind from the absolute probabilities
$P(A \amp B)$ and $P(A)$.  But for the classical
authors, every probability
was evaluated in a particular situation, with respect to the equally
likely cases in that situation.  When these authors
wrote about the probability of $B$ ``after $A$ has
happened'' or ``when $A$ is known'', these
phrases merely identified
the situation; they did not indicate that a different kind of probability
was being considered.

Before the \emph{Grundbegriffe}, it was unusual 
to call a probability or expected value ``conditional'' rather than ``relative'',
but the term does appear.  George Boole may have been the first to
use it, though only casually.
In his \emph{Laws of Thought}\nocite{boole:1854}, in 1854,
Boole calls an event considered under a certain 
condition a conditional event, and he discusses the
probabilities of conditional events.  Once (p.~261),
and perhaps only once, he abbreviates this to ``conditional probabilities''.
In 1887, in his \emph{Metretike}\nocite{edgeworth:1887,edgeworth:1996,mirowski:1994},
Francis Edgeworth, citing Boole, systematically called
the probability of an effect given a cause a ``conditional probability'' 
(Mirowski 1994\nocite{mirowski:1994}, p.~82).  
The Petersburg statistician Aleksandr Aleksandrovich Chuprov,
who was familiar with Edgeworth's work, used the Russian equivalent
(условная вероятность)
in his 1910 book\nocite{chuprov:1910} (p.~151).
A few years later, in 1917, the German equivalent of ``conditional expectation''
(bedingte mathematische Erwartung) appeared in a book by Chuprov's friend
Ladislaus von Bortkiewicz\nocite{vonbortkiewicz:1917}, professor of statistics in Berlin.
We see ``conditional probability'' again in English in 1928,
in Fry's textbook\nocite{fry:1928} (p.~43).

We should also note that different authors used the term 
``compound probability'' (``probabilit\'e compos\'ee'' in French) in different ways.
Some authors (e.g., Poincar\'e 1912\nocite{poincare:1912}, p.~39)
seem to have reserved it for the case where the two events are independent;
others (e.g., Bertrand 1889\nocite{bertrand:1889}, p.~3)
used it in the general case as well.

\subsection{Cournot's principle}
\label{subsec:cournot}

An event with very small probability is \emph{morally impossible};
it will not happen.
Equivalently, an event with very high probability is \emph{morally certain};
it will happen.
This principle was first formulated within mathematical probability by Jacob Bernoulli.
In his \emph{Ars Conjectandi}\nocite{bernoulli:1713}, published in 1713,
Bernoulli proved a celebrated theorem:
in a sufficiently long sequence of independent trials of an event, 
there is a very high probability that the frequency with which the event happens
will be close to its probability.
Bernoulli explained that we can treat the very high probability as moral certainty
and so use the frequency of the event as an estimate of its probability.
This conclusion was later called the law of large numbers.

Probabilistic moral certainty was widely discussed in the eighteenth century.
In the 1760s, the French savant Jean d'Alembert muddled matters
by questioning whether the prototypical event of very small probability,
a long run of many happenings of an event as likely to fail as happen on each trial,
is possible at all.
A run of a hundred may be metaphysically possible, he felt, but it is physically impossible.
It has never happened and never will happen
(d'Alembert 1761, 1767\nocite{dalembert:1761,dalembert:1767}; Daston 1979\nocite{daston:1979}).
In 1777\nocite{buffon:1777}, Buffon argued that the distinction
between moral and physical certainty was one of degree.
An event with probability $9999/10000$ is morally certain;.
an event with much greater probability, such as 
the rising of the sun, is physically certain~(Loveland 2001)\nocite{loveland:2001}.

Cournot, a mathematician now remembered as an economist and 
a philosopher of science (Martin 1996, 1998\nocite{martin:1996,martin:1998}), 
gave the discussion a nineteenth-century cast in his 1843 treatise.
Being equipped with the idea of geometric probability, Cournot could talk about 
probabilities that are vanishingly small.
He brought physics to the foreground.  It may be mathematically
possible, he argued, for a heavy cone to stand in equilibrium
on its vertex, but it is physically impossible.
The event's probability is vanishingly small.  Similarly, it 
is physically impossible for the frequency of an event in a 
long sequence of trials to differ substantially
from the event's probability (1843\nocite{cournot:1843}, pp.~57, 106).

In the second half of the nineteenth century, the principle that an 
event with a vanishingly small probability will not happen
took on a real role in physics, most saliently in Ludwig Boltzmann's statistical
understanding of the second law of thermodynamics.  As Boltzmann explained 
in the 1870s, dissipative processes
are irreversible because the probability of a state with entropy far from 
the maximum is vanishingly small 
(von Plato 1994\nocite{vonplato:1994}, p.~80; 
Seneta 1997\nocite{johnson/kotz:1997,seneta:1997}).
Also notable was Henri Poincar\'e's use of the principle
in the three-body problem\nocite{poincare:1890,vonplato:1994}.
Poincar\'e's recurrence theorem,
published in 1890\nocite{poincare:1890},
says that an isolated mechanical system
confined to a bounded region of its phase space
will eventually return arbitrarily close to its initial state,
provided only that this initial state is not exceptional.
Within any region of finite volume,
the states for which the recurrence does not hold are exceptional
inasmuch as they are contained in subregions whose total volume is arbitrarily small.

Saying that an event of very small or vanishingly small probability 
will not happen is one thing.  Saying that probability
theory gains empirical meaning only by ruling out the happening of such events
is another.  Cournot seems to have been the first to say explicitly that 
probability theory does gain empirical meaning only by declaring
events of vanishingly small probability to be impossible:  
    \begin{quotation}
       \noindent
       \dots \textit{The physically impossible event is therefore the one that
       has infinitely small probability}, and only this remark gives 
       substance---objective and phenomenal value---to the theory 
       of mathematical probability (1843\nocite{cournot:1843} p.~78).%
\footnote{The phrase ``objective and phenomenal'' refers to
          Kant's distinction between the noumenon, or thing-in-itself, 
          and the phenomenon, or object of experience
          (Daston 1994\nocite{daston:1994}).} 
    \end{quotation}
After the second world war (see \S\ref{subsubsec:cournotafter}), some authors began to use
``Cournot's principle'' for the principle that an event of 
very small or zero probability singled out in advance will not happen,
especially when this principle is advanced as the means by which a 
probability model is given empirical meaning.

\subsubsection{The viewpoint of the French probabilists}\label{subsubsec:french}

In the early decades of the twentieth century, probability theory
was beginning to be understood as pure mathematics.  What does 
this pure mathematics have to do with the real world?  The 
mathematicians who revived research in probability theory in France
during these decades, \'Emile Borel, Jacques Hadamard, Maurice Fr\'echet, and Paul L\'evy, 
made the connection by
treating events of small or zero probability as impossible.

Borel explained this repeatedly,
often in a style more literary than mathematical or philosophical
(Borel 1906, 1909b, 1914, 1930\nocite{borel:1906,borel:1909b,borel:1914,borel:1930}).
According to Borel, a result of the probability calculus deserves to be called objective
when its probability becomes so great as to be practically the same as certainty.
His many discussions of the considerations that go into assessing
the boundaries of practical certainty culminated in a classification
more refined than Buffon's.
A probability of $10^{-6}$, he decided, is negligible at the human scale,
a probability of $10^{-15}$ at the terrestrial scale,
and a probability of $10^{-50}$ at the cosmic scale
(Borel 1939, pp.~6--7)\nocite{borel:1939}.  

Hadamard, the preeminent analyst who did pathbreaking
work on Markov chains in the 1920s~(Bru 2003a)\nocite{bru:2003a},
made the point in a different way.  
Probability theory, he said, is based on two basic notions:  
the notion of perfectly equivalent (equally likely) events and the 
notion of a very unlikely event (Hadamard 1922, p.~289)\nocite{hadamard:1922}.
Perfect equivalence is a mathematical assumption, which cannot be verified.
In practice, equivalence
is not perfect---one of the grains in a cup of sand may be more likely than 
another to hit the ground first when they are thrown out of the cup.
But this need not prevent us from applying the principle of the very unlikely event.
Even if the grains are not exactly the same,
the probability of any particular one hitting the ground first is negligibly small.
Hadamard cited Poincar\'e's work on the three-body problem in this connection,
because Poincar\'e's conclusion is insensitive
to how one defines the probabilities for the initial state.
Hadamard was the teacher of both Fr\'echet and L\'evy.

It was L\'evy, perhaps, who had the strongest sense of probability's being pure mathematics
(he devoted most of his career as a mathematician to probability),
and it was he who expressed most clearly in the 1920s
the thesis that Cournot's principle is probability's only bridge to reality.
In his \emph{Calcul des probabilit\'es}\nocite{levy:1925}
L\'evy emphasized the different roles of Hadamard's two basic notions.
The notion of equally likely events, L\'evy explained, suffices as a foundation
for the mathematics of probability, but so long as we base our reasoning 
only on this notion, our probabilities are merely subjective.  
It is the notion of a very unlikely event
that permits the results of the mathematical theory to take on practical
significance (\citealt{levy:1925}, pp.~21, 34; see also \citealt{levy:1937}, p.~3).
Combining the notion of a very unlikely event
with Bernoulli's theorem, we obtain the notion of the objective probability of an event,
a physical constant that is measured by relative frequency.
Objective probability, in L\'evy's view,
is entirely analogous to length and weight,
other physical constants whose empirical meaning is also defined
by methods established for measuring them to a reasonable approximation
(\citealt{levy:1925}, pp.~29--30).

By the time he undertook to write the \emph{Grundbegriffe},
Kolmogorov must have been very familiar with L\'evy's views.  
He had cited L\'evy's 1925 book in his 1931 article on Markov processes
and subsequently, during his visit to France,
had spent a great deal of time talking with L\'evy about probability.
But he would also have learned about Cournot's principle from the Russian literature.
The champion of the principle in Russia had been Chuprov,
who became professor of statistics in Petersburg in 1910.
Like the Scandinavians, Chuprov wanted to bridge the gap between
the British statisticians and the continental mathematicians
(Sheynin 1996\nocite{sheynin:1996},
Seneta 2001\nocite{seneta:2001}).  
He put Cournot's principle---which he called ``Cournot's lemma''---at the heart
of this project; it was, he said, a basic principle of the logic of 
the probable (Chuprov 1910\nocite{chuprov:1910}, Sheynin 1996\nocite{sheynin:1996}, pp.~95--96).  
Markov, Chuprov's neighbor in Petersburg, learned about the burgeoning field
of mathematical statistics from Chuprov (Ondar 1981\nocite{ondar:1981}),
and we see an echo of Cournot's principle in Markov's textbook
(1912\nocite{markov:1912}, p.~12 of the German edition):
\begin{quotation}
   The closer the probability of an event is to one,
   the more reason we have to expect the event to happen
   and not to expect its opposite to happen.

   In practical questions, we are forced to regard as certain events
   whose probability comes more or less close to one,
   and to regard as impossible events whose probability is small.

   Consequently, one of the most important tasks of probability theory
   is to identify those events whose probabilities come close to one
   or zero.
\end{quotation}
The Russian statistician Evgeny Slutsky discussed Chuprov's views
in his influential article on limit theorems, published in 
German in 1925\nocite{slutsky:1925}.
Kolmogorov included L\'evy's book and Slutsky's article in 
his bibliography, but not Chuprov's book.
An opponent of the Bolsheviks,
Chuprov was abroad when they seized power, and he never returned home.
He remained active in Sweden and Germany,
but his health soon failed, and he died in 1926, at the age of 52.

\subsubsection{Strong and weak forms of Cournot's principle}\label{subsubsec:twoforms}

Cournot's principle has many variations.
Like probability, moral certainty can be subjective or objective.  Some authors
make moral certainty sound truly equivalent to absolute certainty; others emphasize its
pragmatic meaning.

For our story, it is important to distinguish between the strong and weak
forms of the principle (Fr\'echet 1951, p.~6\nocite{frechet:1951};
Martin 2003\nocite{martin:2003}). 
The strong form refers to an event of small or zero probability that we 
single out in advance of a single trial:  it says the 
event will not happen on that trial.  
The weak form says that an event with very small probability will
happen very rarely in repeated trials.  

Borel, L\'evy, and Kolmogorov all enunciated Cournot's principle in its strong 
form.  In this form, the principle combines with Bernoulli's theorem to produce
the unequivocal conclusion that an event's probability will be approximated
by its frequency in a particular sufficiently long sequence of independent trials.
It also provides a direct foundation for statistical testing.  If the empirical meaning of
probability resides precisely in the non-happening of small-probability events 
singled out in advance,
then we need no additional principles to justify rejecting
a hypothesis that gives small probability to an event we single out in advance
and then observe to happen (Bru 1999\nocite{bru:1999}).

Other authors, including Chuprov, enunciated Cournot's principle in its weak form,
and this can lead in a different direction.  The weak principle
combines with Bernoulli's theorem to produce
the conclusion that an event's probability will \emph{usually} be approximated
by its frequency in a sufficiently long sequence of independent trials,
a general principle that has the weak principle as a special case\label{p:specialcase}.
This was pointed out by Castelnuovo in his 1919 textbook
(p.~108)\nocite{castelnuovo:1919}.
Castelnuovo called the general principle the \emph{empirical law of chance}
(la legge empirica del caso):
\begin{quotation}
   In a series of trials repeated a large number of times under identical conditions,
   each of the possible events happens with a (relative) frequency
   that gradually equals its probability.
   The approximation usually improves
   with the number of trials. (Castelnuovo 1919, p.~3)\nocite{castelnuovo:1919}
\end{quotation}
Although the special case where the probability is close to one is sufficient to 
imply the general principle, Castelnuovo preferred to begin his introduction to 
the meaning of probability by enunciating the general principle, and so he can be
considered a frequentist.  His approach was influential at the time.
Maurice Fr\'echet and Maurice Halbwachs adopted it in their textbook 
in 1924\nocite{frechet/halbwachs:1924}.
It brought Fr\'echet to the same understanding of objective probability as L\'evy:
it is a physical constant that is measured by relative frequency
(1938a\nocite{frechet:1938a}, p.~5;
1938b\nocite{frechet:1938b}, pp.~45--46).

The weak point of Castelnuovo and Fr\'echet's position lies
in the modesty of their conclusion:
they conclude only that an event's probability is \emph{usually}
approximated by its frequency.
When we estimate a probability from an observed frequency,
we are taking a further step:
we are assuming that what usually happens has happened in the particular case.
This step requires the strong form of Cournot's principle.
According to Kolmogorov (1956\nocite{kolmogorov:1956},
p.~240 of the 1965 English edition)\label{p:weak},
it is a reasonable step only if ``we have some reason for assuming''
that the position of the particular case among other potential ones
``is a regular one, that is, that it has no special features''.

\subsubsection{British indifference and German skepticism}\label{subsubsec:britger}

The mathematicians who worked on probability in France in the early
twentieth century were unusual in the extent to which they
delved into the philosophical side of their subject.
Poincar\'e had made a mark in the philosophy of science as well as in mathematics,
and Borel, Fr\'echet, and L\'evy tried to emulate him.
The situation in Britain and Germany was different.

In Britain there was little mathematical work in probability proper in this period.
In the nineteenth century, British interest in probability
had been practical and philosophical, not mathematical
(Porter 1986, p.~74ff)\nocite{porter:1986}.
British empiricists such as Robert Leslie Ellis (1849\nocite{ellis:1849}) 
and John Venn (1888\nocite{venn:1888}) accepted the usefulness of probability but
insisted on defining it directly in terms of frequency,
leaving little meaning or role for the law of large numbers
and Cournot's principle (Daston 1994\nocite{daston:1994}).   
These attitudes, 
as we noted in \S\ref{subsec:equal},
persisted even after Pearson and Fisher had brought Britain into a leadership role
in mathematical statistics.  The British statisticians had little interest in
mathematical probability theory and hence no puzzle to solve 
concerning how to link it to the real world.
They were interested in reasoning directly about frequencies.

In contrast with Britain, Germany did see a substantial amount of mathematical
work in probability
during the first decades of the twentieth century, much of it published 
in German by Scandinavians and eastern Europeans.
But few German mathematicians of the first rank fancied themselves philosophers.
The Germans were already pioneering the division of labor
to which we are now accustomed, between mathematicians who prove theorems about
probability and philosophers, logicians, statisticians, and
scientists who analyze the meaning of probability.
Many German statisticians believed that one must decide
what level of probability will count as practical certainty in order to apply
probability theory (von Bortkiewicz 1901\nocite{vonbortkiewicz:1901},
p.~825; Bohlmann 1901\nocite{bohlmann:1901}, p.~861),
but German philosophers did not give 
Cournot's principle a central role.

The most cogent and influential of the German philosophers who discussed 
probability in the late nineteenth century was Johannes von Kries, whose 
\emph{Principien der Wahrscheinlichkeitsrechnung} first appeared in 1886.
Von Kries rejected what he called the orthodox philosophy
of Laplace and the mathematicians who followed him.  As von Kries's saw it,
these mathematicians began with a subjective concept of probability
but then claimed to establish the existence
of objective probabilities
by means of a so-called law of large numbers,
which they erroneously derived by combining
Bernoulli's theorem with the belief
that small probabilities can be neglected.
Having both subjective and objective probabilities at their disposal,
these mathematicians then used Bayes's theorem
to reason about objective probabilities for almost any question
where many observations are available.
All this, von Kries believed, was nonsense.
The notion that an event with very small probability is impossible was,
in von Kries's eyes, simply d'Alembert's mistake.

Von Kries believed that objective probabilities sometimes exist, but only under
conditions where equally likely cases can legitimately be identified.
Two conditions, he thought, are needed:  
\begin{itemize} 
   \item
      Each case is produced by equally many of the possible arrangements of 
      the circumstances, and this remains true when we look back in time to earlier 
      circumstances that led to the current ones.
      In this sense, the relative sizes of the cases are \emph{natural}.
   \item
      Nothing besides these circumstances affects our expectation about the
      cases.  In this sense, the Spielr\"aume%
\footnote{In German, Spiel means ``game'' or ``play'', and Raum (plural R\"aume) means
    ``room'' or ``space''.  In most contexts, Spielraum 
    can be translated as ``leeway'' or ``room for maneuver''.
    For von Kries, the Spielraum for each case was the set of all arrangements
    of the circumstances that produce it.}
      are \emph{insensitive}.
\end{itemize}
Von Kries's \emph{principle of the Spielr\"aume}
was that objective probabilities can be calculated from equally likely cases when these
conditions are satisfied.  He considered this principle analogous to
Kant's principle that everything that exists has a cause.  Kant thought that 
we cannot reason at all without the principle of cause and effect.  Von Kries thought that we 
cannot reason about objective probabilities without the principle of the Spielr\"aume.

Even when an event has an objective probability, von Kries saw no legitimacy in the law
of large numbers.  Bernoulli's theorem is valid, he thought, but it 
tells us only that a large deviation of an event's frequency from its probability is just
as unlikely as some other unlikely event, say a long run of successes.
What will actually happen is another matter.
This disagreement between Cournot and von Kries 
can be seen as a quibble about words.  Do we say that an event
will not happen (Cournot), or do we say merely that it is as unlikely
as some other event we do not expect to happen (von Kries)?  Either way, we proceed
as if it will not happen.  But the quibbling has its reasons.  Cournot
wanted to make a definite prediction, because this provides
a bridge from probability theory to the world of phenomena---the real world,
as those who have not studied Kant would say.  
Von Kries thought he had a different
way of connecting probability theory with phenomena.

Von Kries's critique of moral certainty and the law of large numbers
was widely accepted in Germany
(Kamlah, 1983\nocite{kamlah:1983}).
Czuber, in the influential textbook we have already mentioned, named 
Bernoulli, d'Alembert, Buffon, and De Morgan as advocates of 
moral certainty and declared them all wrong;
the concept of moral certainty, he said, violates
the fundamental insight that an event of ever so small a probability can still happen
(Czuber 1903\nocite{cournot:1843}, p.~15; see also Meinong 1915\nocite{meinong:1915}, p.~591).

This wariness about ruling out the happening of events whose
probability is merely very small does not seem to have prevented 
acceptance of the idea that zero probability represents impossibility.
Beginning with Wiman's work on continued fractions in 1900, mathematicians writing
in German had worked on showing that 
various sets have measure zero, and everyone understood that the point was to show 
that these sets are impossible
(see Felix Bernstein 1912, p.~419\nocite{bernstein:1912}).
This suggests a great gulf between zero probability and merely small probability.
One does not sense such a gulf in the writings of Borel and his French colleagues;
as we have seen, the vanishingly small, for them, was merely
an idealization of the very small.

Von Kries's principle of the Spielr\"aume did not endure,
for no one knew how to use it.  But his project of providing 
a Kantian justification for the uniform distribution of probabilities 
remained alive in German philosophy in the first decades of the twentieth century
(Meinong 1915\nocite{meinong:1915}; Reichenbach 1916\nocite{reichenbach:1916}).  
John Maynard Keynes (1921)\nocite{keynes:1921} brought it into the English literature,
where it continues to echo, 
to the extent that today's probabilists, when asked about the philosophical grounding of
the classical theory of probability, are more likely to think about arguments for
a uniform distribution of probabilities than about Cournot's principle.

\subsection{Bertrand's paradoxes}
\label{subsec:bertrand}

How do we know cases are 
equally likely, and when something happens, do the cases that remain
possible remain equally likely?  In the decades before the \emph{Grundbegriffe},
these questions were frequently discussed in the context of paradoxes 
formulated by Joseph Bertrand, an influential French mathematician, in 
a textbook that he published in 1889\nocite{bertrand:1889} after teaching
probability for many decades (Bru and Jongmans 2001\nocite{bru/jongmans:2001}).

We now look at discussions by other authors of two of Bertrand's paradoxes:
Poincar\'e's discussion of the paradox of the three jewelry boxes,
and Borel's discussion of the paradox of the great circle.%
\footnote{In the literature of the period, ``Bertrand's paradox'' usually 
   referred to a third paradox, concerning two possible interpretations of the 
   idea of choosing a random chord on a circle.  Determining a chord by choosing 
   two random points on the circumference is not the same as determining it
   by choosing a random distance from the center and then a random orientation.}
The latter 
was also discussed by Kolmogorov and is now sometimes called the ``Borel-Kolmogorov 
paradox''.

\subsubsection{The paradox of the three jewelry boxes}

This paradox, laid out by Bertrand on pp.~2--3 of his textbook, 
involves three identical jewelry boxes, each with two drawers.  
Box A has gold medals in both drawers, Box B has silver medals in both, and 
Box C has a gold medal in one and a silver medal in the other.
Suppose we choose a box at random.  It will be Box C with probability $1/3$.
Now suppose we open at random one of the drawers in the box we have chosen.  
There are two possibilities for what we find:
\begin{itemize}
   \item
      We find a gold medal.  In this case, 
      only two possibilities remain:  the other drawer has a gold medal (we have chosen
      Box A), or the other drawer has a silver medal (we have chosen Box C).
   \item
      We find a silver medal.  Here also, 
      only two possibilities remain:  the other drawer has a gold medal (we have chosen
      Box C), or the other drawer has a silver medal (we have chosen Box B).
\end{itemize}
Either way, it seems, there are now two cases, one of which is that we have chosen
Box C.  So the probability that we have chosen Box C is now $1/2$.

Bertrand himself did not accept the conclusion that opening the drawer would change 
the probability of having Box C from $1/3$ to $1/2$, and Poincar\'e 
gave an explanation (1912\nocite{poincare:1912}, pp.~26--27).  Suppose the drawers in each 
box are labeled (where we cannot see) $\alpha$ and $\beta$, and suppose the gold medal
in Box C is in drawer $\alpha$.  Then there are six equally likely cases for the 
drawer we open:
\begin{enumerate}
  \item
     Box A, Drawer $\alpha$:  gold medal.
  \item
     Box A, Drawer $\beta$:  gold medal.
  \item
     Box B, Drawer $\alpha$:  silver medal.
  \item
     Box B, Drawer $\beta$:  silver medal.
  \item
     Box C, Drawer $\alpha$:  gold medal.
  \item
     Box C, Drawer $\beta$:  silver medal.
\end{enumerate}		
When we find a gold medal, say, in the drawer we have opened, three of these
cases remain possible:  case 1, case 2, and case 5.  Of the three, only one 
favors our having our hands on Box C.  So the probability for Box C is still $1/3$.

\subsubsection{The paradox of the great circle}

This paradox, on pp.~6--7 of Bertrand's textbook, 
begins with a simple question:  if we 
choose at random two points on the surface of a sphere,
what is the probability
that the distance between them is less than $10^{\prime}$?

By symmetry, we can suppose that the first point is known.  So one way of 
answering the question is to calculate the proportion of a sphere's
surface that lies within $10^{\prime}$ of a given point.  This is
$2.1 \times 10^{-6}$.%
\footnote{The formula Bertrand gives is correct, and it evaluates to this
          number.  Unfortunately, he then gives a numerical value that is twice as 
          large, as if the denominator of the ratio being calculated were the 
          area of a hemisphere rather than the area of the entire sphere.
          (Later in the book, on p.~169, he considers a version of 
          the problem where the point is drawn at random from a 
          hemisphere rather than from a sphere.)  Bertrand composed his book
          by drawing together notes from decades of teaching, and the
          carelessness with which he did this may have enhanced the sense
          of confusion that his paradoxes engendered.}
Bertrand also found a different answer using a different method.
After fixing the first point, he said, we can also assume that we 
know the great circle that connects the two points, because 
the possible chances are the same on great circles through the first point.
There are $360$ degrees---$2160$ arcs of $10^{\prime}$ each---in this great circle.  
Only the points in the two neighboring arcs are 
within $10^{\prime}$ of the first point, and so the probability sought is 
$2/2160$, or $9.3 \times 10^{-4}$.
This is many times larger than the first answer.
Bertrand suggested that both answers were equally valid, the original question being ill posed.
The concept of choosing points at random on a sphere was not, he said, sufficiently precise.

\begin{figure}[t]
   \begin{center}
      \includegraphics[width=3cm]{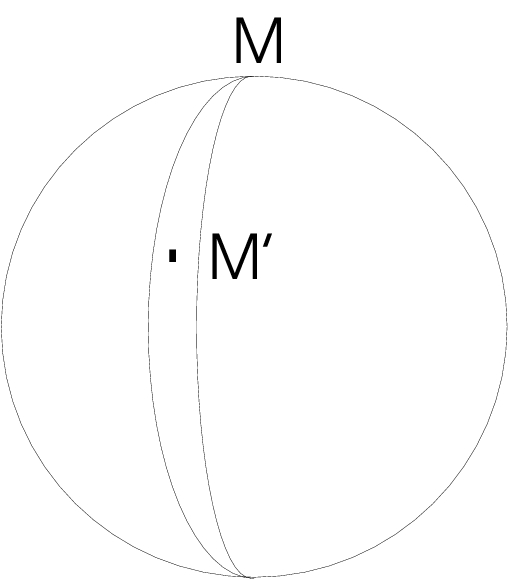}
      Borel's Figure 13.
    \end{center}
\end{figure}

In his own probability textbook, published in 1909\nocite{borel:1909b} (pp.~100--104),
Borel explained that Bertrand was mistaken.
Bertrand's first answer, obtained by assuming that equal areas on
the sphere have equal chances of containing the second point, is correct.
His second answer, obtained by assuming that equal arcs on a great 
circle have equal chances of containing it, is incorrect.  Writing M and M$^{\prime}$ for the
two points to be chosen at random on the sphere, Borel explained Bertrand's
mistake as follows:
\begin{quotation}
\noindent
\dots The error begins
when, after fixing the point M and the great circle, one assumes that the
probability of M$^{\prime}$ being on a given arc of the great circle is proportional
to the length of that arc.  If the arcs have no width, then in order to 
speak rigorously, we must assign the value zero to the probability that
M and M$^{\prime}$ are on the circle.  In order to avoid this factor of zero,
which makes any calculation impossible, one must consider a thin bundle of 
great circles all going through M, and then it is obvious that there is 
a greater probability for M$^{\prime}$ to be situated in a vicinity $90$ degrees
from M than in the vicinity of M itself (\emph{fig.}~13). 
\end{quotation}
To give this argument practical content, Borel discussed how one might  
measure the longitude of a point on the surface of the earth.
If we use astronomical observations, then we are measuring an 
angle, and errors in the measurement of the angle correspond to 
wider distances on the ground at the equator than at the poles.  
If we instead use geodesic measurements, say with a line
of markers on each of many meridians, then in order 
to keep the markers out of each other's way, we must 
make them thinner and thinner as we approach the poles.

\subsubsection{Appraisal}

Poincar\'e, Borel, and others who understood the principles of the classical theory
were able to resolve the paradoxes that Bertrand contrived.
Two principles emerge from the resolutions they offered:
\begin{itemize}
  \item
    The equally likely cases must be detailed enough
    to represent new information (e.g., we find
    a gold medal) in all relevant detail.  The remaining equally likely cases
    will then remain equally likely.
  \item  We may need to consider the real observed
    event of non-zero probability that is represented in an idealized way
    by an event of zero probability (e.g., a randomly chosen point
    falls on a particular meridian).
    We should pass to the limit only after absorbing the new information.
\end{itemize}
Not everyone found it easy to apply these principles, however, and the 
confusion surrounding the paradoxes was another source of dissatisfaction
with the classical theory.

Modern theories have tried to solve the problem by representing
explicitly the possibilities
for new information.  This Kolmogorov did this using a partition
(see p.~\pageref{p:decomposition} below).  Other authors have used
filtrations (Doob 1953\nocite{doob:1953}), event trees 
(Shafer 1996\nocite{shafer:1996}) and game protocols
(Shafer and Vovk 2001\nocite{shafer/vovk:2001}).  These devices
may be helpful, but they have not put the paradoxers
out of business.  Puzzles like Bertrand's paradox of the three jewelry 
boxes still flourish
(Bar-Hillel and Falk 1982\nocite{barhillel/falk:1982},
Shafer 1985\nocite{shafer:1985}, 
Barbeau 1993\nocite{barbeau:1993},
Halpern and Tuttle 1993\nocite{halpern/tuttle:1993}).

\section{Measure-theoretic probability before the \emph{Grundbegriffe}}
\label{sec:measuretheory}

A discussion of the relation between measure and probability 
in the first decades of the twentieth century must navigate many pitfalls, 
for measure theory itself evolved, beginning
as a theory about the measurability of sets of real numbers and then becoming
more general and abstract.  
Probability theory followed along, 
but since the meaning of \emph{measure} was changing, we can
easily misunderstand things said at the time about 
the relation between the two theories.

The development of theories of measure and integration during 
the late nineteenth and early twentieth centuries has been 
studied extensively (Hawkins 1975\nocite{hawkins:1975},
Pier 1994a\nocite{pier:1994a}).  Here we offer only 
a bare-bones sketch, beginning with Borel and Lebesgue (\S\ref{subsec:borleb})
and touching on
those steps that proved most significant for the foundations of 
probability.  We discuss the work of Carath\'eodory, Radon, Fr\'echet,
and Nikodym,
who made measure primary and the integral secondary (\S\ref{subsec:radsak}),
as well as the contrasting approach of Daniell, 
who took integration to be basic (\S\ref{subsec:danwien}).  

We dwell more on the relation of measure and integration to probability.
We discuss perceptions of the relevance of Fr\'echet's work to probability
(\S\ref{subsec:frechet}) before turning to Wiener's
theory of Brownian motion.  Then we discuss 
Borel's strong law of large numbers, which 
focused attention on measure rather than on integration
(\S\ref{subsec:entangle}).  After looking at
Steinhaus's axiomatization of Borel's denumerable probability
and its relation to the Polish
work on independent functions, we turn to Kolmogorov's use of measure theory
in probability in the 1920s.  Kolmogorov's work in probability began in
collaboration with Khinchin, who was using Steinhaus's picture to develop
limit theorems, but he quickly dropped Steinhaus's picture in 
favor of Fr\'echet's integral, which he brought to prominence in probability
theory with his 1931 article on Markov processes (\S\ref{subsec:kolenter}).

\subsection{The invention of measure theory by Borel and Lebesgue}
\label{subsec:borleb}

\'Emile Borel is usually considered the founder of measure theory.  
Whereas Peano and Jordan had extended the 
concept of length from intervals to a larger class of sets 
of real numbers by approximating the sets inside and out with finite unions of
intervals, Borel used countable unions.
His motivation came from complex analysis.
In his doctoral dissertation in 1894 (published in 1895\nocite{borel:1895}),
Borel studied certain series that were known to diverge
on a dense set of points on a closed curve
and hence, it was thought, could not be continued analytically
into the region bounded by the curve.
Roughly speaking, Borel discovered that the set of points
where divergence occurred, although dense, can be covered by a countable number of 
intervals with arbitrarily small total length.
Elsewhere on the curve---almost everywhere, 
we would say now---the series does converge, and so
analytic continuation is possible (Hawkins 1975\nocite{hawkins:1975}, \S4.2).
This discovery led Borel to a new theory of measurability
for subsets of $[0,1]$, which he published in 1898\nocite{borel:1898}.  

Borel's innovation was quickly seized upon by Henri Lebesgue,
who made it the basis for the powerful theory of 
integration that he first announced in 1901\nocite{lebesgue:1901}. 
We now speak of Lebesgue measure on the real numbers $R$ and on 
the $n$-dimensional space $R^n$, and of the Lebesgue integral 
in these spaces.

We need not review Lebesgue's theory, but we should 
mention one theorem, the precursor of the Radon-Nikodym 
theorem:  any countably additive and absolutely
continuous set function on the real numbers is an indefinite integral.  
This result first appeared in Lebesgue's 1904 book\nocite{lebesgue:1904} 
(Hawkins 1975\nocite{hawkins:1975}, p.~145; Pier 1994\nocite{pier:1994}, p.~524).
He generalized it to $R^n$ in 1910
(Hawkins 1975\nocite{hawkins:1975}, p.~186).

We should also mention a note published in 1918 by 
Wac{\l}aw Sierpi\'nski on the axiomatic treatment of Lebesgue measure.
In this note\nocite{sierpinski:1918},
important to us because of the use Hugo Steinhaus later made of it,
Sierpi\'nski characterized the class of Lebesgue measurable sets
as the smallest class $K$ of sets satisfying the following conditions:
\begin{enumerate}\renewcommand{\labelenumi}{\Roman{enumi}}
  \item
     For every set $E$ in $K$, there is a nonnegative 
     number $\mu(E)$ that will be its measure and will 
     satisfy conditions II, III, IV, and V.
  \item
     Every finite closed interval is in $K$ and has its length 
     as its measure.
  \item
     $K$ is closed under finite and countable unions of disjoint 
     elements, and $\mu$ is finitely and countably additive.
  \item
     If $E_1 \supset E_2$ and $E_1$ and $E_2$ are in $K$, then 
     $E_1 \setminus E_2$ is in $K$.
  \item
     If $E$ is in $K$ and $\mu(E) = 0$, then any subset of $E$ is in $K$.
\end{enumerate}
An arbitrary class $K$ satisfying these five conditions 
is not necessarily a field; there is no requirement that an intersection
of two of $K$'s elements also be in $K$.\label{p:field}%
\footnote{Recall that a \emph{field} of sets is a collection of sets 
          closed under relative complementation and finite union and 
          intersection.  A field of sets closed under denumerable
          union and intersection is a \emph{Borel} field.  A field that
          has a largest element is called an \emph{algebra}, and a Borel
          field that has a largest element is called 
          a \emph{$\sigma$-algebra}.
          Although \emph{algebra} and \emph{$\sigma$-algebra} are now
          predominant in probability theory, \emph{field} and \emph{Borel field}
          were more common in mathematical work before the second world war.}

\subsection{Abstract measure theory  from Radon to Saks}\label{subsec:radsak}

Abstract measure and integration began in the second decade of
the twentieth century and came into full flower in the fourth.

The first and most important step was taken by Johann Radon, in a 
celebrated article published in 1913\nocite{radon:1913}.
Radon unified
Lebesgue and Stieltjes integration by generalizing  
integration with respect to Lebesgue measure
to integration with respect to any countably additive 
set function (\emph{absolut additive Mengenfunktion})
on the Borel sets in $\bbbr^n$.  
The generalization included a version of the theorem
of Lebesgue's we just mentioned:  if a countably additive
set function $g$ on $R^n$ is absolutely continuous with respect 
to another countably additive set function $f$, then $g$ is
an indefinite integral with respect to $f$
(Hawkins 1975\nocite{hawkins:1975}, p.~189).

Constantin Carath\'eodory was also influential in drawing attention
to measures on Euclidean spaces other than Lebesgue measure.
In 1914\nocite{caratheodory:1914},
Carath\'eodory gave axioms for outer measure in a $q$-dimensional space,
derived the notion of measure,
and applied these ideas not only to Lebesgue measure
on Euclidean spaces but also to lower-dimensional measures on Euclidean space,
which assign lengths to curves, areas to surfaces, etc.\ \cite{hochkirchen:1999}.
Carath\'eodory also recast Lebesgue's theory of integration
to make measure even more fundamental;
in his 1918 textbook\nocite{caratheodory:1918} on real functions,
he defined the integral of a positive function on a subset of $\bbbr^n$
as the $(n+1)$-dimensional measure of the region
between the subset and the function's graph
(Bourbaki, 1994, p.~228\nocite{bourbaki:1994}).

It was Fr\'echet who first went beyond Euclidean space.  
In 1915\nocite{frechet:1915a,frechet:1915b}, Fr\'echet observed that much 
of Radon's reasoning does not depend on the assumption that
one is working in $\bbbr^n$.  One can reason in the same way in a much
larger space, such as a space of functions.
Any space will do, so long as the countably additive set function is defined
on a $\sigma$-field of its subsets, as Radon had required.
One thus arrives at the abstract theory
of integration on which Kolmogorov based probability.
As Kolmogorov put it in the preface to his 
\emph{Grundbegriffe},
\begin{quotation}
\noindent
  \dots After Lebesgue's investigations,
  the analogy between the measure of a set and the probability of 
  an event, as well as between the integral of a function and the mathematical
  expectation of a random variable, was clear.
  This analogy could be extended further; for example,
  many properties of independent random variables are
  completely analogous to corresponding properties of orthogonal functions.
  But in order to base probability theory on this analogy,
  one still needed to liberate the theory of measure and integration
  from the geometric elements still in the foreground with Lebesgue.
  This liberation was accomplished by Fr\'echet.
\end{quotation}
Fr\'echet did not, however, manage to generalize
Radon's theorem on absolute continuity to the fully
abstract framework.  This generalization, now called
the \emph{Radon-Nikodym theorem}, was obtained by 
Otton Nikodym in 1930\nocite{nikodym:1930}.  

It should not be inferred from Kolmogorov's words
that Fr\'echet used ``measure'' in the way we do today.  
In his 1915 articles and in his treatise on 
probability\nocite{frechet:1937/1938},
cited in the \emph{Grundbegriffe}
but not published until 1937--1938,
Fr\'echet used \emph{fonction additive d'ensembles} for what we now call a measure.
He makes this comment on p.~6 of the treatise:
\begin{quotation}
  We should note a tendency, in recent years,
  to model probability theory on measure theory.
  In fact, it is not the notion of the measure of a set,
  but rather the notion of an \emph{additive set function} that is appropriate,
  because of the theorem
  (or postulate) of total probability, for representing a probability,
  either continuous or discrete.
\end{quotation}
Kolmogorov was similarly old-fashioned in the text of the \emph{Grundbegriffe}, using
\emph{vollst\"andig additive Mengenfunktion}.  
Fr\'echet may have liberated the theory of measure and integration
from its geometric roots, but both Fr\'echet and Kolmogorov 
continued to reserve the word measure for geometric settings.
As Stanis{\l}aw Ulam explained in 1943, measure has two characteristic 
properties:  additivity for disjoint sets and equality of measure
for sets that are congruent or otherwise considered equivalent
(Ulam 1943\nocite{ulam:1943}, p.~601).
We do find early examples in which ``measure'' is used in reference to an 
additive set function on a set not necessarily endowed with a 
congruence or equivalence relation:  
Ulam himself in German (1930\nocite{ulam:1930}, 1932\nocite{ulam:1932})
and Eberhard Hopf in English (1934)\nocite{hopf:1934}.  
But the usage seems to have become standard only after the second world war.
Doob's example is instructive.  He began using 
``measure function'' in a completely abstract context 
in his articles in the late 
1930s\nocite{doob:1937,doob:1938,doob:1940a,doob:1940b,doob:1942,doob:1945,doob:1947,doob:1948}
and then abbreviated it, more and more often during the 1940s, 
to ``measure''.   The full phrase ``measure function'' still surfaces occasionally in 
Doob's 1953 book\nocite{doob:1953}, but by then the modern usage had been cemented
in place by his student Paul Halmos in \emph{Measure Theory}\nocite{halmos:1950},
published in 1950.

Nikodym's theorem was the beginning of a torrent of work on abstract
measure and integration in the 1930s.
We can gain some perspective on what happened by looking
at the two editions of Stanis{\l}aw Saks's textbook on integration.
The first\nocite{saks:1933}, which appeared in French almost simultaneously
with Kolmogorov's \emph{Grundbegriffe} (the preface is dated May 1933), 
discusses the Perron and Denjoy integrals as well as the
Lebesgue integral, but stays, throughout the eleven chapters of the
main text, within Euclidean space.  We find abstract integration only in
a fifteen-page appendix, entitled
``Int\'egrale de Lebesgue dans les espaces abstraits'', which
opens with a bow to Radon, Fr\'echet, the Radon-Nikodym theorem, and 
Ulam's 1932 announcement in 1932 concerning the construction of product measures\nocite{ulam:1932}.
In the second edition four years later, 
in 1937\nocite{saks:1937}, the abstract Lebesgue integral comes at the beginning,
as the topic of Chapter I, now with bows to Radon, Daniell, Nikodym, and Jessen.
There is again an appendix on the Lebesgue integral in abstract spaces,
but this one is written by Stefan Banach,
for whom an integral was a linear operator.

Banach's appendix was one of the early signs of a split between two schools of thought
concerning the integral,
which Bourbaki (1994\nocite{bourbaki:1994}, p.~228)
traces back to Carath\'eodory's 1918 textbook.
One school, following Carath\'eodory,
has made measure ever more abstract, axiomatized, and basic.  
The other, following Young, Daniell, and Banach,
takes integration as basic and puts more emphasis on the topological and geometric structures
that underlie most instances of integration.

Kolmogorov was a vigorous participant in the discussion of measure and integration
in the late 1920s and early 1930s.
In 1933, Saks cited three of Kolmogorov's articles,
including one in which Kolmogorov advanced a novel theory of integration
of his own (1930a)\nocite{kolmogorov:1930a}.
In 1937, Saks cited these same articles again
but took no notice of the \emph{Grundbegriffe}.

\subsection{Fr\'echet's integral}\label{subsec:frechet}

In an interview in 1984,
Fr\'echet's student Jean Ville recalled how Fr\'echet had wanted him to 
write a dissertation in analysis, not in probability.  In Paris in the 1930s, Ville
explained, probability was considered merely an honorable pastime for those who had
already distinguished themselves in pure mathematics (Cr\'epel 1984\nocite{crepel:1984}, p.~43).
Fr\'echet's own career, like that of his older colleagues Borel
and Castelnuovo, had followed this pattern.  His dissertation, 
completed in 1906 under Jacques Hadamard, can be said to have launched 
general topology.%
\footnote{The term ``general topology'' seems to have come into common use
          only after the second world war.
          Earlier names included ``point-set theory'', ``analysis situ'', and ``general analysis''.
          The topological spaces that interested Fr\'echet
          and his colleagues were spaces of functions,
          and the objects of greatest interest were real-valued functions on these spaces.
          Following a suggestion by Hadamard that has now been universally adopted, Fr\'echet
          called such functions ``fonctionnelles'',
          and he often called general topology ``le calcul fonctionnel'' (Taylor 1982, pp.~250--251).  
          Nowadays we speak of ``functional analysis'' or ``the theory of functions''.}
He continued to concentrate on general topology and linear functionals until 1928,
when, at the behest of Borel, he moved from Strasbourg to Paris
and turned his main attention to probability and statistics.
His stature as a mathematician assured him a leadership role.
In 1941, at the age of 63, he succeeded to Borel's chair
in Calculus of Probabilities and Mathematical Physics at the University of Paris.%
\footnote{In his 1956 autobiography\nocite{wiener:1956} (p.~50),
          Norbert Wiener recalled that in 1920 he would not have been surprised
          had Fr\'echet turned out ``to be the absolute leader
          of the mathematicians of his generation''.
          That things turned out differently was due, Wiener thought
          in hindsight, to the excessive abstract formalism of Fr\'echet's work.
          Others have pointed to the fact that Fr\'echet contributed, both
          in general topology and in probability, more definitions than theorems
          (Taylor 1982, 1985, 1987\nocite{taylor:1982,taylor:1985,taylor:1987}).
          
	  Borel and Hadamard first proposed Fr\'echet
          for the Acad\'emie des Sciences in 1934,
          but he was not elected until 1956, when he was 77.
          Harald Cram\'er included an uncharacteristically negative comment
          about Fr\'echet's work in probability and statistics
          in his own scientific memoir (Cram\'er 1976\nocite{cramer:1976}, p.~528).
          Cram\'er was a professor of actuarial science rather than mathematics,
          but he contributed more than Fr\'echet to mathematical statistics,
          and he may have been irritated that Fr\'echet had never invited him to Paris.}

When Fr\'echet generalized Radon's integral in 1915,
he was explicit about what he had in mind:
he wanted to integrate over function space.
In some sense, therefore, he was already thinking about probability.
An integral is a mean value.
In a Euclidean space this might be a mean value
with respect to a distribution of mass or electrical charge,
but we cannot distribute mass or charge over a space of functions.
The only thing we can imagine distributing over such a space
is probability or frequency.

Why did Fr\'echet fail at the time to elaborate his ideas on abstract integration,
connecting them explicitly with probability?
One reason was the war.  Mobilized on August 4, 1914, Fr\'echet was
at or near the front for about two and a half years, and he was still in uniform
in 1919.  Thirty years later, he still had the notes on abstract integration that 
he had prepared, in English, for the course he had planned to teach at the University
of Illinois in 1914--1915.  

We should also note that Fr\'echet was not always enthusiastic about
axioms.  In a lecture delivered in 1925
(Fr\'echet 1955\nocite{frechet:1955}, pp.~1--10), 
he argued that the best principles for purely
mathematical work are not necessarily best for practical science
and education.  He felt that too much emphasis had been put on axioms in 
geometry, mechanics, and probability; for some purposes these topics should
be de-axiomatized.

Regardless of Fr\'echet's opportunities and inclinations, 
the time was not yet ripe in 1915 for general theorizing
about probability in function space.  The problem, as the American
mathematician Theophil Hildebrandt pointed out in 1917\nocite{hildebrandt:1917} 
(p.~116), was the lack of interesting examples.
Fr\'echet's integral, he said,
\begin{quotation}
\noindent
\dots depends upon the existence, in a general class of elements, of an
absolutely additive function $v$ whose range is a class of subclasses of the
fundamental class, i.e., a function such that 
$$
    v(\Sigma_n E_n) = \Sigma_n v(E_n),
$$
the $E_n$ being mutually distinct and finite or denumerably infinite in number.
The examples of this which have been given for the general space are trivial
in that they reduce either to an infinite sum or an integral extended over a
field in a finite number of dimensions.  There is still lacking a really
effective and desirable absolutely additive function for the higher type
of spaces. \dots
\end{quotation}
In his presidential address to the American Mathematical Society on 
New Year's Day in 
1915\nocite{vanvleck:1915}, Edward Van Vleck had worried even about the 
examples that Hildebrandt dismissed as trivial.
According to Van Vleck, Poincar\'e, Borel, and 
Felix Bernstein had clarified the problem of mean motion by
showing that exceptional cases have only measure zero.  But 
\begin{quotation}
\noindent
   \dots care must be
   taken inasmuch as measure is not an invariant of analysis situ and hence may be 
   dependent on the parameters introduced.  This application of measure is 
   as yet prospective rather than actual. \dots (p.~337) 
\end{quotation}
The motions of classical physics are functions of time and hence
belong in function space, but the laws of classical physics are deterministic, and so we can 
put probability only on the initial conditions. 
In the examples under discussion, the initial conditions are finite-dimensional and can 
be parameterized in different ways.
The celebrated ergodic theorems of Birkhoff and von Neumann,
published in 1932, (Zund 2002)\nocite{zund:2002}, 
did provide a rationale for the choice of parameterization,
but they were still concerned
with Lebesgue measure on a finite-dimensional space of initial conditions.

The first nontrivial examples of probability in function space were provided 
by Daniell and Wiener.

\subsection{Daniell's integral and Wiener's differential space}\label{subsec:danwien}

Percy Daniell, an Englishman working at the Rice Institute 
in Houston, Texas,
introduced his integral in a series of 
articles in the \emph{Annals of Mathematics} from 1918 to  
1920\nocite{daniell:1918,daniell:1919a,daniell:1919b,daniell:1920}.  Although he
built on the work of Radon and Fr\'echet, his viewpoint owed more to 
earlier work by William~H. Young\nocite{young:1911,young:1915,hildebrandt:1917}.  

Like Fr\'echet, Daniell considered an abstract set $E$.  But instead of 
beginning with an additive set function on subsets of $E$, he began with 
what he called an integral on $E$---a linear operator on some class
$T_0$ of real-valued functions on $E$.  The class
$T_0$ might consist of all continuous functions
(if $E$ is endowed with a topology), or perhaps of all
step functions.  Applying Lebesgue's methods
in this general setting, Daniell extended the linear operator to a
wider class $T_1$ of functions on $E$, the summable functions.
In this way, the Riemann integral is extended to the
Lebesgue integral, the Stieltjes integral to the Radon integral,
and so on (Daniell 1918\nocite{daniell:1918}).  
Using ideas from Fr\'echet's dissertation, Daniell also
gave examples in infinite-dimensional spaces 
(Daniell 1919a,b\nocite{daniell:1919a,daniell:1919b}).

In a remarkable but unheralded 
1921 article\nocite{daniell:1921} in 
the \emph{American Journal of Mathematics}, Daniell used
his theory of integration to analyze 
the motion of a particle whose infinitesimal 
changes in position are independently and normally distributed.
Daniell said he was studying \emph{dynamic probability}.  We now 
speak of \emph{Brownian motion}, with reference to the botanist
Robert Brown, who described the erratic motion of pollen in 
the early nineteenth century (Brush 1968\nocite{brush:1968}).  Daniell cited
work in functional analysis by Vito Volterra and work in probability
by Poincar\'e and Pearson, but he appears to have been unaware
of the history of his problem, for he cited neither Brown, 
Poincar\'e's student Louis Bachelier 
(Bachelier 1900\nocite{bachelier:1900}, 
Courtault and Kabanov 2002\nocite{courtault/kabanov:2002}),
nor the physicists
Albert Einstein and Marian von Smoluchowski 
(Einstein 1905, 1906\nocite{einstein:1905,einstein:1906};
von Smoluchowski 1906\nocite{vonsmoluchowski:1906}).
In retrospect, we may say that Daniell's was the first rigorous
treatment, in terms of functional analysis, of the mathematical
model that had been studied less rigorously
by Bachelier, Einstein, and von Smoluchowski.
Daniell remained at the Rice Institute in Houston until 1924, when 
he returned to England to teach at Sheffield University.
Unaware of the earlier work on Brownian motion,
he seems to have made no effort to publicize his work on the topic,
and no one else seems to have taken notice of it until Stephen Stigler
spotted it in 1973 in the course of a systematic search for 
articles related to probability and statistics in the 
\emph{American Journal of Mathematics}
(Stigler 1973\nocite{stigler:1973}).

The American ex-child prodigy and polymath
Norbert Wiener, when he came upon Daniell's
1918 and July 1919 articles, was in a better position 
than Daniell himself to appreciate
and advertise their remarkable potential for probability
(Wiener 1956\nocite{wiener:1956}, Masani 1990\nocite{masani:1990},
Segal 1992\nocite{segal:1992}).  As a philosopher
(he had completed his Ph.D. in philosophy at Harvard before
studying with Bertrand Russell in Cambridge), Wiener was well
aware of the intellectual significance of Brownian motion
and of Einstein's mathematical model for it.
As a mathematician (his mathematical mentor was G.~H. 
Hardy at Cambridge), he knew the new functional analysis,
and his Cincinnati friend I. Alfred Barnett had
suggested that he use it to study Brownian motion
(Masani 1990, p.~77).

In November 1919, Wiener submitted his first article on Daniell's integral 
to the \emph{Annals of Mathematics}, the journal where Daniell's
four articles on it had appeared.  This article did not yet discuss
Brownian motion; it merely laid out a general method for setting
up a Daniell integral when
the underlying space $E$ is a function space.  But by August 1920, 
Wiener was in France to explain his ideas
on Brownian motion to Fr\'echet and L\'evy (Segal, 1992\nocite{segal:1992}, p.~397).
Fr\'echet, he later recalled,
did not appreciate its importance, but L\'evy
showed greater interest, once convinced that there was a difference
between Wiener's method of integration and G\^ateax's
(Wiener 1956, p.~64). 
Wiener followed up with a quick series of articles: 
a first exploration of
Brownian motion (1921a)\nocite{wiener:1921a}, 
an exploration of what later became known as the Ornstein-Uhlenbeck
model (1921b\nocite{wiener:1921b}, Doob 1966\nocite{doob:1966}),
a more thorough and 
later much celebrated article on Brownian motion (``Differential-Space'')
in 1923\nocite{wiener:1923}, and a final installment in 
1924\nocite{wiener:1924}.  

Because of his work on cybernetics after the second world war, 
Wiener is now the best known of the twentieth-century
mathematicians in our story---far
better known to the general intellectual public than Kolmogorov.
But he was not well known in the early 1920s, and though the 
most literary of mathematicians---he published fiction and social 
commentary---his mathematics was never easy to read, and the early articles attracted
hardly any immediate readers or followers.  Only after he became 
known for his work on Tauberian theorems in the later 1920s, and only after 
he returned to Brownian motion in collaboration with
C.~A.~B. Paley and Antoni Zygmund in the early 1930s
(Paley, Wiener, and Zygmund 1933\nocite{paley/wiener/zygmund:1933})
do we see the work recognized as central and seminal for the emerging theory
of continuous stochastic processes.  In 1934\nocite{levy:1934}, L\'evy 
finally demonstrated that he really understood what Wiener had done by publishing 
a generalization of Wiener's 1923 article.  

Wiener's basic idea was simple.
Suppose we want to formalize the notion of Brownian motion
for a finite time interval, say $0\le t \le 1$.
A realized path is a function on $[0,1]$.
We want to define mean values for certain functionals
(real-valued functions of the realized path).
To set up a Daniell integral that gives these mean values, 
Wiener took $T_0$ to consist of
functionals that depend only on the path's
values at a finite number of time points.  One can 
find the mean value of such a functional using 
Gaussian probabilities for the changes from each time point to the next.  
Extending this integral by Daniell's method, he succeeded in defining mean values 
for a wide class of functionals.
In particular, he obtained probabilities (mean values for indicator functions)
for certain sets of paths.
He showed that the set of continuous paths has probability one,
while the set of differentiable paths has probability zero.

It is now commonplace to translate this work into 
Kolmogorov's measure-theoretic framework.  
Kiyoshi It\^o, for example, in a commentary published 
along with Wiener's articles from this period in
Volume 1 of Wiener's collected works\nocite{wiener:1976}, 
writes as follows (p.~515) concerning 
Wiener's 1923 article\nocite{wiener:1923}:
\begin{quotation}
  Having investigated the differential space from various directions,
  Wiener defines the Wiener measure as a $\sigma$-additive probability 
  measure by means of Daniell's theory of integral.
\end{quotation}
It should not be thought, however, that Wiener defined 
a $\sigma$-additive probability measure and then found mean values as 
integrals with respect to that measure.  Rather, 
as we just explained, he started with mean values and 
used Daniell's theory to obtain more.
This Daniellian approach to probability, making mean
value basic and probability secondary, has
long taken a back seat to Kolmogorov's approach, but it still has its
supporters (Whittle 2000\nocite{whittle:2000}, 
Haberman 1996\nocite{haberman:1996}).

Today's students sometimes find it puzzling that Wiener could
provide a rigorous account of Brownian motion before Kolmogorov had
formulated his axioms---before probability itself had been made rigorous.
But Wiener did have a rigorous mathematical framework:  functional
analysis.  As Doob wrote in his commemoration of Wiener in 1966\nocite{doob:1966},
``He came into probability from analysis and made no concessions.''
Wiener set himself the task of using functional analysis to 
describe Brownian motion.  This meant using some method of integration
(he knew many, including Fr\'echet's and Daniell's)
to assign a ``mean'' or ``average value'' to certain functionals, and this 
included assigning ``a measure, a probability'' (Wiener 1924, p.~456)
to certain events.  It did not involve advancing an abstract
theory of probability, and it certainly did not involve divorcing the
idea of measure from geometry, which is deeply implicated
in Brownian motion.  From Wiener's point of view,
whether a particular number
merits being called a ``probability'' or a ``mean value'' in a particular context
is a practical question, not to be settled by abstract theory.

Abstract theory was never Wiener's predilection.
He preferred, as Irving Segal put it (1992, p.~422), 
``a concrete incision to an abstract envelopment''.
But we should mention the close relation between 
his work and L\'evy's general ideas about probability in abstract spaces.
In his 1920 paper on Daniell's integral\nocite{wiener:1920} 
(p.~66), Wiener pointed out that
we can use successive partitioning to set up such an integral in 
any space.  We finitely partition the 
space and assign probabilities (positive numbers adding to one)
to the elements of the partition,
then we further partition each element of the first partition, 
further distributing its probability, and so on.  
This is an algorithmic rather than an axiomatic picture, but it can
be taken as defining what should be meant by a probability measure
in an abstract space.  
L\'evy adopted this viewpoint in his 1925 book (p.~331), with due
acknowledgement to Wiener.  L\'evy later called a probability measure 
obtained by this process of successive partitioning a
\emph{true probability law} (1970\nocite{levy:1970}, pp.~65--66).

\subsection{Borel's denumerable probability}
\label{subsec:entangle}

Impressive as it was and still is, Wiener's work played little role in
the story leading to Kolmogorov's \emph{Grundbegriffe}.  The starring role 
was played instead by \'Emile Borel.

In retrospect, Borel's use of measure theory in complex analysis
in the 1890s already looks like probabilistic reasoning.
Especially striking in this respect is the argument Borel
gave in 1897 for his claim that a Taylor series will usually
diverge on the boundary of its circle of convergence\nocite{borel:1897}.
In general, he asserted, 
successive coefficients of the Taylor series, or at least successive
groups of coefficients, are independent.  He showed that each 
group of coefficients determines an arc on the circle,
that the sum of lengths of the arcs diverges, and 
that the Taylor series will diverge at a point on the circle
if it belongs to infinitely many of the arcs.  The arcs being 
independent, and the sum of their lengths being infinite, 
a given point must be in infinitely many of them.  To make 
sense of this argument, we must evidently take ``in general''
to mean that the coefficients are chosen at random and
``independent'' to mean probabilistically independent;
the conclusion then follows by what we now call the Borel-Cantelli Lemma.  
Borel himself used
probabilistic language when he reviewed this work in 1912\nocite{borel:1912} 
(Kahane 1994\nocite{kahane:1994}), and Steinhaus spelled 
the argument out in fully probabilistic terms in 1930\label{p.kreis}
(Steinhaus 1930a)\nocite{steinhaus:1930a}.
For Borel in the 1890s, however, complex analysis was not a domain for 
probability, which was concerned with events
in the real world.

In the new century, Borel did begin to explore the implications for probability 
of his and Lebesgue's work on measure and integration (Bru 2001\nocite{bru:2001}).  
His first comments came in an article in 1905\nocite{borel:1905}, where he
pointed out that the new theory justified
Poincar\'e's intuition that a point chosen at random from
a line segment would be incommensurable with probability one
and called attention to 
Anders Wiman's work on continued fractions 
(1900\nocite{wiman:1900}, 1901\nocite{wiman:1901}),
which had been inspired by
the question of the stability of planetary motions,
as an application of measure theory to probability.

Then, in 1909\nocite{borel:1909a}, Borel published a startling 
result---his strong law of large numbers (Borel 1909a).
This new result strengthened measure theory's connection
both with geometric probability and with the heart of 
classical probability theory---the concept of independent trials.
Considered as a statement in geometric probability, 
the law says that the fraction of ones in the
binary expansion of a real number chosen at random from 
$[0,1]$ converges to one-half with probability one.
Considered as a statement about independent trials
(we may use the language of coin tossing, though Borel did not), 
it says that the fraction of heads in a denumerable sequence of independent 
tosses of a fair coin converges to one-half with probability one.
Borel explained the geometric interpretation, and he asserted that
the result can be established using measure theory 
(\S I.8).  But he set measure theory aside for philosophical reasons
and provided an imperfect proof using denumerable 
versions of the rules of total and compound probability.
It was left to others, most immediately Faber (1910\nocite{faber:1910}) 
and Hausdorff (1914\nocite{hausdorff:1914}),
to give rigorous measure-theoretic proofs 
(Doob 1989\nocite{doob:1989}, 1994\nocite{doob:1994}; 
von Plato 1994\nocite{vonplato:1994}).

Borel's discomfort with a measure-theoretic treatment
can be attributed to his unwillingness to assume countable
additivity for probability
(Barone and Novikoff 1978\nocite{barone/novikoff:1978},
von Plato 1994\nocite{vonplato:1994}).  
He saw no logical absurdity in
a countably infinite number of zero probabilities adding to a
nonzero probability, and so instead of general appeals to 
countable additivity, he preferred arguments that derive probabilities 
as limits as the number of trials increases (1909a\nocite{borel:1909a}, \S I.4).  
Such arguments seemed to him stronger than formal appeals to 
countable additivity, for they exhibit the finitary pictures that are
idealized by the infinitary pictures.
But he saw even more fundamental problems in 
the idea that Lebesgue measure can model a random choice
(von Plato 1994\nocite{vonplato:1994}, pp.~36--56; Knobloch 2001\nocite{knobloch:2001}).
How can we choose a real number at random when most
real numbers are not even definable in any constructive sense?

Although Hausdorff did not hesitate to equate Lebesgue measure
with probability, his account of Borel's strong law, in 
his \emph{Grundz\"uge der Mengenlehre} in 1914 (pp.~419--421),
treated it as a theorem about real numbers:
the set of numbers in $[0,1]$ with binary expansions for which 
the proportion of ones converges to one-half has Lebesgue measure one.
But in 1916 and 1917\nocite{cantelli:1916a,cantelli:1916b,cantelli:1917}, 
Francesco Paolo Cantelli rediscovered the 
strong law (he neglected, in any case, to cite Borel)
and extended it to the more general result that the average of
bounded random variables will converge to their mean with arbitrarily
high probability.  Cantelli's work inspired
other authors to study the strong law and to sort out different concepts of 
probabilistic convergence.  

By the early 1920s, it seemed to some that there were two different 
versions of Borel's strong law---one concerned with real numbers and 
one concerned with probability.  
In 1923\nocite{steinhaus:1923}, Hugo Steinhaus proposed to clarify matters
by axiomatizing Borel's theory of denumerable probability along
the lines of Sierpi\'nski's axiomatization of Lebesgue measure.
Writing $A$ for the set of all infinite sequences of 
$\rho$s and $\eta$s ($\rho$ for ``rouge'' and $\eta$ for ``noir'';
now we are playing red or black rather than heads 
or tails), Steinhaus proposed the following axioms for 
a class $\KKK$ of subsets of $A$ and a real-valued function 
$\mu$ that gives probabilities for the elements of $\KKK$\label{p:stein}:
\begin{enumerate}\renewcommand{\labelenumi}{\Roman{enumi}}
  \item
     $\mu(E) \ge 0$ for all $E \in \KKK$.
  \item
     \begin{enumerate}\renewcommand{\labelenumii}{\arabic{enumii}}
       \item
          For any finite sequence $e$ of $\rho$s and $\eta$s, 
          the subset $E$ of $A$ consisting of all infinite sequences that begin with
          $e$ is in $\KKK$.  
       \item
          If two such sequences $e_1$ and $e_2$ differ in only one place,
          then $\mu(E_1) = \mu(E_2)$, where $E_1$ and $E_2$ are the corresponding
          sets.
       \item
          $\mu(A) =1$.
     \end{enumerate}
  \item
     $\KKK$ is closed under finite and countable unions of disjoint 
     elements, and $\mu$ is finitely and countably additive.
  \item
     If $E_1 \supset E_2$ and $E_1$ and $E_2$ are in $\KKK$, then 
     $E_1 \setminus E_2$ is in $\KKK$.
  \item
     If $E$ is in $\KKK$ and $\mu(E) = 0$, then any subset of $E$ is in $\KKK$.
\end{enumerate}
Sierpi\'nski's axioms for Lebesgue measure consisted of
I, III, IV, and V, together with an axiom that says that
the measure $\mu(J)$ of an interval $J$ is its length.
This last axiom being demonstrably equivalent to Steinhaus's axiom II,
Steinhaus concluded that the theory of probability for an infinite 
sequence of binary trials is isomorphic with the theory of Lebesgue measure.

In order to show that his axiom II is equivalent to setting the 
measures of intervals equal to their length, Steinhaus used the Rademacher
functions---the $n$th Rademacher function being the function that assigns 
a real number the value $1$ or $-1$ depending on whether the $n$th digit in
its dyadic expansion is $0$ or $1$.  He also used these functions,
which are independent random variables, in deriving Borel's strong law
and related results.  The work by Rademacher (1922)\nocite{rademacher:1922} 
and Steinhaus
marked the beginning of the Polish school of ``independent
functions'', which made important contributions 
to probability theory during the period between the wars 
(Holgate 1997\nocite{holgate:1997}).

Steinhaus cited Borel but not Cantelli.  The work of Borel and 
Cantelli was drawn together,
however, by the Russians, especially by Evgeny Slutsky in his wide-ranging
article in the Italian journal \emph{Metron} in 1925.  
Cantelli, it seems, was not 
aware of the extent of Borel's priority until he debated the matter
with Slutsky at the International Congress of Mathematicians at
Bologna in 1928 (Seneta 1992\nocite{seneta:1992}, Bru 2003a\nocite{bru:2003a}).

The name 
``strong law of large numbers'' was introduced by Khinchin
in 1928\nocite{khinchin:1928}.
Cantelli had used ``uniform'' instead of ``strong''.  
          The term ``law of large numbers'' had been introduced
          originally by Poisson (1837\nocite{poisson:1837})
          and had come to be used as 
          a name for Bernoulli's theorem (or for the conclusion, from 
          this theorem together with Cournot's principle, that the 
          frequency of an event will approximate its probability), although Poisson had
          thought he was naming a generalization 
          (Stigler 1986\nocite{stigler:1986}, p.~185).

\subsection{Kolmogorov enters the stage}\label{subsec:kolenter}

Although Steinhaus considered only binary trials in his 1923 article, 
his reference to Borel's more general concept of denumerable probability
pointed to generalizations.
We find such a generalization in Kolmogorov's first article on probability, 
co-authored by Khinchin (Khinchin and Kolmogorov 1925\nocite{khinchin/kolmogorov:1925}),
which showed that a series of discrete random variables $y_1 + y_2 + \cdots$
will converge with probability one when the series of means and the series of 
variances both converge.
The first section of the article, due to Khinchin,
spells out how to represent the random variables as functions on $[0,1]$:
divide the interval into segments with lengths equal to the 
probabilities for $y_1$'s possible values, then divide each of these 
segments into smaller segments with lengths proportional to the probabilities
for $y_2$'s possible values, and so on.
This, Khinchin notes with a nod to Rademacher and Steinhaus, reduces the problem 
to a problem about Lebesgue measure.  This reduction was
useful because the rules for working with Lebesgue measure were 
clear, while Borel's picture of denumerable probability remained murky.

Dissatisfaction with this detour into Lebesgue 
measure must have been one impetus for the \emph{Grundbegriffe}
(Doob 1989\nocite{doob:1989}, p.~818).  
Kolmogorov made no such detour in his next article on
the convergence of sums of independent random variables.  In this sole-authored
article, dated 24 December 1926 and published in 1928\nocite{kolmogorov:1928}, 
he took probabilities and expected values as his 
starting point.  But even then, he did not appeal to Fr\'echet's 
countably additive calculus.
Instead, he worked with finite additivity and then stated an explicit 
ad hoc definition when he passed to a limit.  For example, he 
defined the probability $P$ that the series $\sum_{n=1}^{\infty} y_n$
converges by the equation
$$
  P
  =
  \lim_{\eta \to 0} \lim_{n \to \infty} \lim_{N \to \infty}
  \mathfrak{W} \left[ \mathrm{Max} \bigg\vert \sum_{k=n}^p y_k \bigg\vert_{p=n}^N < \eta \right],
$$
where $\mathfrak{W}(E)$ denotes the probability of the event $E$.
(This formula does not appear in the Russian and English translations 
provided in Kolmogorov's collected works\nocite{kolmogorov:1986,kolmogorov:1992}; 
there the argument has been modernized so as to eliminate it.)  
This recalls the way Borel proceeded in 1909: think through each passage to the limit.

It was in his seminal article on Markov processes
(\"Uber die analytischen Methoden
in der Wahrscheinlichkeitsrechnung\nocite{kolmogorov:1931}, 
dated 26 July 1930 and published in 1931)
that Kolmogorov first explicitly and freely used Fr\'echet's calculus
as his framework for probability.
In this article, Kolmogorov considered a system with a set of states $\mathfrak{A}$.
For any two time points $t_1$ and $t_2$ ($t_1 < t_2$), any state $x\in\mathfrak{A}$,
and any element $\mathfrak{E}$ in a collection $\mathfrak{F}$ of subsets of $\mathfrak{A}$,
he wrote
\begin{equation}\label{eq:trans}
   P(t_1,x,t_2,\mathfrak{E})
\end{equation}
for the probability, when the system is in state $x$ at time $t_1$,
that it will be in a state in $\mathfrak{E}$ at time $t_2$.
Citing Fr\'echet, Kolmogorov assumed that 
$P$ is countably additive as a function of $\mathfrak{E}$ and that
$\mathfrak{F}$ is closed under differences and countable unions and contains the empty set,
all singletons, and $\mathfrak{A}$.
But the focus was not on Fr\'echet; it was on the equation that ties together
the transition probabilities~(\ref{eq:trans}), now called the Chapman-Kolmogorov
equation.  The article launched the study of this equation by purely analytical methods, 
a study that kept probabilists occupied for fifty years.  

As many commentators have noted, the 1931 article makes no reference to probabilities 
for trajectories.  There is no suggestion that 
such probabilities are needed in order for a stochastic process to be well defined.
Consistent transition probabilities, it seems, are enough.  Bachelier 
(1900\nocite{bachelier:1900}, 1910\nocite{bachelier:1910}, 1912\nocite{bachelier:1912})
is cited as the first to study continuous-time stochastic processes, 
but Wiener is not cited.  We are left wondering when Kolmogorov first became aware of Wiener's 
success in formulating probability statements concerning Brownian trajectories.
He certainly was aware of it in 1934, when he reviewed L\'evy's generalization
of Wiener's 1923 article in the \emph{Zentralblatt}\nocite{kolmogorov:1934b}.  
He then called that article well known, 
but so far as we are aware, he had not mentioned it in print earlier.

\section{Hilbert's sixth problem}
\label{sec:hilbert}

At the beginning of the twentieth century,
many mathematicians were dissatisfied with what they saw
as a lack of clarity and rigor in the probability 
calculus.  The whole calculus seemed to be concerned with 
concepts that lie outside mathematics:  event, trial, 
randomness, probability.
As Henri Poincar\'e wrote,
``one can hardly give a satisfactory definition of probability''
(1912\nocite{poincare:1912}, p.~24).

The most celebrated call for clarification came from David Hilbert.
The sixth of the twenty-three
open problems that Hilbert presented to the International
Congress of Mathematicians in Paris in 1900 was to treat axiomatically,
after the model of geometry,
those parts of physics in which mathematics already played an outstanding role,
especially probability and mechanics (Hilbert 1902\nocite{hilbert:1902}).  
To explain what he meant by 
axioms for probability, Hilbert cited Georg Bohlmann,
who had labeled the rules of total and compound probability 
axioms rather than theorems in his
lectures on the mathematics of life insurance (Bohlmann 1901\nocite{bohlmann:1901}).
In addition to a logical investigation of these axioms,
Hilbert called for a ``rigorous and satisfactory development
of the method of average values in mathematical physics, especially
in the kinetic theory of gases''.

Hilbert's call for a mathematical treatment of 
average values was answered in part by the work on integration 
that we discussed in the preceding
section, but his suggestion that the classical rules for
probability should be treated as axioms on the model of geometry 
was an additional challenge.
Among the early responses, we may mention the following:
\begin{itemize}
  \item
     In his dissertation, in 1904\nocite{laemmel:1904} in Z\"urich,
     Rudolf Laemmel discussed the rules of total and compound probability as axioms.
     But he stated the rule of compound probability only in the case of independence,
     a concept he did not explicate.  
     Schneider reprinted excerpts from this dissertation in 
     1988\nocite{schneider:1988} (pp.~359--366).
  \item
     In a 1907 dissertation\nocite{broggi:1907} in G\"ottingen, directed by Hilbert himself, 
     Ugo Broggi gave only two axioms:  an axiom stating that the sure event 
     has probability one, and the rule of total probability. 
     Following tradition, he then defined probability as a ratio 
     (a ratio of numbers of cases in the discrete setting; a ratio of 
     the Lebesgue measure of two sets in the geometric setting) and verified his axioms.
     He did not state an axiom corresponding to the classical rule
     of compound probability.
     Instead, he gave this name to a rule for calculating 
     the probability of a Cartesian product, which he derived from 
     the definition of geometric probability in terms of Lebesgue measure. 
     Again, see Schneider\nocite{schneider:1988} (pp.~367--377) for excerpts.
     Broggi mistakenly claimed that his axiom of total probability 
     (finite additivity) implied countable additivity; see Steinhaus 1923\nocite{steinhaus:1923}.
  \item
     In an article written in 1920, published in 1923\nocite{lomnicki:1923},
     and listed in the bibliography of the \emph{Grundbegriffe}, 
     Antoni {\L}omnicki proposed that
     probability should always be understood relative to a density $\phi$
     on a set $\mathcal{M}$ in $\bbbr^r$.
     {\L}omnicki defined this probability by combining two of Carath\'eodory's ideas:
     the idea of $p$-dimensional measure
     and the idea of defining the integral of a function on a set
     as the measure of the region between the set and the function's graph
     (see \S\ref{subsec:borleb} above). 
     The probability of a subset $m$ of $\mathcal{M}$, according to {\L}omnicki,
     is the ratio of the measure of the region between $m$ and $\phi$'s graph  
     to the measure of the region between $\mathcal{M}$ and this graph.
     If $\mathcal{M}$ is an $r$-dimensional subset of $\bbbr^r$, then the 
     measure being used is Lebesgue measure on $\bbbr^{r+1}$; if $\mathcal{M}$ 
     is a lower-dimensional subset of $\bbbr^r$, say $p$-dimensional,
     then the measure is the $(p+1)$-dimensional Carath\'eodory measure.
     This definition covers discrete as well as continuous probability;
     in the discrete case, $\mathcal{M}$ is a set of discrete points,
     the function $\phi$ assigns each point its probability, and the
     region between a subset $m$ and the graph of $\phi$ consists of a line
     segment for each point in $m$, whose Carath\'eodory measure is its
     length---i.e., the point's probability.
     The rule of total probability follows.  Like Broggi,
     {\L}omnicki treated the rule of compound probability as a rule 
     for relating probabilities on a Cartesian product to probabilities
     on its components.  He did not consider it an axiom, because it 
     holds only if the density itself is a product density.
\end{itemize}
Two general tendencies are notable here:  
an increasing emphasis on measure,
and an attendant decline in the role of compound probability.
These tendencies are also apparent in the 1923 article by 
Steinhaus that we have already discussed.  
Steinhaus did not mention compound probability.

We now discuss at greater length responses by Bernstein,
von Mises, Slutsky, Kolmogorov, and Cantelli.

\subsection{Bernstein's qualitative axioms}

In an article published in Russian in 1917\nocite{bernstein:1917}
and listed by Kolmogorov in the \emph{Grundbegriffe}'s bibliography, 
Sergei Bernstein showed that probability theory can be
founded on qualitative axioms for 
numerical coefficients that measure the probabilities of 
propositions.  

Bernstein's two most important axioms correspond
to the classical axioms of total and compound probability:
\begin{itemize}
  \item
    If $A$ and $A_1$ are equally likely, $B$ and $B_1$ are
    equally likely, $A$ and $B$ are incompatible, and $A_1$ and $B_1$
    are incompatible, then $(A \mbox{ or } B)$ and 
    $(A_1 \mbox{ or } B_1)$ are equally likely.
  \item
    If $A$ occurs, the new probability of 
    a particular occurrence $\alpha$ of $A$ is a function of 
    the initial probabilities of $\alpha$ and $A$.
\end{itemize}
Using the first axiom, Bernstein concluded that if $A$ is the
disjunction of $m$ out of $n$ equally likely and incompatible 
propositions, and $B$ is as well, then $A$ and $B$ must be 
equally likely.  It follows that the numerical probability of $A$ and $B$
is some function of the ratio $m/n$, and we may as well take 
that function to be the identity.  Using the second axiom, 
Bernstein then finds that the new probability of $\alpha$ when
$A$ occurs is the ratio of the initial probability of $\alpha$
to that of $A$.  

Bernstein also axiomatized the field of propositions
and extended his theory to the case where this field
is infinite.  
He exposited his qualitative axioms again in a probability textbook that he 
published in 1927\nocite{bernstein:1927}, but neither the article nor
the book were ever translated out of Russian into other languages.
John Maynard Keynes included
Bernstein's article in the bibliography of the 1921 book\nocite{keynes:1921}
where he developed his own system of qualitative probability.  
Subsequent writers on qualitative probability, most prominently
Bernard~O. Koopman in 1940\nocite{koopman:1940} and 
Richard~T. Cox in 1946\nocite{cox:1946},
acknowledged a debt to Keynes but not to Bernstein.
The first summary of Bernstein's ideas in English appeared only in 
1974, when Samuel Kotz published an English translation of 
Leonid~E. Maistrov's history of probability\nocite{maistrov:1974}.  

Unlike von Mises and Kolmogorov, Bernstein was not a frequentist.
He had earned his doctorate in Paris, and the philosophical 
views he expresses at the end of his 1917 article are in line
with those of Borel and L\'evy:  probability is essentially 
subjective and becomes objective only when there is sufficient
consensus or adequate confirmation of Cournot's principle.

\subsection{Von Mises's Kollektivs}\label{subsec:vonmises}
 
The concept of a Kollektiv was introduced into the German scientific
literature by Gustav Fechner in the 1870s and popularized in
his posthumous \emph{Kollektivmasslehre}, 
which appeared in 1897\nocite{fechner:1897} (Sheynin 2004)\nocite{sheynin:2004}.  The concept 
was quickly taken up by Georg Helm (1902\nocite{helm:1902})
and Heinrich Bruns (1906\nocite{bruns:1906}).

Fechner wrote about the concept of a Kollektiv-Gegenstand (collective object)
or a Kollektiv-Reihe (collective series).
It was only later,
in Meinong (1915\nocite{meinong:1915}) for example,
that we see these names abbreviated to Kollektiv.
As the name Kollektiv-Reihe indicates,
a Kollektiv is a population of individuals given in a certain order;
Fechner called the ordering the Urliste.
It was supposed to be irregular---random, we would say.
Fechner was a practical scientist,
not concerned with the theoretical notion of probability.
But as Helm and Bruns realized,
probability theory provides a framework for studying Kollektivs.

Richard von Mises was a well established applied mathematician when he
took up the concept of a Kollektiv in 1919\nocite{vonmises:1919,vonmises:1928,vonmises:1931}.
His contribution was to realize that the concept can be made
into a mathematical foundation for probability theory.  
As von Mises defined it, a Kollektiv is an infinite sequence of
outcomes satisfying two axioms:
\begin{enumerate}
  \item
     the relative frequency of each outcome converges 
     to a real number (the probability of the outcome) as 
     we look at longer and longer initial segments of the sequence, and
  \item
     the relative frequency converges to the same probability
     in any subsequence selected without knowledge of the future 
     (we may use knowledge of the outcomes so far in deciding whether 
     to include the next outcome in the subsequence).
\end{enumerate}
The second property says we cannot change the odds by selecting 
a subsequence of trials on which to bet; this is von Mises's version of
the ``hypothesis of the impossibility of a gambling system'', 
and it assures the irregularity of the Urliste.  

According to von Mises, the purpose of the probability calculus is to 
identify situations where Kollektivs exist and the probabilities
in them are known, and to derive from probabilities for other 
Kollektivs from these given probabilities.
He pointed to three domains where probabilities for Kollektivs
are know:
\begin{enumerate}
   \item
     Games of chance, where devices are carefully constructed
     so the axioms will be satisfied.
   \item
     Statistical phenomena, where the two axioms can sometimes be
     confirmed, to a reasonable degree.
   \item
     Theoretical physics, where the two axioms play the same hypothetical
     role as other theoretical assumptions.
\end{enumerate}
(See von Mises 1931\nocite{vonmises:1931}, pp.~25--27.)

Von Mises derived the classical rules of probability, such as 
the rules for adding and multiplying probabilities, from
rules for constructing new Kollektivs out of an initial one.
He had several laws of
large numbers.  The simplest was his definition of probability:
the probability of an event is the event's limiting frequency in a Kollektiv.
Others arose as one constructed further Kollektivs.

Von Mises's ideas were taken up by a number of mathematicians in the 1920s and 1930s.
Kolmogorov's bibliography includes an article
by Arthur Copeland (1932\nocite{copeland:1932},
based on his 1928\nocite{copeland:1928} article)
that proposed founding probability theory
on particular rules for selecting subsequences in von Mises's scheme, 
as well as articles by Karl D\"orge (1930\nocite{dorge:1930}), 
Hans Reichenbach (1932\nocite{reichenbach:1932}), and 
Erhard Tornier (1933\nocite{tornier:1933}) arguing for alternative schemes.
But the most prominent mathematicians of the time,
including the G\"ottingen mathematicians (MacLane 1995\nocite{maclane:1995}), 
the French probabilists, and the British statisticians,
were hostile or indifferent. 

After the publication of the \emph{Grundbegriffe}, 
Kollektivs were given a rigorous mathematical basis by 
Abraham Wald (1936, 1937, 1938)\nocite{wald:1936,wald:1937,wald:1938}
and Alonzo Church (1940)\nocite{church:1940}, 
but the claim that they provide a foundation for probability was refuted by Jean Ville
(1936\nocite{ville:1936}, 1939\nocite{ville:1939}).
Ville pointed out that whereas a Kollektiv
in von Mises's sense will not be vulnerable to a gambling system
that chooses a subsequence of trials on which to bet,
it may still be vulnerable to a more clever gambling system,
which also varies the amount of the bet and the outcome on which to bet.
Ville called such a system a ``martingale''.
Ville's work inspired Doob's measure-theoretic martingales.
Our own game-theoretic foundation for probability
(Shafer and Vovk 2001)\nocite{shafer/vovk:2001}
combines Ville's idea of a martingale with ideas about 
generalizing probability that go back to 
Robert Fortet (1951, pp.~46--47)\nocite{fortet:1951,bayer:1951}.

\subsection{Slutsky's calculus of valences}

In an article published in Russian in 1922\nocite{slutsky:1922},
Evgeny Slutsky presented a viewpoint that greatly influenced
Kolmogorov.  As Kolmogorov said on the occasion of Slutsky's death in 1948, 
Slutsky was ``the first to give the right picture 
of the purely mathematical content of probability theory''\nocite{kolmogorov:1948a}.

How do we make probability theory 
purely mathematical?  Markov had claimed to do this in his textbook,
but Slutsky did not think Markov had succeeded, for 
Markov had retained the subjective notion of
equipossibility, with all its subjectivity.  
The solution, Slutsky felt, was to remove both the word
``probability'' and the notion of equally likely cases 
from the theory.  Instead of beginning with equally likely cases, 
one should begin by assuming merely that numbers are assigned to 
cases, and that when a case assigned the number $\alpha$ is further subdivided, the numbers
assigned to the subcases should add to $\alpha$.  
The numbers assigned to cases might be equal, or they 
might not.  The addition and multiplication theorems would be
theorems in this abstract calculus.  But it should not be called the
probability calculus.  In place of ``probability'', he suggested the 
unfamiliar word валентность,
or ``valence''.  (He may have been following Laemmel, who had used the
German ``valenz'', which can be translated into English as ``weight''.)
Probability would be only one interpretation of the
calculus of valences, a calculus fully as abstract as group theory.

Slutsky listed three distinct interpretations of the calculus of valences:
\begin{enumerate}
  \item
     Classical probability (equally likely cases).
  \item
     Finite empirical sequences (frequencies).
  \item
     Limits of relative frequencies. (Slutsky remarks that this 
     interpretation is particularly popular with the English school.)
\end{enumerate}
Slutsky did not think probability could be reduced to limiting frequency,
because sequences of independent trials have properties that go beyond
their possessing limiting frequencies.
He gave two examples.
First, initial segments of the sequences have properties that are not imposed
by the eventual convergence of the relative frequency.
Second, the sequences must be irregular in a way
that resists the kind of selection discussed by von Mises
(he cites an author named Умов,
not von Mises).

In his 1925 article on limit theorems\nocite{slutsky:1925}, 
written in German,
Slutsky returned briefly to his calculus of valences, now saying
that it should follow the lines laid out by Bernstein and that it should 
be called \emph{Disjunktionsrechnung}, a name  
Ladislaus von Bortkiewicz had suggested to him in a letter.  
\emph{Disjunktionsrechnung} was to be distinguished from \emph{Stochastik},
the science concerned with the application of \emph{Disjunktionsrechnung}
to random phenomena.  

The word ``stochastic'' was first associated with probability 
by Jacob Bernoulli.  Writing in Latin (1713, p.~213), 
Bernoulli called the art of combining arguments and data 
``\emph{Ars Conjectandi} sive \emph{Stochastice}''---the art of conjecturing or guessing.
Greek in origin, ``stochastic''
may have seemed as erudite and mysterious to Bernoulli as it does to us.
It was not used widely until it was revived and promoted
by von Bortkiewicz in 1917\nocite{vonbortkiewicz:1917}.%
\footnote{On p.~3, von Bortkiewicz wrote:
  \begin{quotation}
   The consideration of empirical quantities, oriented to probability
   theory but based on the ``law of large numbers'', may be called 
   S\hspace{.04cm}t\hspace{.04cm}o\hspace{.04cm}c\hspace{.04cm}h\hspace{.04cm}a\hspace{.04cm}s\hspace{.04cm}t\hspace{.04cm}i\hspace{.04cm}k
   ($\sigma\tau o \chi\acute{\alpha}\zeta\epsilon\sigma\theta\alpha\iota$ =
   aim, conjecture).
   Stochastik is not merely probability theory, but rather 
   probability theory in its application \dots  ``Stochastik''
   also signified applied probability theory for Bernoulli; only he
   represented an antiquated viewpoint with respect to the applications
   considered.   
  \end{quotation}
   For one appearance of ``stochastic'' between Bernoulli 
   and von Bortkiewicz, see
   Prevost and Lhuilier (1799)\nocite{prevost/lhuilier:1799}.}
As von Bortkiewicz's saw it, the probability calculus is 
mathematics, and \emph{Stochastik} is the science
of applying it to the real world.
The term was adopted, with acknowledgement to von
Bortkiewicz, by Chuprov in 1922\nocite{chuprov:1922} and 
Du Pasquier in 1926\nocite{dupasquier:1926}.
Slutsky entitled his 1925 article 
``\"Uber stochastische Asymptoten und Grenzwerte'', 
stochastic limits being the limiting frequencies
and averages we extract from statistical data.
In 1923\nocite{mordukh:1923}, Chuprov's student Jacob Mordukh 
called exchangeability ``stochastic commutativity''.
In 1934\nocite{khinchin:1934}, Khinchin gave 
``stochastic process'' the meaning it has today.%
\footnote{In 1931\nocite{kolmogorov:1931},
          Kolmogorov had called what we now call 
          a ``Markov process'' a ``stochastically definite process''
          and had used ``stochastic process'' 
          casually.  In addition to fixing the precise
          meaning of ``stochastic process'', Khinchin's
          1934 article introduced ``Markov process''
          (Dynkin (1989\nocite{dynkin:1989}).  
          ``Markov chain'' was already in use
          in the 1920s (Bru 2003a\nocite{bru:2003a}).}
The prestige of the Moscow school assured the international
adoption of the term, but von Bortkiewicz's gloss on ``stochastic'' was lost.
In Stalin's time, the Russians were not interested in suggesting that 
the applications of probability require a separate science.
So stochastic became merely a mysterious but international synonym for
random, al\'eatoire, zuf\"allig, casuale, and случайный.

\subsection{Kolmogorov's general theory of measure}
\label{subsec:kllmm29}

As we have seen, Kolmogorov had used Fr\'echet's integral hesitantly 
in his article on the convergence of sums of random variables in 1926,
but forthrightly in his article on Markov processes in 1930\nocite{kolmogorov:1931}.
Between these two mathematical contributions, 
he wrote a thought piece about how probability 
should fit into a broader abstract theory.
Dated 8 January 1927 and published in 1929\nocite{kolmogorov:1929},
this piece was neither a survey (it did not cite Fr\'echet, Steinhaus, or Slutsky)
nor a report on research, and it included no proofs.
It was philosophical---a sketch of a program.

By all appearances, the piece was inspired by Slutsky.
But whereas Slutsky was a statistician,
Kolmogorov was already established as a mathematician's mathematician.
In 1922, at the age of 19, Kolmogorov had constructed an integrable function
whose Fourier series diverges almost everywhere (1923\nocite{kolmogorov:1923}).  
In 1925 he had published two notes on the theory of integration
in the \emph{Comptes rendus}\nocite{kolmogorov:1925a,kolmogorov:1925b}.
Slutsky had mentioned frequencies as an alternative
interpretation of a general calculus.  Kolmogorov pointed to
more mathematical examples:  the distribution of digits in the 
decimal expansions of irrationals, Lebesgue measure in an 
$n$-dimensional cube, and the density of a set $A$ of positive integers
(the limit as $n\to\infty$ of the fraction of the integers between $1$ and $n$
that are in $A$).  

The abstract theory Kolmogorov sketches is concerned with a function $M$ that assigns
a nonnegative number $M(E)$
to each element $E$ of class of subsets of a set $A$.
He calls $M(E)$ the measure
(мера)
of $E$,
and he calls $M$ a measure specification
(мероопределение).
So as to accommodate all the mathematical examples he has in mind, 
he assumes, in general, neither that $M$ is countably additive
nor that the class of subsets to which it assigns numbers is a field. 
Instead, he assumes only that when $E_1$ and $E_2$ are disjoint
and $M$ assigns a number to two of the three sets $E_1$, $E_2$, and $E_1 \cup E_2$,
it also assigns a number to the third, and that 
$$
   M(E_1 \cup E_2) = M(E_1) + M(E_2)
$$
then holds (cf.\ Steinhaus's Axioms III and IV).
In the case of probability, however, he does suggest (using different 
words) that $M$ should be countably additive and that 
class of subsets to which it assigns numbers should be a field,
for only then can we uniquely define probabilities for countable
unions and intersections, and this seems necessary to justify 
arguments involving events such as the convergence of random variables.

He defines the abstract Lebesgue integral
of a function $f$ on $A$, and
he comments that countable additivity is to be assumed whenever such
an integral is discussed.  He writes $M_{E_1}(E_2) = M(E_1 E_2)/M(E_1)$
``by analogy with the usual concept of relative probability''.
He defines independence for partitions, and he comments, no doubt in 
reference to Borel's strong law and other results in number theory,
that the notion of independence is responsible for the power of probabilistic methods within 
pure mathematics.

The mathematical core of the~\emph{Grundbegriffe} is already here.
Many years later, in his commentary in Volume II of his collected works
(p.~520 of the English edition),
Kolmogorov said that only the set-theoretic treatment of conditional probability
and the theory of distributions in infinite products was missing.
Also missing, though, is the bold rhetorical move that Kolmogorov made
in the \emph{Grundbegriffe}---giving the abstract theory the name probability.

\subsection{The axioms of Steinhaus and Ulam}\label{subsec:steinulam}

In the 1920s and 1930s, the city of Lw\'ow in Poland%
\footnote{Though it was in Poland between the two world wars,
          this city is now in Ukraine.
          Its name is spelled differently in different languages:
          Lw\'ow in Polish, Lviv in Ukrainian, and Lvov in Russian.
          When part of Austria-Hungary and, briefly, Germany, it was called Lemberg.
          Some articles in our bibliography refer to it as L\'eopol.}
was a vigorous center of mathematical research, led by Hugo Steinhaus.
In 1929 and 1930, Steinhaus's work on limit theorems intersected with Kolmogorov's, 
and his approach promoted the idea that probability should be axiomatized
in the style of measure theory.

As we saw in \S\ref{subsec:entangle}, Steinhaus had already, in 1923,
formulated axioms for heads and tails isomorphic to Sierpi\'nski's axioms for Lebesgue measure.
This isomorphism had more than a philosophical purpose;
Steinhaus used it to prove Borel's strong law.
In a pair of articles written in 1929 and published in 1930
(Steinhaus 1930a,b\nocite{steinhaus:1930a,steinhaus:1930b}),
Steinhaus extended his approach to limit theorems
involving an infinite sequence of independent draws $\theta_1,\theta_2,\dots$
from the interval $[0,1]$.
Here are the axioms he gave
(Steinhaus 1930b, pp.~23--24; we translate loosely from the French):
\begin{enumerate}
  \item
     Probability is a nonnegative finite number.
  \item
     The probability that $\theta_i \in \Theta_i$ for $i=1,\dots,n$,
     where the $\Theta_i$ are measurable subsets of $[0,1]$, is
     $$
       \left|\Theta_1\right| \cdot \left|\Theta_2\right| \cdots \left|\Theta_n\right|,
     $$
     where $\left|\Theta_i\right|$ is the Lebesgue measure of $\Theta_i$. 
  \item
     If $E_1,E_2,\dots$ is a sequence of disjoint subsets of $[0,1]^{\infty}$,
     and for every $k$ the probability $\mu(E_k)$ for the sequence $\theta_1,\theta_2,\dots$
     being in $E_k$ is determined, then the probability $\mu(\cup_{k=1}^{\infty} E_k)$
     is determined and is equal to $\sum_{k=1}^{\infty} \mu(E_k)$.  
  \item
     If $E_1 \supset E_2$ and $\mu(E_1)$ and $\mu(E_2)$ are determined,
     then $\mu(E_1 \setminus E_2)$ is determined.  
  \item
     If $\mu(E_1) = 0$ and $E_1 \supset E_2$, then $\mu(E_2)$ is determined.
  \item
     If $\mathfrak{K}^*$ is a class of $E$ such that it is possible to define 
     $\mu(E)$ for all the $E$ in $\mathfrak{K}$ while respecting postulates 1--5,
     and if $\mathfrak{K}$ is the intersection of all the $\mathfrak{K}^*$
     with this property, then $\mu(E)$ is defined only for $E$ in $\mathfrak{K}$.
\end{enumerate}
Except for the second one, these axioms are identical with Steinhaus's axioms 
for heads and tails (see p.~\pageref{p:stein}).  
In the case of heads and tails, the second  
axiom specified probabilities for each initial finite sequence
of heads and tails.  Here it specifies probabilities for 
$\theta_1,\theta_2,\dots,\theta_n$.

Steinhaus presented his axioms as a ``logical extrapolation''
of the classical axioms to the case of an infinite number of trials
(Steinhaus 1930b, p.~23).
They were more or less tacitly used, he asserted,
in all classical problems, such as the problem of the gambler's ruin,
where the game as a whole---not merely finitely many rounds---must 
be considered (Steinhaus 1930a, p.~409).  

As in the case of heads and tails, Steinhaus showed that there
are probabilities uniquely satisfying his axioms by setting up an isomorphism with 
Lebesgue measure on $[0,1]$, this time using a sort of Peano curve to map 
$[0,1]^{\infty}$ onto $[0,1]$.
He used the isomorphism to prove several limit theorems,
including one formalizing Borel's 1897 claim
concerning the circle of convergence of a Taylor's series
with randomly chosen coefficients.

Steinhaus's axioms were measure-theoretic, but they were not yet abstract.  
His words suggested that his ideas should apply to all sequences of random variables,
not merely ones uniformly distributed,
and he even considered the case where the variables were complex-valued rather than real-valued,
but he did not step outside the geometric context to consider probability on abstract spaces.
This step was taken by Stanis{\l}aw Ulam, one of Steinhaus's junior colleagues at Lw\'ow.
At the International Congress of Mathematicians
in Z\"urich in 1932, Ulam announced that he and another Lw\'ow mathematician,
Zbigniew {\L}omnicki (a nephew of Antoni {\L}omnicki), 
had shown that product measures can be constructed in
abstract spaces (Ulam 1932\nocite{ulam:1932}).

Ulam and {\L}omnicki's axioms for measure were simple.
They assumed that the class of $\mathfrak{M}$ of measurable sets on space $X$
satisfy four conditions:
\begin{enumerate}\renewcommand{\labelenumi}{\Roman{enumi}.}
   \item
     $X\in \mathfrak{M}$, and $\{x\}\in\mathfrak{M}$ for all $x\in \mathfrak{M}$.
   \item
     If $M_i \in \mathfrak{M}$ for $i=1,2,\dots$, then $\cup_{i=1}^{\infty} M_i \in \mathfrak{M}$.
   \item
     If $M,N \in \mathfrak{M}$, then $M \setminus N \in \mathfrak{M}$.
   \item
     If $M \in \mathfrak{M}$, $m(M) = 0$, and $N \subset M$, then $N \in \mathfrak{M}$.
\end{enumerate}
And he assumed that the measures $m(M)$ satisfy three conditions:
\begin{enumerate}
   \item
     $m(X) = 1$; $m(M) \ge 0$.
   \item
     $m(\cup_{i=1}^{\infty} M_i) = \sum_{i=1}^{\infty} m(M_i)$
     when $M_i \cap M_j = \emptyset$ for $i \ne j$.
   \item
     If $m(M) = 0$ and $N \subset M$, then $m(N) = 0$.
\end{enumerate}
In today's language, this says that $m$ is a probability measure on a $\sigma$-algebra that is
complete (includes all null sets) and contains all singletons.  
Ulam announced that from a countable sequence of spaces with such
probability measures, one can construct a probability measure satisfying the same 
conditions on the product space.

We do not know whether Kolmogorov knew about Ulam's announcement when he 
wrote the \emph{Grundbegriffe}.  Ulam's axioms would have held no novelty for him,
but he would presumably have found the result on product measures interesting.
{\L}omnicki and Ulam's article, which appeared 
only in 1934\nocite{lomnicki/ulam:1934}, cites the
\emph{Grundbegriffe}, not Ulam's earlier announcement, when it lists its axioms.
Kolmogorov cited the article in 1935\nocite{kolmogorov:1935}.

\subsection{Cantelli's abstract theory}

Like Borel, Castelnuovo, and Fr\'echet, Francesco Paolo Cantelli
turned to probability after distinguishing himself in other areas
of mathematics.  His first publications on probability, expositions
of the classical theory in a teaching journal, appeared in 1905 
and 1906\nocite{cantelli:1905,cantelli:1905/1906}, when he was 
about 40.  His work on the strong law of large numbers came 
ten years later.

It was only in the 1930s, about the same time as the \emph{Grundbegriffe}
appeared, that Cantelli introduced his own abstract theory of probability.
This theory, which has important affinities
with Kolmogorov's, is developed most clearly in 
``Una teoria astratta del calcolo delle probabilit\`a'',
published in 1932\nocite{cantelli:1932} and 
listed in the~\emph{Grundbegriffe}'s bibliography,
and in ``Consid\'erations sur la convergence dans le calcul des
probabilit\'es'', a lecture Cantelli
gave in 1933 at the Institut Henri Poincar\'e in Paris
and subsequently published in 1935\nocite{cantelli:1935}.

In the 1932 article, Cantelli argued for a theory
that makes no appeal to empirical notions such as possibility, event, probability,
or independence.
This abstract theory, he said, should begin with a set of points
having finite nonzero measure.  This could be any set for which measure is defined, 
perhaps a set of points on a surface.
He wrote $m(E)$ for the area of a subset $E$.
He noted that $m(E_1 \cup E_2) = m(E_1) + m(E_2)$
and $0 \le m(E_1 E_2)/m(E_i) \le 1$ for $i = 1,2$.
He called $E_1$ and $E_2$ \emph{multipliable} when
$m(E_1 E_2) = m(E_1) m(E_2)$.  Much of probability theory, he pointed out,
including Bernoulli's law of large numbers
and Khinchin's law of the iterated logarithm, could be carried
out at this abstract level.

In the 1935 article, Cantelli explains how his abstract theory
should be related to frequencies in the world.
The classical calculus of probability, he says,
should be developed for a particular class of events in the world
in three steps:
\begin{enumerate}
   \item
     Study experimentally the 
     equally likely cases (check that they happen equally frequently),
     thus justifying experimentally the rules of total and 
     compound probability.
   \item
     Develop an abstract theory based only on 
     the rules of total and compound probability, without 
     reference to their empirical justification.
   \item
     Deduce probabilities from the abstract theory, and use them
     to predict frequencies.
\end{enumerate}
His own abstract theory, Cantelli explains, is precisely
the theory one obtains in the second step.
The theory can begin with cases that are not equally likely.  
But application of the theory, in any case, involves
initial verification and subsequent prediction of frequencies.
Cantelli reviews earlier Italian discussion of 
the relation between probability and frequency and quotes with 
approval his friend and collaborator Guido Castelnuovo, 
who had explained that limiting frequency should be taken
not as a basis for the logical construction of the calculus of probabilities
but rather as a way of connecting the calculus to its applications\nocite{castelnuovo:1919}.

A fair evaluation of the importance of Cantelli's role is clouded
by the cultural differences that separated him from Kolmogorov,
who represented a younger generation,
and even from his contemporaries Bernstein, Fr\'echet,
and Slutsky (Benzi 1988\nocite{benzi:1988}, Bru 2003a\nocite{bru:2003a}).
Cantelli belonged to an older mathematical culture
that had not absorbed the new theory of functions,
and so from Fr\'echet and Kolmogorov's point of view,
one could question whether he understood even his own discoveries.
Yet he was quick to contrast his own mathematical depth
with the shallowness of others;
he offended Slutsky with his disdain for Chuprov (Seneta 1992\nocite{seneta:1992}).

We cannot really say that Cantelli's 1932 article and 1933 lecture
were sources for the \emph{Grundbegriffe}.
The theory in Kolmogorov's 1929 article
already went well beyond anything Cantelli did in 1932,
in both degree of abstraction (instead of developing an abstract measure theory,
Cantelli had simply identified events with subsets of a geometric space for which
measure was already defined) and mathematical clarity.
The 1933 lecture was more abstract but obviously came too late to influence the~\emph{Grundbegriffe}.
On the other hand, we may reasonably assume that when Cantelli prepared the 1932 article
he had not seen Kolmogorov's 1929 article,
which was published in Russian
at a time when Russian mathematicians published their best work in German, French, and Italian.
So we may say that Cantelli developed independently of Kolmogorov
the project of combining a frequentist interpretation of probability
with an abstract axiomatization
that stayed close to the classical rules of total and compound probability.
This project was in the air.

\section{The \emph{Grund\-begriffe}}
\label{sec:grundbegriffe}

When Kolmogorov sat down to write the~\emph{Grundbegriffe}, 
in a rented cottage on the Klyaz'ma River in November 1932, 
he was already being hailed as 
the Soviet Euler.  Only 29 years of age, his 
accomplishments within probability theory alone included
definitive versions of 
the law of the iterated logarithm
(Kolmogorov 1929b\nocite{kolmogorov:1929b})
and the strong law of large numbers 
(Kolmogorov 1930b\nocite{kolmogorov:1930b}), 
as well as his pathbreaking article on Markov processes\nocite{kolmogorov:1931}.
He was personally acquainted with many of the leading mathematicians
of his time; his recent trip abroad, from June 1930 to March 1931, had included
a stay at Hilbert's department in G\"ottingen and
extensive discussions with Fr\'echet and L\'evy in France
(Shiryaev 1989\nocite{shiryaev:1989}, 2000\nocite{shiryaev:2000},
2003a\nocite{shiryaev:2003a,shiryaev:2003b}).

The~\emph{Grundbegriffe} was an exposition, not another research contribution.
In his preface, after acknowledging that Fr\'echet had shown how to liberate
measure and integration from geometry, Kolmogorov said this:
\begin{quotation}
  In the pertinent mathematical circles it has been common for some time to
  construct probability theory in accordance with this general point of view.
  But a complete presentation of the whole system, free from superfluous 
  complications, has been missing (though a book by Fr\'echet, [2] in the
  bibliography, is in preparation).
\end{quotation}
Kolmogorov aimed to fill this gap,
and he did so brilliantly and concisely, in just 62 pages.
Fr\'echet's much longer book,
which finally appeared in two volumes in 1937 and 1938
(Fr\'echet 1937--1938\nocite{frechet:1937/1938}),
is remembered only as a footnote to Kolmogorov's achievement.

Fr\'echet's own evaluation of the \emph{Grundbegriffe}'s contribution, 
quoted at the beginning of this article, is correct so far as it goes.
Borel had introduced countable additivity into probability in 1909,
and in the following twenty years many authors, including Kolmogorov,
had explored its consequences.
The \emph{Grundbegriffe} merely rounded out the picture
by explaining that nothing more was needed:
the classical rules together with countable additivity
were a sufficient basis for what had been accomplished thus far
in mathematical probability.
But it was precisely Kolmogorov's mathematical achievement,
especially his definitive work on the classical limit theorems,
that had given him the grounds and the authority to say that nothing more was needed.

Moreover, Kolmogorov's appropriation of the name \emph{probability} 
was an important rhetorical achievement, with enduring implications.
Slutsky in 1922 and Kolmogorov himself in 1927 had proposed
a general theory of additive set functions but had relied on the classical
theory to say that probability should be a special case of this general 
theory.  Now Kolmogorov proposed axioms for probability.
The numbers in his abstract theory were probabilities, not merely
valences or меры.
As we explain in \S\ref{subsec:empirical} below, his philosophical justification
for proceeding in this way
so resembled the justification that Borel, 
Chuprov, and L\'evy had given for the classical theory that they could
hardly raise objections.

It was not really true that nothing
more was needed.  Those who studied Kolmogorov's formulation in detail
soon realized that his axioms and definitions were inadequate in a number
of ways.  Most saliently, his treatment of conditional probability 
was not adequate for the
burgeoning theory of Markov processes, to which he had just made so
important a contribution.  And there were other points in the monograph where
he could not obtain natural results at the abstract level
and had to fall back to the classical examples---discrete probabilities
and probabilities in Euclidean spaces.  
But these shortcomings only gave impetus to the new theory,
for the project of filling in the gaps 
provided exciting work for a new generation of probabilists.

In this section, we take a fresh look at the \emph{Grundbegriffe}.
In \S\ref{subsec:book} we review broadly the contents of 
the book and the circumstances of its publication.   
Then, in \S\ref{subsec:framework}, we review the basic 
framework---the six axioms---and two ideas that were novel at the time:
the idea of constructing probabilities on infinite-dimensional spaces
(which lead to his celebrated consistency theorem),
and the definition of conditional probability
using the Radon-Nikodym theorem.
Finally, in~\S\ref{subsec:empirical},
we look at the explicitly philosophical part of the monograph:
the two pages in Chapter~I where Kolmogorov explains the empirical origin of his axioms.

\subsection{An overview}\label{subsec:book}

In a letter to his close friend Pavel Sergeevich Aleksandrov,
dated November 8, 1932
(Shiryaev 2003b\nocite{shiryaev:2003b}, Vol.~2, pp.~456--458),
Kolmogorov explained that he planned to spend November, December, and January
writing the~\emph{Grundbegriffe}.
The preface is dated Easter, 1933,
and the monograph appeared that same year in a series published by Springer,
\emph{Ergebnisse der Mathematik und Ihrer Grenzgebiete}.

In 1936, when Communist leaders began to complain, in the context of
a campaign against Kolmogorov's teacher Nikolai Luzin,
about the Moscow mathematicians' publishing their best work in the West
(Demidov and Pevshin 1999\nocite{demidov/pevshin:1999};
Vucinich 2000\nocite{vucinich:2000}; Lorentz 2002\nocite{lorentz:2002}, p.~205), 
Kolmogorov had his student Grigory Bavli translate the \emph{Grundbegriffe} into Russian.
An English translation, by Nathan Morrison, was published in 1950,
with a second edition in 1956.  
The second Russian edition, published in 1974, modernized the exposition 
substantially and is therefore less useful for those interested
in Kolmogorov's viewpoint in 1933.  We have made our own translations from the
German original, modernizing the notation in only three minor respects:
$\cup$ instead of $+$, $\cap_n$ instead of $\mathfrak{D}_n$, and 
$\emptyset$ instead of $0$ for the empty set.

The monograph has six chapters:
\begin{enumerate}\renewcommand{\labelenumi}{\textbf{\Roman{enumi}.}}
  \item
     \textbf{The elementary probability calculus.}  This chapter deals with the 
     case where the sample space is finite.  Even in this elementary case, Kolmogorov
     treats conditional probabilities as random variables.
  \item
     \textbf{Infinite probability fields.}  Here Kolmogorov introduces countable
     additivity and discusses the two classical types of probability measures, as we now call
     them: discrete probability measures and probability measures on Euclidean 
     space specified by cumulative distribution functions.
  \item
     \textbf{Random variables.}  Here Kolmogorov proves his consistency theorem:
     Given a Cartesian product $R^M$, where $R$ is the real numbers and $M$ is an 
     arbitrary set, and given consistent distribution functions for all 
     the finite-dimensional marginals, the resulting set function on the 
     field in $R^M$ formed by finite-dimensional cylinder sets is countably additive
     and therefore extends, by Carath\'eodory's theorem, 
     to a probability measure on a Borel field.  Kolmogorov then
     discusses convergence in probability, a concept
     he attributes to Bernoulli and Slutsky.
  \item
     \textbf{Mathematical expectations.}  This chapter begins with a discussion, without 
     proofs, of the abstract Lebesgue integral.  After explaining that the integral
     of a random variable is called its mathematical expectation, Kolmogorov discusses
     Chebyshev's inequality and some criteria for convergence in probability.  In a final 
     section, he gives conditions for interchanging the expectation sign with
     differentiation or integration with respect to a parameter.  
  \item
     \textbf{Conditional probabilities and expectations.}  This is the 
     most novel chapter; here Kolmogorov defines
     conditional probability and conditional expectation using the
     Radon-Nikodym theorem.
  \item
     \textbf{Independence.  Law of large numbers.}  After defining 
     independence and correlation, Kolmogorov reviews, without proofs, his own
     and Khinchin's results on the law of large numbers.
\end{enumerate}
There is also an appendix on the zero-one law.

In his preface, after saying that he aimed to present
the abstract viewpoint without superfluous complications, 
Kolmogorov indicated that there was nevertheless some novelty in the book:
\begin{quotation}
  I would also like to call attention here to the points in the further presentation
  that fall outside the circle of ideas, familiar to specialists, that I just 
  mentioned.  These points are the following:  probability distributions in
  infinite-dimensional spaces (Chapter III, \S4), differentiation and integration
  of mathematical expectations with respect to a parameter (Chapter IV, \S5),
  and above all the theory of conditional probabilities and expectations
  (Chapter V) \dots
\end{quotation}
We will not discuss the differentiation and integration of expectations
with respect to a parameter,
for Kolmogorov's results here, even if novel to specialists in probability,
were neither mathematically surprising nor philosophically significant.
The other two results have subsequently received much more attention,
and we take a look at them in the next section.

\subsection{The mathematical framework}\label{subsec:framework}

Kolmogorov's six axioms for probability are so familiar that it seems superfluous
to repeat them but so concise that it is easy to do so.  We do repeat them
(\S\ref{subsubsec:six}), and then we discuss the two points just 
mentioned:  the
consistency theorem (\S\ref{subsubsec:consistency})
and the treatment of conditional probability and expectation~(\S\ref{subsubsec:radnik}).
As we will see, a significant amount of mathematics was involved in both 
cases, but most of it was due to earlier authors---Daniell in
the case of the consistency theorem and Nikodym in the case of conditional
probabilities and expectations.
Kolmogorov's contribution, more rhetorical and philosophical than
mathematical, was to bring this mathematics into his framework for probability.

\subsubsection{The six axioms}\label{subsubsec:six}

Kolmogorov began with five axioms concerning a set $E$
and a set $\FFF$ of subsets of $E$,
which he called \emph{random events}:
\begin{enumerate}\renewcommand{\labelenumi}{\Roman{enumi}}
   \item
      $\FFF$ is a field of sets.%
\footnote{For definitions of \emph{field of sets} and \emph{Borel field},
          see p.~\pageref{p:field}.}
   \item
      $\FFF$ contains the set $E$.
   \item
      To each set $A$ from $\FFF$ is assigned a nonnegative 
      real number $\mathsf{P}(A)$.  This number $\mathsf{P}(A)$
      is called the probability of the event $A$.
   \item
      $\mathsf{P}(E) = 1$.
   \item
      If $A$ and $B$ are disjoint, then 
      $$
             \mathsf{P}(A \cup B) = \mathsf{P}(A)+ \mathsf{P}(B).
      $$
\end{enumerate}
He then 
added a sixth axiom, redundant for finite $\FFF$ but independent of 
the first five axioms for infinite $\FFF$:
\begin{enumerate}\renewcommand{\labelenumi}{\Roman{enumi}}
  \setcounter{enumi}{5}
  \item
     If $A_1 \supseteq A_2 \supseteq \cdots$ is a decreasing sequence
     of events from $\FFF$ with $\bigcap_{n=1}^{\infty} A_n = \emptyset$,
     then $\lim_{n\to\infty}\mathsf{P} (A_n) = 0$.
\end{enumerate}
This is the \emph{axiom of continuity}.  Given the first five axioms,
it is equivalent to countable additivity.

The six axioms can be summarized by saying that $\mathsf{P}$
is a nonnegative additive set function in the sense of Fr\'echet with 
$\mathsf{P}(E) = 1$.

In contrast with Fr\'echet, who had debated countable additivity with 
de Finetti a few years before
(Fr\'echet 1930\nocite{frechet:1930},
de Finetti 1930\nocite{definetti:1930},
Cifarelli and Regazzini 1996\nocite{cifarelli/regazzini:1996}),
Kolmogorov did not try to make a substantive argument for it.  
Instead, he made this statement (p.~14):
\begin{quote}
   \dots  Since the new axiom is essential only for infinite fields of probability,
   it is hardly possible to explain its empirical meaning\dots\,.
   In describing any actual observable random process, we can obtain only
   finite fields of probability.  Infinite fields of probability occur only
   as idealized models of real random processes.  \emph{This understood,
   we limit ourselves arbitrarily to models that satisfy Axiom VI.}  
   So far this 
   limitation has been found expedient in the most diverse investigations.
\end{quote}
This echoes Borel, who 
adopted countable additivity not as a matter of principle but because 
he had not encountered circumstances where its rejection seemed
expedient (1909a\nocite{borel:1909a}, \S I.5).  But Kolmogorov was much clearer
than Borel about the purely instrumental significance of infinity.

\subsubsection{Probability distributions in infinite-dimensional spaces}
\label{subsubsec:consistency}

Suppose, using modern terminology, that 
$(E_1,\mathfrak{F}_1),(E_2,\mathfrak{F}_2),\dots$ 
is a sequence of measurable spaces.  For each finite set of 
indices, say $i_1,\dots,i_n$, write $\mathfrak{F}^{i_1,\dots,i_n}$  
for the induced $\sigma$-algebra in the product space $\prod_{j=1}^n E_{i_j}$.
Write $E$ for the product of all the $E_i$, and write 
$\mathfrak{F}$ for the algebra (not a $\sigma$-algebra) consisting of all the 
cylinder subsets of $E$ corresponding to elements of the various $\mathfrak{F}^{i_1,\dots,i_n}$.
Suppose we define consistent probability measures for all the 
marginal spaces $(\prod_{j=1}^n E_{i_j},\mathfrak{F}^{i_1,\dots,i_n})$.
This defines a set function on $(E,\mathfrak{F})$.  Is it 
countably additive?

In general, the answer is negative; a counterexample was 
given by Erik Sparre Andersen and B{\o}rge Jessen in 1948\nocite{andersen/jessen:1948}.
But as we noted in \S\ref{subsec:steinulam}, Ulam had announced in 1932\nocite{ulam:1932}
a positive answer for the case where all the
measures on the marginal spaces are product measures.
Kolmogorov's consistency theorem, in \S 4 of Chapter III of the
\emph{Grundbegriffe}, answered it affirmatively for another case,
where each $E_i$ is a copy of the real numbers and each $\mathfrak{F}_i$ consists
of the Borel sets.
(Formally, though, Kolmogorov had a slightly different starting point:
finite-dimensional distribution functions, not finite-dimensional measures.)

In his September 1919 article (Daniel 1919b)\nocite{daniell:1919b},
Daniell had proven a closely related theorem, using an infinite-dimensional
distribution function as the starting point.  In 
Bourbaki's judgment (1994\nocite{bourbaki:1994}, p.~243), 
the essential mathematical content of Kolmogorov's result is 
already in Daniell's.  
But Kolmogorov did not cite Daniell in the \emph{Grundbegriffe}, 
and even 15 years later, in 1948 (\S3.1),
Gnedenko and Kolmogorov\nocite{gnedenko/kolmogorov:1948}
ignored Daniell while claiming
the construction of probability measures on infinite products as a Soviet achievement.
What are we to make of this?

In a commemoration of Kolmogorov's early work, 
published in 1989\nocite{doob:1989}, Doob hazards the guess
that Kolmogorov was unaware of Daniell's 
result\nocite{daniell:1918,daniell:1919a,daniell:1919b,daniell:1920}
when he wrote the \emph{Grundbegriffe}.
This may be true.
He would not have been the first author to do repeat Daniell's result;
B{\o}rge Jessen presented the result
to the seventh Scandinavian mathematical conference in 1929
and became aware of Daniell's priority only
in time to acknowledge it in a footnote to the published version
(Jessen 1930)\nocite{jessen:1930}.
Saks did not even mention Daniell's integral in his first edition in 1933.
This integral became more prominent later that year and the next,
with the increased interest in Wiener's work on Brownian motion,%
\footnote{One small indication that Kolmogorov may have been aware of
  Wiener's and Daniell's articles before the end of 1933 is the fact
  that he submitted an article on Brownian motion to 
  the \emph{Annals of Mathematics}\nocite{kolmogorov:1934a}, the American
  journal where Daniell and Wiener had published, in 1933;
  it appeared in 1934 with the legend ``Received September 9, 1933''.}
but the articles by Daniell that Wiener had cited did not include
the September 1919 article.

In 1935, Wiener\nocite{wiener:1935} and Jessen\nocite{jessen:1935} both 
called attention to Daniell's priority.  So 
it seems implausible that Gnedenko and Kolmogorov would have remained 
unaware of Daniell's construction in 1948.
But their claim of priority for Kolmogorov may appear more reasonable
when we remember that the \emph{Grundbegriffe} was not meant as a contribution to pure mathematics.
Daniell's and Kolmogorov's theorems seem almost identical when they 
are assessed as mathematical discoveries, but they differed in context and
purpose.  Daniell was not thinking about probability, whereas the slightly
different theorem formulated by Kolmogorov was about probability. 
Neither Daniell nor Wiener undertook to make probability into a
conceptually independent branch of mathematics by establishing a general
method for representing it measure-theoretically.

Kolmogorov's theorem was more general than Daniell's in one respect---%
Kolmogorov considered an index set of arbitrary cardinality,
rather than merely a denumerable one as Daniell had.
This greater generality is merely formal, in two senses;
it involves no additional mathematical complications,
and it does not seem to have any practical application.  
The obvious use of a non-denumerable index would be to represent continuous time\label{p:trajectory},
and so we might conjecture that Kolmogorov is thinking of making probability statements
about trajectories in continuous time,
as Wiener had done in the 1920s but Kolmogorov had not done in 1931.
This might have been motivated not only by Bachelier's work on Brownian motion,
which Kolmogorov had certainly studied,
but also by the work on point processes by Filip Lundberg, Agnar Erlang,
and Ernest Rutherford and Hans Geiger in the first decade of the twentieth century
(Cram\'er 1976\nocite{cramer:1976}, p.~513).
But Kolmogorov's construction does not accomplish anything in this direction.  
The $\sigma$-algebra on the product
obtained by the construction contains too few sets; in the case of 
Brownian motion, it does not include the set of continuous trajectories.
It took some decades of further research were required 
in order to develop general methods of defining $\sigma$-algebras,
on suitable function spaces,
rich enough to include the infinitary events one typically wants 
to discuss (Doob 1953\nocite{doob:1953}; Schwartz 1973\nocite{schwartz:1973};
Dellacherie and Meyer 1975\nocite{dellacherie/meyer:1975};
Bourbaki 1994\nocite{bourbaki:1994}, pp.~243--245).
The generally topological character of these richer extensions
and the failure of the consistency theorem for 
arbitrary Cartesian products remain two important caveats to  
the \emph{Grundbegriffe}'s main thesis---that probability is 
adequately represented by the abstract notion of a probability measure.

It is by no means clear that Kolmogorov even did prefer
to start the study of stochastic processes with unconditional probabilities on trajectories.
Even in 1935, he recommended the opposite
(Kolmogorov 1935\nocite{kolmogorov:1935},
pp.~168--169 of the English translation).
In the preface to the second
Russian edition of the \emph{Grundbegriffe} (1974),
he acknowledged Doob's innovations with a rather neutral comment:  
``Nowadays people prefer to define conditional 
probability with respect to an arbitrary algebra 
$\mathcal{F}^{\prime} \subseteq \mathcal{F}$''\dots.

\subsubsection{Experiments and conditional probability}\label{subsubsec:radnik}

In the case where $A$ has nonzero probability, Kolmogorov defined 
$\mathsf{P}_A(B)$ in the usual way.  
He called it ``bedingte Wahrscheinlichkeit'', which translates into 
English as ``conditional probability''.

His general treatment of conditional probability and expectation 
was novel.  It began with a set-theoretic formalization of the 
concept of an \emph{experiment} (\emph{Versuch} in German).  
Here Kolmogorov had in mind a subexperiment of the grand 
experiment defined by the conditions $\mathfrak{S}$.  This 
subexperiment might give only limited information about the outcome
$\xi$ of the grand experiment.  If the subexperiment is 
well-defined, Kolmogorov reasoned, it should define
a partition $\mathfrak{A}$ of the sample space $E$ for the grand experiment:
its outcome should amount to specifying 
which element of $\mathfrak{A}$ contains $\xi$.
He formally identified
the subexperiment with $\mathfrak{A}$\label{p:decomposition}.
Then he introduced the idea of conditional probability relative
to $\mathfrak{A}$.
As he explained, first in the case of finite fields of probability
(Chapter I) and then in the general case (Chapter V),
this is a random variable, not a single number:
\begin{itemize}
  \item
    In the finite case, he wrote $\mathsf{P}_{\mathfrak{A}}(B)$ for the
    random variable whose value at each point $\xi$ of $E$ is $\mathsf{P}_A(B)$,
    where $A$ is the element of $\mathfrak{A}$ containing $\xi$, and 
    he called this random variable the ``conditional probability of $B$ after
    the experiment $\mathfrak{A}$'' (p.~12).  
    This random variable is well defined for all the $\xi$ 
    in elements $\mathfrak{A}$ that have positive probability, and 
    these $\xi$ form an event that has probability one.
  \item
    In the general case,
    he represented the partition $\mathfrak{A}$ by a
    function $u$ on $E$ that induces it, and he wrote $\mathsf{P}_u(B)$
    for any random variable that satisfies
\begin{equation}\label{eq:nikodym}
   \mathsf{P}_{\{u \subset A\}}(B) = \mathsf{E}_{\{u \subset A\}}  \mathsf{P}_u(B)
\end{equation}
   for every set $A$ of possible values of $u$ such that 
   the subset $\{\xi \st u(\xi) \in A\}$ of $E$ (this is what
   he meant by $\{u \subset A\}$) is measurable and has
   positive probability (p.~42).
   By the Radon-Nikodym theorem (only recently proven by Nikodym), 
   this random variable is unique up to
   a set of probability zero.  Kolmogorov called it 
   the ``conditional probability of $B$ with respect to (or knowing) $u$''.  
   He defined $\mathsf{E}_u(y)$, which he called ``the conditional expectation
   of the variable $y$ for a known value of $u$'', analogously (p.~46).
\end{itemize} 
Because many readers will be more familiar with Doob's slightly different 
definitions, it may be wise to add a few words of clarification.  
The mapping $u$ can be any
mapping from $E$ to another set, say $F$, that represents 
$\mathfrak{A}$ in the sense that it maps two elements of $E$ to the 
same element of $F$ if and only if they are in the same element of
$\mathfrak{A}$.  The choice of $u$ has no effect on the definition of
$\mathsf{P}_u(B)$, because~(\ref{eq:nikodym}) can be 
rewritten without reference to $u$; it says that the conditional
probability for $B$ given $\mathfrak{A}$ is any random variable $z$ such that 
$\mathsf{E}_C (z) = \mathsf{P}_C(B)$ for every $C$ in $\mathfrak{F}$
that is a union of elements of $\mathfrak{A}$ and has positive 
probability.  

Kolmogorov was doing no new mathematics here; the mathematics is
Nikodym's.  But Kolmogorov was the first to prescribe that Nikodym's result
be used to define conditional probability, and this involves a substantial
philosophical shift.  The rule of compound probability had encouraged 
mathematicians of the eighteenth and nineteenth centuries to see 
the probability of an event $A$ ``after $B$ has 
happened'' or ``knowing $B$'' as something to be sought directly in 
the world and then used together with $B$'s probability to construct
the joint probability of the two events.  Now the joint probability distribution
was consecrated as the mathematical starting point---what we abstract from
the real world before the mathematics begins.  Conditional probability
now appears as something defined within the mathematics, not something we 
bring directly from outside.%
\footnote{George Boole had emphasized the possibility of deriving
          conditional probabilities from unconditional probabilities, and 
          Freudenthal and Steiner (1966, p.~189)\nocite{freudenthal/steiner:1966} have pointed to 
          him as a precursor of Kolmogorov.}

\subsubsection{When is conditional probability meaningful?}

To illustrate his understanding of conditional probability,
Kolmogorov discussed Bertrand's paradox of the great circle,
which he called, with no specific reference,
a ``Borelian paradox''.  His explanation of the paradox was simple
but formal.  After noting
that the probability distribution for the second point conditional
on a particular great circle is not uniform, he said (p.~45):  
\begin{quotation}
This demonstrates the inadmissibility of the idea of 
conditional probability with respect to 
a given isolated hypothesis with probability zero.
One obtains a probability distribution for the latitude
on a given great circle only when that great circle is considered as 
an element of a partition of the entire surface of the sphere
into great circles with the given poles.
\end{quotation}
This explanation has become part of the culture of probability theory,
even though it cannot completely replace the
more substantive explanations given by Borel.

Borel insisted that we explain how the measurement on which we will 
condition is to be carried out.  This accords with Kolmogorov's insistence
that a partition be specified, for a procedure for  
measurement will determine such a partition.  Kolmogorov's explicitness on 
this point was a philosophical advance.  On the other hand, 
Borel demanded more than the specification of a partition. 
He demanded that the measurement be specified realistically enough that
we see real partitions into events with nonzero probabilities, 
not merely a theoretical limiting partition into events with zero probabilities.

Borel's demand that the theoretical partition into events of probability
zero be approximated by a 
partition into events of positive probability seems to be needed in 
order to rule out nonsense.  This is illustrated by an
example given by L\'evy in 1959\nocite{levy:1959}.  
Suppose the random variable $X$ is distributed uniformly on 
$[0,1]$.  For each $x \in [0,1]$, let $C(x)$ be the denumerable set given by
$$
   C(x) := \{ x^{\prime} \st x^{\prime}\in [0,1] \mbox{ and } 
              x-x^{\prime} \mbox{ is rational}\}.
$$
Let $\mathfrak{A}$ be the partition
$$
    \mathfrak{A} := \{C(x) \st x \in [0,1]\}.
$$
What is the distribution of $X$ conditional on, say, 
the element $C(\frac{\pi}{4})$
of the partition $\mathfrak{A}$?  We cannot respond that $X$ is 
now uniformly distributed over $C(\frac{\pi}{4})$,
because $C(\frac{\pi}{4})$ is a denumerable set.  
Kolmogorov's formal point of view gives an equally 
unsatisfactory response; it tells us that
conditionally on $C(\frac{\pi}{4})$, the random variable $X$
is still uniform on $[0,1]$, so that the set on which 
we have conditioned, $C(\frac{\pi}{4})$, has conditional probability zero
(see \S\ref{app:garb} below).
It makes more sense, surely, to take Borel's point of view, as 
L\'evy did, and reject the question.  It makes no practical sense 
to condition on the zero-probability
set $C(\frac{\pi}{4})$, because there is no real-world measurement that could
have it, even in an idealized way, as an outcome.

The condition that we consider idealized events of 
zero probability only when they approximate more down-to-earth
events of nonzero probability again brings topological
ideas into the picture.  
This issue was widely discussed in the 1940s and 1950s, 
often with reference to an example discussed by Jean
Dieudonn\'e (1948\nocite{dieudonne:1948}), in which 
the conditional probabilities defined by Kolmogorov do not even have versions 
(functions of $\xi$ for fixed $B$) that form 
probability measures (when considered as functions of $B$ for fixed $\xi$).
Gnedenko and Kolmogorov (1949\nocite{gnedenko/kolmogorov:1949})
and David Blackwell (1956\nocite{blackwell:1956}) addressed the issue
by formulating more or less topological conditions on measurable spaces or probability measures
that rule out pathologies such as those adduced by Dieudonn\'e and L\'evy.
For modern versions of these conditions,
see Rogers and Williams (2000)\nocite{rogers/williams:2000}.

\subsection{The empirical origin of the axioms}
\label{subsec:empirical}

Kolmogorov devoted about two pages of the \emph{Grundbegriffe}
to the relation between his axioms and the real world.
These two pages, the most thorough explanation of his frequentist philosophy
he ever provided, are so important to our story that we quote them in full.
We then discuss how this philosophy was related to the thinking of his predecessors.

\subsubsection{In Kolmogorov's own words}

Section 2 of Chapter I of the~\emph{Grund\-begriffe} is entitled
``Das Verh\"altnis zur Erfahrungswelt''.
It is only two pages in length.
This subsection consists of a translation of the section in its entirety.

\begin{center}
    \textbf{The relation to the world of experience}
\end{center}

\noindent
The theory of probability is applied to the real world of experience as follows:
\begin{enumerate}
   \item
     Suppose we have a certain system of conditions $\mathfrak{S}$, 
     capable of unlimited repetition.
   \item
     We study a fixed circle of phenomena that can arise
     when the conditions $\mathfrak{S}$ are realized.
     In general, these phenomena can
     come out in different ways in different 
     cases where the conditions are realized.
     Let $E$ be the set of the different possible variants $\xi_1,\xi_2,\dots$
     of the outcomes of the phenomena.
     Some of these variants might actually not occur.
     We include in the set $E$ all the variants we regard \emph{a priori}
     as possible.
   \item
     If the variant that actually appears
     when conditions $\mathfrak{S}$ are realized belongs to a set
     $A$ that we define in some way, then we say that the event $A$
     has taken place.
\end{enumerate}

Example.
The system of conditions $\mathfrak{S}$ consists of flipping a coin twice.
The circle of phenomena mentioned in point 2 consists of the 
appearance, on each flip, of head or tails.
It follows that there are four possible variants (\emph{elementary events}), namely
$$
  \mbox{heads---heads, heads---tails, tails---heads, tails---tails}.
$$
Consider the event $A$ that there is a repetition.
This event consists of the first and fourth elementary events.
Every event can similarly be regarded as a set of elementary events.
\begin{enumerate}
   \setcounter{enumi}{3}
   \item
     Under certain conditions, that we will not go into further here,
     we may assume that an event $A$ that does or does not 
     occur under conditions $\mathfrak{S}$ is assigned a real
     number $\mathsf{P} (A)$ with the following properties:
       \begin{enumerate}
       \renewcommand{\labelenumii}{\textbf{\Alph{enumii}.}}
         \item
            One can be practically certain that if the 
            system of conditions $\mathfrak{S}$ is repeated
            a large number of times, $n$, and the event $A$
            occurs $m$ times, then the 
            ratio $m/n$ will differ only slightly from 
            $\mathsf{P} (A)$.
         \item
            If $\mathsf{P} (A)$ is very small, then one can be practically
            certain that the event $A$ will not occur
            on a single realization of the conditions $\mathfrak{S}$.
       \end{enumerate}
\end{enumerate}

\emph{Empirical Deduction of the Axioms.}
Usually one can assume that the system $\FFF$ of events $A,B,C\dots$ 
that come into consideration and are assigned definite probabilities
form a field that contains $E$ (Axioms I and II
and the first half of Axiom III---the existence of the probabilities).
It is further evident that $0 \le m/n \le 1$ always holds,
so that the second half of Axiom III appears completely natural.
We always have $m=n$ for the event $E$,
so we naturally set $\mathsf{P}(E)=1$ (Axiom IV).
Finally, if $A$ and $B$ are mutually incompatible
(in other words, the sets $A$ and $B$ are disjoint),
then $m=m_1+m_2$, where $m$, $m_1$, and $m_2$ are the numbers
of experiments in which the events $A \cup B$, $A$, and $B$ happen, respectively.
It follows that
$$
  \frac{m}{n} =  \frac{m_1}{n}  +  \frac{m_2}{n}.
$$
So it appears appropriate to set
$\mathsf{P}(A \cup B) = \mathsf{P}(A) + \mathsf{P}(B)$.

Remark I.  If two assertions are both practically certain,
then the assertion that they are simultaneously correct is practically certain,
though with a little lower degree of certainty.
But if the number of assertions is very large,
we cannot draw any conclusion whatsoever
about the correctness of their simultaneous assertion
from the practical certainty of each of them individually.
So it in no way follows
from Principle \textbf{A} that $m/n$ will differ only a little from $\mathsf{P}(A)$ 
in every one of a very large number of series of experiments, 
where each series consists of $n$ experiments.

Remark II.  By our axioms, the impossible event (the empty set) 
has the probability $\mathsf{P}(\emptyset) =0$.   But the converse inference, from
$\mathsf{P}(A) =0$ to the impossibility of $A$, does not by any means follow.
By Principle \textbf{B}, the event $A$'s having probability zero implies only     
that it is practically impossible that it will happen on a particular 
unrepeated realization of the conditions $\mathfrak{S}$.  This by no means implies
that the event $A$ will not appear in the course of a sufficiently long 
series of experiments.  When $\mathsf{P}(A) =0$ and $n$ is very large, 
we can only say, by Principle \textbf{A}, that the quotient $m/n$ 
will be very small---it might, for example, be equal to $1/n$.

\subsubsection{The philosophical synthesis}

The philosophy set out in the two pages we have just translated is a 
synthesis, combining elements of the 
German and French traditions in a way not previously attempted.

By his own testimony, Kolmogorov drew first and foremost from von Mises.
In a footnote, he put the matter this way:
\begin{quotation}
\noindent
   \dots In laying out the assumptions needed to make probability theory 
   applicable to the world of real events, the author has followed in large
   measure the model provided by Mr.~von Mises \dots
\end{quotation}
The very title of this section of the \emph{Grundbegriffe},
``Das Verh\"altnis zur Erfahrungswelt'', echoes the title of the section of
von Mises's 1931 book that Kolmogorov cites, 
``Das Verh\"altnis der Theorie zur Erfahrungswelt''. 
But Kolmogorov does not discuss Kollektivs.
As he explained in his letter to Fr\'echet in 1939 (translated in Appendix \ref{app:kolfr}), 
he thought only a finitary
version of this concept would reflect experience truthfully, and a finitary version,
unlike the infinitary version, could not be 
made mathematically rigorous.  So for mathematics, one should adopt an
axiomatic theory, ``whose practical value can be deduced directly'' from a finitary
concept of Kollektivs.

Although Kollektivs are in the background, Kolmogorov starts in a 
way that echoes Chuprov more than von Mises.
He writes, as Chuprov did (1910, p.~149)\nocite{chuprov:1910}, of
a system of conditions (Komplex von Bedingungen in German,
комплекс условий
in Russian).
Probability is relative to a system of conditions
$\mathfrak{S}$, and yet further conditions must be satisfied in order
for events to be assigned a probability under
$\mathfrak{S}$.  Kolmogorov says nothing more about these conditions,
but we may conjecture that he 
was thinking of the three sources of probabilities mentioned by von Mises:  
chance devices designed for gambling, statistical phenomena, and physical theory.

Like the German objectivists, Kolmogorov did not think that every event has a probability.
Later, in his article on probability in the second edition of the
\emph{Great Soviet Encyclopedia}, published in 1951\nocite{kolmogorov:1951},
he was even more explicit about this:
\begin{quotation}
  Certainly not every event whose occurrence is not uniquely
  determined under given conditions has a definite probability
  under these conditions.  The assumption that a definite 
  probability (i.e. a completely defined fraction of the number
  of occurrences of an event if the conditions are repeated a large
  number of times) in fact exists for a given event under
  given conditions is a \emph{hypothesis} which must be verified
  or justified in each individual case.
\end{quotation}

Where do von Mises's two axioms---probability as a limit of 
relative frequency and its invariance under selection of subsequences---appear
in Kolmogorov's account?  Principle \textbf{A} 
is obviously a finitary version of von Mises's axiom
identifying probability as the limit of relative frequency.  
Principle \textbf{B}, on the other hand, is the
strong form of Cournot's principle (see \S\ref{subsubsec:twoforms} above).
Is it a finitary version of von Mises's principle of invariance under selection?
Perhaps so.
In a Kollektiv, von Mises says, we have no way of singling out an unusual infinite
subsequence.  One finitary version of this is that we have no way of 
singling out an unusual single trial.  So when we do select a single trial
(a single realization of the conditions
$\mathfrak{S}$, as Kolmogorov puts it), we should not expect anything unusual. 
In the special case where the probability is very small, the usual is that
the event will not happen.

Of course, Principle \textbf{B}, like Principle \textbf{A}, is only satisfied
when there is a Kollektiv---i.e., under certain conditions.  
Kolmogorov's insistence on this point 
is confirmed by his comments in 1956\nocite{kolmogorov:1956},
quoted on p.~\pageref{p:weak} above,
on the importance and nontriviality of the step
from ``usually'' to ``in this particular case''.

As Borel and L\'evy had explained so many times,
Principle \textbf{A} can be deduced from Principle \textbf{B}
together with Bernoulli's law of large numbers,
which is a consequence of the axioms
(see our discussion of Castelnuovo and Fr\'echet's use of
this idea on p.~\pageref{p:specialcase}).
But in the framework that Kolmogorov sets up, the deduction requires 
an additional assumption; we must assume that Principle \textbf{B} applies
not only to the probabilities specified for repetitions of conditions $\mathfrak{S}$
but also to the corresponding probabilities (obtaining by assuming independence)
for repetitions of $n$-fold repetitions of $\mathfrak{S}$.
It is not clear that this additional assumption is appropriate,
not only because we might hesitate about independence
(see Shiryaev's comments
in the third Russian edition of the \emph{Grundbegriffe}, p.~120),
but also because the enlargement of our model to $n$-fold repetitions might 
involve a deterioration in its empirical precision, to the extent that we are no longer justified
in treating its high-probability predictions as practically certain.
Perhaps these considerations justify Kolmogorov's presenting 
Principle \textbf{A} as an independent principle alongside 
Principle \textbf{B} rather than as a consequence of it.

Principle \textbf{A} has an independent status in Kolmogorov's story, however, 
even if we do regard it as a consequence of 
Principle \textbf{B} together with Bernoulli's law of large numbers, 
because it comes into play 
at a point that precedes the adoption of the axioms and hence precedes the 
derivation of the law of large numbers:  it is used 
to motivate (empirically deduce) the axioms 
(cf. Bartlett 1949\nocite{bartlett:1949}). 
The parallel to the thinking of  Hadamard and L\'evy is very striking.  In their picture,
the idea of equally likely cases motivated the rules (the 
rules of total and compound probability), while Cournot's principle
linked the resulting theory with reality.  The most important change Kolmogorov makes
in this picture is to replace
equally likely cases with frequency; frequency now motivates the axioms of 
the theory, but Cournot's principle is still the essential 
link with reality.

In spite of the obvious influence of Borel and L\'evy,
Kolmogorov cites only von Mises in this section of the \emph{Grundbegriffe}.
Philosophical works by Borel and L\'evy, along with those by Slutsky
and Cantelli, do appear in the \emph{Grundbegriffe}'s 
bibliography, but their appearance is explained
only by a sentence in the preface:  ``The bibliography gives some
recent works that should be of interest from a foundational viewpoint.'' 
The emphasis on von Mises may have been motivated in part
by political prudence.  Whereas Borel and L\'evy persisted in speaking of the subjective
side of probability, von Mises was an uncompromising frequentist.
Whereas Chuprov and Slutsky worked in economics and statistics, 
von Mises was an applied mathematician, concerned more with
aerodynamics than social science, and the 
relevance of his work on Kollektivs to physics 
had been established in the Soviet literature by Khinchin
in 1929\nocite{khinchin:1929} 
(see also Khinchin, 1961\nocite{khinchin:1961},
and Siegmund-Schultze, 2004\nocite{siegmund-schultze:2004b}).
(For more information on the political context, see
Blum and Mespoulet 2003\nocite{blum/mespoulet:2003};
Lorentz 2002\nocite{lorentz:2002}; Mazliak 2003\nocite{mazliak:2003}; Seneta 2003\nocite{seneta:2003}; 
Vucinich 1999, 2000, 2002\nocite{vucinich:1999,vucinich:2000,vucinich:2002}.)
Certainly, however, Kolmogorov's high regard for von Mises's theory of Kollektivs was sincere.
It emerged again in the 1960s, as a motivation for
Kolmogorov's thinking about the relation between algorithmic complexity and 
probability (Cover et al.\ 1989\nocite{cover/etal:1989},
Vovk and Shafer 2003\nocite{vovk/shafer:2003}).

The evident sincerity of Kolmogorov's frequentism,
and his persistence in it throughout his career,
gives us reason to think twice about the proposition,
which we discussed on p.~\pageref{p:trajectory} that he had in mind,
when he wrote the \emph{Grundbegriffe},
a picture in which a Markov process is represented not by transition probabilities,
as in his 1931 article,
but by a probability space in which one can make probability statements about
trajectories, as in Wiener's work.
The first clause in Kolmogorov's frequentist credo is
that the system of conditions $\mathfrak{S}$ should be capable of unlimited repetition.
If trajectories are very long, then this condition may be satisfied by transition probabilities
(it may be possible to repeatedly start a system in state $x$ and check where
it is after a certain length of time) without being satisfied to the same degree by
trajectories (it may not be practical to run through more than a single whole trajectory).
So transition probabilities might have made more sense to Kolmogorov the frequentist
than probabilities for trajectories.

\section{Reception}\label{sec:reception}

Mythology has it that the impact of the \emph{Grundbegriffe} was immediate and revolutionary.
The truth is more ordinary.
Like every important intellectual contribution, Kolmogorov's axiomatization was absorbed slowly.
Established scholars continued
to explore older approaches and engage in existing controversies,
while younger ones gradually made the new approach the basis for new work.
The pace of this process was influenced not only by the intellectual possibilities
but also by the turmoil of the times.
Every mathematical community in Europe, beginning with Germany's,
was severely damaged by Hitler's pogroms, Stalin's repression,
and the ravages of the second world war.%
\footnote{Many of the mathematicians in our story were direct victims
          of the Nazis.  Halbwachs, Antoni {\L}omnicki, and Saks were murdered.  
          Doeblin and Hausdorff committed suicide to escape the same fate.
          Bavli died at the 
          front.  Banach died from his privations just after the war ended.
          Lo\'eve, arrested and interned at Drancy, was saved only 
          by the liberation of Paris.
          Castelnuovo, L\'evy, Schwartz, and Steinhaus survived in hiding.  
          Others, including {\L}ukasiewicz, Nikodym, Sierpi\'nski, and Van Dantzig, 
          merely lost their jobs during the war. 
          Those who fled Europe included Felix Bernstein,
          Carnap, Einstein, Feller, Hadamard, Popper, Rademacher, Reichenbach,
          von Mises, and Wald.} 

We cannot examine these complexities
in depth, but we will sketch in broad strokes how
the \emph{Grundbegriffe} came to have the iconic status it now enjoys.
In \S\ref{subsec:acceptance} we look at how Kolmogorov's axiomatization became
the accepted framework for mathematical research in probability, and in 
\S\ref{subsec:disappear} we look at how the philosophical interpretation of
the framework evolved.

\subsection{Acceptance of the axioms}\label{subsec:acceptance}

Although the \emph{Grundbegriffe} was praised by reviewers, its abstract measure-theoretic
approach continued to co-exist, during the 1930s, with other approaches.
The controversy concerning von Mises's concept of Kollektivs continued, and although
most mathematicians weighed in against using Kollektivs as the foundation of probability,
Kolmogorov's axioms were not seen as the only alternative.  Only after the second 
world war did the role of the abstract measure-theoretic framework become dominant
in research and advanced teaching.

\subsubsection{Some misleading reminiscences}

Reminiscences of mathematicians active in the 1930s 
sometimes give the impression that the acceptance of Kolmogorov's framework
was immediate and dramatic, 
but these reminiscences must be taken with a grain of salt and a large dose of context.  

Particularly misleading is a 
passage (p.~67--68) in the memoir L\'evy published in 1970\nocite{levy:1970},
when he was in his eighties:
\begin{quotation}
  Starting in 1924, I gradually became accustomed to the idea that one 
  should not consider merely what I called the true probability laws.
  I tried to extend a true law.  I arrived at the idea, arbitrary as it might be, 
  of a law defined in a certain Borel field.  I did not think of saying to myself
  that this was the correct foundation for the probability calculus; I did not have
  the idea of publishing so simple an idea.  Then, one day, I received A.
  Kolmogorov's tract on the foundations of the probability calculus.  
  I realized what a chance I had lost.  But it was too late.  When would I ever
  be able to figure out which of my ideas merited being published?
\end{quotation}
In evaluating these words, one must take account 
of the state of mind L\'evy displays throughout the memoir. 
Nothing was discovered in probability in his lifetime, it seems,
that he had not thought of first.  

In fact, L\'evy was slow, in the 1930s, to mention or use the \emph{Grundbegriffe}.  
Barbut, Locker, and Mazliak (2004\nocite{barbut/locker/mazliak:2004}, pp.~155--156)
point out that he seems to have noticed its zero-one law only in early 1936.
Although he followed Fr\'echet in paying lip service to Kolmogorov's axiomatization
(L\'evy 1937, 1949\nocite{levy:1937,levy:1949})
and ruefully acknowledged the effectiveness of Doob's use of it (L\'evy 1954\nocite{levy:1954}),
he never used it in his own research (Schwartz 2001\nocite{schwartz:1997}, p.~82).

A frequently quoted recollection by Mark Kac about his work
with his teacher Hugo Steinhaus can also give
a misleading impression concerning how quickly the \emph{Grundbegriffe} was accepted:
\begin{quotation}
\noindent
   \dots Our work began at a time when probability theory was emerging
   from a century of neglect and was slowly gaining acceptance as a
   respectable branch of pure mathematics.
   This turnabout came as a result of a book by the great Soviet mathematician A.~N. Kolmogorov
   on foundations of probability theory, published in 1933.
   (Kac 1985\nocite{kac:1985}, pp.~48--49)
\end{quotation}
This passage is followed by a sentence that is quoted less frequently:
``It appeared to us awesomely abstract.''  
On another occasion, Kac wrote,
\begin{quotation}
\noindent
    \dots The appearance in 1933 of the book by Kolmogorov on foundations
    of probability theory was little noticed in Lw\'ow, where I was a student.
    I do remember looking at the book, but I was frightened away by its abstractness,
    and it did not seem to have much to do with what I had been working on. 
    I was wrong, of course \dots  (Kac 1982, p.~62)\nocite{kac:1982}\nocite{gani:1982}
\end{quotation}
It is hard to guess from Kac's recollections what his more mature colleagues, 
Steinhaus and Ulam, thought about the importance of the \emph{Grundbegriffe}
at the time.

\subsubsection{First reactions}\label{subsubsec:firstreactions}

The most enthusiastic early reviews of the \emph{Grundbegriffe} 
appeared in 1934:  one by Willy Feller, of Kiel, 
in the \emph{Zentralblatt}\nocite{feller:1934} and one
by Henry~L. Rietz\nocite{rietz:1934}, 
of Iowa,
in the \emph{Bulletin of the American Mathematical Society}.
Feller wrote:
\begin{quotation}
  The calculus of probabilities is constructed axiomatically, with no gaps  
  and in the greatest generality, and for the first time systematically integrated,
  fully and naturally, with abstract measure theory.  The axiom system is certainly
  the simplest imaginable\dots.  The great generality is noteworthy; probabilities
  in infinite dimensional spaces of arbitrary cardinality are dealt with\dots.
  The presentation 
  is very precise, but rather terse, directed to the reader who is not
  unfamiliar with the material.  Measure theory is assumed.
\end{quotation}
Rietz was more cautious, writing that 
``This little book seems to the reviewer to be an 
important contribution directed towards securing a more logical development 
of probability theory.''  

Other early reviews, by the German mathematicians Karl D\"orge (1933)\nocite{dorge:1933} and 
Gustav Doetsch (1935)\nocite{doetsch:1935} were more neutral.
Both D\"orge and Doetsch contrasted Kolmogorov's approach with Tornier's,
which stayed closer to the motivating idea of relative frequency
and so did not rely on the axiom of countable additivity.

Early authors to make explicit use of Kolmogorov's framework included 
the Polish mathematicians Zbigniew {\L}omnicki and Stanis{\l}aw Ulam, 
the Austrian-American mathematician Eberhard Hopf,
and the American mathematician Joseph L. Doob.
We have already mentioned
{\L}omnicki and Ulam's 1934 article on product measures\nocite{lomnicki/ulam:1934}.
Hopf used Kolmogorov's framework in his work on the method of arbitrary functions,
published in English in 1934\nocite{hopf:1934}.
Doob, in two articles in 1934, used the framework in an attempt to make
Ronald Fisher and Harold Hotelling's results
on maximum likelihood\nocite{doob:1934a,doob:1934b} rigorous.

The French probabilists did not take notice so quickly.  
The first acknowledgement may have been in comments by Fr\'echet
in a talk at the International Congress of Mathematicians in Oslo in 1936:
\begin{quotation}
  The foundations of probability theory have changed little.
  But they have been enriched by particulars about the additivity
  of the probabilities of denumerable sets of incompatible events.
  This revision was completely and systematically exposited
  for the first time by A.~Kolmogorov.
  But on this point we mention, on the one hand,
  that one would find the elements of such an exposition
  scattered about in the previous publications of many of the authors who have written
  about the calculus of probabilities in recent years.
  And on the other hand, if this revision was needed,
  it was because of the revolution brought about by \'Emile Borel,
  first in analysis by his notion of measure,
  and then in probability theory by his theory of denumerable probabilities.%
\footnote{These comments are on pp.~153--154 of the 
          \emph{Les math\'ematiques et le concret}, a collection of essays
          that Fr\'echet published in 1955.  The essay in which
          they appear is not in the proceedings of the Oslo meeting, but
          it is identified in the 1955 collection as having been presented
          at that meeting.}
\end{quotation}

\subsubsection{The situation in 1937}

In the years immediately following the publication of the \emph{Grundbegriffe},
the burning issue in the foundation of the probability was the 
soundness and value of von Mises's concept of a Kollektiv.  
Although von Mises had been forced to leave Berlin for Istanbul when 
Hitler took power in 1933, he had continued to argue that any satisfactory
mathematical foundation for probability would have to begin with Kollektivs.  In 1936, 
in the second edition of his \emph{Wahrscheinlichkeit, Statistik und Wahrheit},
he specifically criticized Kolmogorov's way of avoiding them (pp.~124ff).

The rejection of von Mises's viewpoint by most mathematicians, 
especially the French, became clear 
at the colloquium on mathematical probability held 
in Geneva in October 1937 (Wavre 1938--1939\nocite{wavre:1938}).%
\footnote{The colloquium was very broad, both in topics covered and 
          in nationalities represented.  But its location dictated 
          a French point of view.  Borel was the honorary chair.  
          Most of the written contributions, including those by Feller
          and Neyman, were in French.} 
Wald presented his demonstration of the existence of Kollektivs at this
colloquium, and it was accepted.  But the mathematicians
speaking against Kollektivs as a foundation for probability
included Cram\'er, de Finetti, Feller, Fr\'echet, and L\'evy.
No one, it seems, rose to defend von Mises's viewpoint.%
\footnote{Neither von Mises nor Kolmogorov were able to attend the Geneva meeting,
          but both made written contributions,
          including, in von Mises's case, comments on the foundational
          issues.}
Fr\'echet summed up, in a contribution rewritten after the colloquium,
with the words we quoted at the beginning of this article;
the classical axioms, updated by including countable additivity,
remained the best starting point for probability
theory, and Kolmogorov had provided the best exposition.

Willy Feller, now working with Harald Cram\'er in Stockholm, 
was the most enthusiastic adherent of
the \emph{Grundbegriffe} at Geneva.  Nearly 30 years
younger than Fr\'echet, even a few years younger than Kolmogorov himself,
Feller praised the \emph{Grundbegriffe} without stopping to discuss
how much of it had already been formulated by earlier authors.  
Kolmogorov's well known axiomatization was, Feller declared, 
the point of departure for most modern theoretical research in probability
(Feller 1938, p.~8).%
\footnote{Feller's contribution to the Geneva meeting\nocite{feller:1938}, 
          was mainly devoted to reconciling Tornier's theory,
          which followed von Mises in defining probability as a limiting
          relative frequency, with Kolmogorov's axiomatization.
          Feller had collaborated in Kiel with Tornier, who became
          an ardent Nazi (Segal 2003\nocite{segal:2003}, p.~150).
          According to Abraham Frankel (1967\nocite{fraenkel:1967}, p.~155), 
          Tornier forced Feller out of his job by revealing Feller's
          Jewish ancestry.  Doob (1972\nocite{doob:1972}), on the other hand, 
          reports that Feller lost his job by refusing to sign a loyalty oath.
          Feller spent about five
          years with Cram\'er in Stockholm before emigrating to the United
          States in 1939.}
In Feller's view, Kolmogorov had demonstrated the sufficiency of additivity
for probability theory, something that had been doubted not so long before.
General opinion held, Feller said, that Kolmogorov's axiomatization
covered too vast a field.  But could anyone demonstrate the existence of a relation 
between probabilities that the axiomatization did not 
take into account (p.~13)?

Others at the colloquium paid less attention to Kolmogorov's axiomatization.
Jerzy Neyman, for example, prefaced his contribution,
which was concerned with statistical estimation,
with an exposition of the modernized classical theory
that did not mention Kolmogorov\nocite{neyman:1938,wavre:1938}.
He provided his own resolution of the problem of defining
$\mathrm{P}(\mathrm{A}\given\mathrm{B})$\label{p:neyman}
when $\mathrm{B}$ has probability zero.
In a similar article read to the Royal Society in 1937 \citep{neyman:1937},
Neyman did mention the \emph{Grundbegriffe},
calling it
``a systematic outline of the theory of probability based on that of measure'',
but also cited Borel, L\'evy, and Fr\'echet, as well as {\L}omnicki and Ulam.
Rejection of Kollektivs did not, evidently, necessarily mean a thorough
acceptance of Kolmogorov's framework.
Many who rejected Kollektivs could agree
that Kolmogorov had modernized the classical theory
by showing how it becomes abstract measure theory
when countable additivity is added.
But no real use was yet being made of this level of abstraction.

The Italians, for whom Cantelli represented
the modernized classical alternative to von Mises,
paid even less attention to the \emph{Grundbegriffe}.
Throughout the 1930s,
Cantelli debated von Mises, developed his own abstract theory,
and engaged other Italian mathematicians on the issues it raised,
all without citing Kolmogorov
(Regazzini 1987b\nocite{regazzini:1987b}, pp.~15--16).

Especially striking is the absence of mention, even by enthusiasts
like Feller, of Kolmogorov's treatment of conditional probability.
No one, it appears, knew what to do with this idea in 1937.
Neyman did something simpler, and the French generally preferred to 
stick with the classical approach, taking the rule of compound
probability as an axiom (L\'evy 1937\nocite{levy:1937}, Ville 1939\nocite{ville:1939}).
Paul Halmos recalls that Kolmogorov's approach to conditional
probability was puzzling and unconvincing to him
(Halmos 1985\nocite{halmos:1985}, p.~65), and this was probably
the dominant attitude, even among the most able mathematicians.

\subsubsection{Triumph in the textbooks}

The first textbook author to sign on to Kolmogorov's viewpoint
was the Swedish statistician Harald Cram\'er, who was ten years older than
Kolmogorov.  Cram\'er explicitly subscribed to Kolmogorov's system in 
his \emph{Random Variables and 
Probability Distributions}\nocite{cramer:1937}, published in 1937, 
and his \emph{Mathematical Methods of Statistics}\nocite{cramer:1946}, 
written during the second world war and published in 1946.  
In these books, we find the now standard list of axioms for measure, 
expressed in terms of a
set function $P(S)$, and this can be contrasted with the old fashioned 
discussion of the classical rules for probabilities $\mathrm{Pr.}(E)$ that we still find 
in the second edition (1950) of Fr\'echet's treatise
and in the textbook\nocite{fortet:1950}, also published in 1950,
of his student Robert Fortet.
In substance, however, 
Cram\'er made no use of the novelties in the \emph{Grundbegriffe}.  
As a statistician, he had no need
for the strong laws, and so his books made no use of probability distributions 
on infinite-dimensional spaces.  He also had no need for Kolmogorov's treatment
of conditional probability; in 1937 he did not even mention conditional probability,
and in 1946 he treated it in an old-fashioned way.

In the Soviet Union, it was Boris Gnedenko, who became Kolmogorov's
student in 1934, who brought the spirit of the \emph{Grundbegriffe} to
an introductory textbook in probability and statistics
for advanced students.  Gnedenko's\nocite{gnedenko:1950}
Курс теории вероятностей,
published in 1950, treated the strong laws as well as statistics and
followed the \emph{Grundbegriffe} closer than Cram\'er had.
With the translation of the \emph{Grundbegriffe} itself into English in 1950,
the appearance of Lo\`eve's \emph{Probability Theory}\nocite{loeve:1955} in 1955, and 
the translation of Gnedenko's second edition into English in 1962,
Kolmogorov's axioms became ubiquitous in the advanced teaching of probability
in English.

\subsubsection{Making it work}

The irresistibility of an axiomatic presentation, even a superficial one,
for the advanced teaching of probability
was part and parcel of the continuing advance of the spirit of 
Hilbert, which called for the axiomatization of all mathematics
on the basis of set theory.  Driven by this same spirit, a younger generation
of mathematicians undertook, in the late 1930s and on through the war years,
to correct the shortcomings of Kolmogorov's framework.  

Bru (2002, pp.~25--26)\nocite{bru:2002,brissaud:2002} 
recounts the development of this enterprise in the Borel Seminar
in Paris in the late 1930s.  Though officially run 
by Borel, who retained his chair until 1941, the seminar was actually
initiated in 1937 by Ville, Borel's assistant, in the image of 
Karl Menger's seminar in Vienna, where
Ville had developed his ideas on martingales.
The initial theme of the seminar was the same as Menger's theme when
Ville had visited there in 1934--1935: the foundations of the calculus of probabilities.  
It was the younger people in the seminar, especially Ville, 
Wolfgang Doeblin, Robert Fortet, and Michel Lo\`eve, who argued, 
against Borel and L\'evy's resistance, for 
Kolmogorov's axiomatization.  In 1938, the Borel Seminar turned to the
fundamental mathematical issue, which also occupied Doob and
the Russian school:  how to extend the axiomatic theory
to accommodate continuous-time processes.

The French work on Kolmogorov's framework 
continued through and after the war, under the leadership
of Fortet, who was handicapped, 
as Bru explains, by the disdain of Bourbaki:
\begin{quotation}
\noindent
   \dots Bourbaki having decreed that the measurable structure---neither
   sufficiently algebraic nor sufficiently topological---was unacceptable,
   Kolmogorov's axiomatization was hardly taken seriously,
   and one knows how damaging this was for the development of 
   probability theory in France in the fifties and sixties.  Fortet
   felt a certain bitterness about this, though he formulated it
   discreetly and always with humor.
\end{quotation}
The approach favored by Bourbaki can be seen in the treatise
\emph{Radon Measures on Arbitrary Spaces and Cylindrical Measures}\nocite{schwartz:1973},
published in 1973 by Laurent Schwartz,
L\'evy's son-in-law and one of the members
of the Bourbaki group most engaged with probability 
(see also Weil 1940\nocite{weil:1940}, Schwartz 1997\nocite{schwartz:1997}).

It was Doob who finally provided the 
definitive treatment of stochastic processes within the
measure-theoretic framework, in his 
\emph{Stochastic Processes} (1953)\nocite{doob:1953}.  
Not only did Doob show how to construct satisfactory probability spaces
for stochastic processes; he also enriched the abstract framework with 
the concept of a filtration and with a measure-theoretic version of Ville's 
concept of a martingale, and he gave conditions for the existence of
regular conditional distributions.  It would go far beyond the scope of 
this article to analyze Doob's achievement.  Suffice it to say that although
Bourbaki was never persuaded, leaving probability still outside the 
mainstream of pure mathematics, 
L\'evy eventually acknowledged defeat
(in the second edition of his
\emph{Th\'eorie de l'addition des variables al\'eatoires}, 1954)\nocite{levy:1937}, 
and Doob's elaboration of Kolmogorov's framework
became the generally accepted mathematical foundation for probability theory.

We do not know what Kolmogorov thought of Doob's innovations.
In the preface to the second Russian edition of the \emph{Grundbegriffe} (1974),
he acknowledged Doob's innovation with a rather neutral comment:
``Nowadays people prefer to define conditional probability
with respect to an arbitrary $\sigma$-algebra $\mathcal{F}^{\prime}\subseteq\mathcal{F}$''\dots.

\subsection{The evolution of the philosophy of probability}\label{subsec:disappear}

Whereas Kolmogorov's axiomatization became standard,
his philosophy of probability showed little staying power.
We do find a version of his philosophy in a textbook by Cram\'er.
We also find Cournot's principle, in forms advocated by Borel, L\'evy, and Chuprov,
flourishing into the 1950s.
But aside from Cram\'er's imitation,
we find hardly any mention of Kolmogorov's philosophy in the subsequent literature,
and even Cournot's principle faded after the 1950s.

\subsubsection{Cram\'er's adaptation of Kolmogorov's philosophy}\label{subsubsec:risefall}

Harald Cram\'er, who felt fully in tune with Kolmogorov's frequentism,
repeated the key elements of his philosophy in his 1946 book (pp.~148--150).
Cram\'er expressed Kolmogorov's caution that 
the theory of probability applies only under certain conditions by saying that only
certain experiments are random.  In the context of 
a random experiment $\mathfrak{E}$, Cram\'er stated Kolmogorov's
Principle \textbf{A} in this way:
\begin{quotation}
   \emph{Whenever we say that the probability of an event $E$ with respect
   to an experiment $\mathfrak{E}$ is equal to $P$, the concrete meaning
   of this assertion will thus simply be the following:  In a long series
   of repetitions of $\mathfrak{E}$, it is practically certain that the
   frequency of $E$ will be approximately equal to $P$. --- This statement
   will be referred to as the frequency interpretation of the probability $P$.}
\end{quotation}
He stated Kolmogorov's Principle \textbf{B} as a principle applying
to an event whose probability is very small or zero:
\begin{quotation}
   \emph{If $E$ is an event of this type, and if the experiment $\mathfrak{E}$
   is performed one single time, it can thus be considered as practically 
   certain that $E$ will not occur.} --- This particular case of the frequency
   interpretation of a probability will often be applied in the sequel.
\end{quotation}
The final sentence of this passage shows Cram\'er to be a less careful
philosopher than Kolmogorov, for it suggests that 
Principle \textbf{B} is a particular case of Principle \textbf{A}, 
and this is not strictly true.  As we noted 
when discussing Castelnuovo's views in \S\ref{subsubsec:twoforms}, 
the weak form of Cournot's principle is indeed a special case of Principle \textbf{A}.  
But Principle \textbf{B} is the strong form of Cournot's principle, and this is not
merely a special case of Principle \textbf{A}.   

Cram\'er's textbook was influential across the world; it was even translated into 
Russian.  But its influence was not enough to keep Kolmogorov's 
Principles \textbf{A} and \textbf{B} alive.
Gnedenko's textbook emphasized that probability is relative
to a certain complex of conditions $\mathfrak{S}$, but it did not mention the two 
principles.  Aside from Cram\'er's book, 
we do not know of any other textbook or philosophical discussion where
Kolmogorov's formulation of these principles is imitated or discussed.

This disappearance of interest in Kolmogorov's philosophical formulation among
mathematicians working in probability theory 
may be due in part to a lack of interest in philosophical issues altogether.
Doob, for example, explained in a debate
with von Mises in 1941\nocite{doob:1941} that he rejected all philosophical
explication; the meaning of a probability statement should
be left to those who actually use probability in the real world.
It is also of significance, however, that Kolmogorov's philosophical formulation 
quickly encounters difficulties once we follow Doob in taking the
probability space $\Omega$ to be the set of trajectories for a 
stochastic process.  In many applications, contrary to the assumption with
which Kolmogorov begins, the experiment that produces the
trajectory may not be capable of repetition.

\subsubsection{Cournot's principle after the \emph{Grundbegriffe}}\label{subsubsec:cournotafter}

As we explained in \S\ref{subsubsec:french}, Borel and L\'evy had a simpler philosophy 
of probability than
Kolmogorov's; they used only Principle \textbf{B}, Cournot's principle.
Their viewpoint flourished in the first couple of decades after the 
\emph{Grundbegriffe}.

Borel, probably the most influential advocate of the principle,
sharpened his statement of it in the 1940s.
In earlier years, he wrote frequently about the practical meaning of probabilities
very close to zero or one,
but it is hard to discern in these writings the philosophical principle,
which we do find in Hadamard and L\'evy,
that interpreting a very small probability as impossibility
is the unique way of bringing probability theory into contact with the real world.
But in the 1940s, we find the principle articulated very clearly.
In his 1941 book,
\emph{Le jeu, la chance et les th\'eories scientifiques modernes}\nocite{borel:1941},
he calls it the ``fundamental law of chance'' (la loi fondamentale du hasard).
Then, in 1943, on the first page of the text of his ``Que sais-je?''\ volume,
\emph{Les probabilit\'es et la vie}\nocite{borel:1943},
he finally coined the name he used thereafter:
``the only law of chance'' (la loi unique du hasard).
This name appears again in the 1948 edition of \emph{Le Hasard}
and the 1950 edition of \emph{\'El\'ements de la th\'eorie des probabilit\'es}
(see also Borel 1950\nocite{borel:1950}).
It was also popularized by Robert Fortet, in his essay in Fran\c{c}ois Le Lionnais's 
\emph{Les grands courants de la pens\'ee math\'ematique}\nocite{lionnais:1948},
first published in 1948\nocite{fortet:1948,lionnais:1948}.

Borel's books from this period, as well as Le Lionnais's,
were translated into English.
They may have influenced Marshall Stone,
one of the few American mathematicians to echo the idea that probability theory
makes contact with reality only by the prediction that events of small probability
will not happen (Stone 1957)\nocite{stone:1957}.
Per Martin-L\"of has told us that he also learned the idea from reading Borel
(see Martin-L\"of 1969\nocite{martinlof:1969a}, p.~616;
1966--1967, p.~9\nocite{martinlof:1966};
1969--1970, p.~8\nocite{martinlof:1969}).

Borel never used the name ``Cournot's principle''; nor did L\'evy or Kolmogorov.
The name seems to be due to Fr\'echet,
but the reference to Cournot can be traced back to Chuprov,
who considered Cournot the founder of the philosophy of modern statistics
(Sheynin 1996\nocite{sheynin:1996}, p.~86).
As we mentioned earlier, Chuprov called the principle Cournot's ``lemma''.
This enthusiasm for Cournot and the principle was brought from Russian 
into German by Chuprov's student, Oskar Anderson, who spent the 1930s in Sofia and 
then moved to Munich in 1942.
Anderson called the principle the ``Cournotsche Lemma''
or the ``Cournotsche Br\"ucke''---a bridge between mathematics and the
world.  We find both phrases already in Anderson's 1935 book\nocite{anderson:1935,anderson:1954},
but the book may have been less influential than an article Anderson contributed to 
a special issue of the Swiss philosophy journal
\emph{Dialectica}\nocite{anderson:1949,borel:1949,levy:1949}
in 1949, alongside articles by Borel and L\'evy revisiting their versions of
Cournot's principle.  Fr\'echet took these
articles as one of his themes in his presiding 
report at the session on probability at the 
Congr\`es International de Philosophie des Sciences\nocite{bayer:1951} at Paris that same year
(Fr\'echet 1951\nocite{frechet:1951}), 
where he accepted the attribution of the principle to Cournot (``bien qu'il
semble avoir \'et\'e d\'ej\`a plus ou moins nettement indiqu\'e par d'Alembert'')
but suggested the appellation ``principe de Cournot'', 
reserving ``lemma'' as a purely mathematical term. 
It was normal for Fr\'echet to legislate on terminology; 
from 1944 to 1948 he had
led the effort by the Association Fran\c{c}aise de Normalisation
to standardize probability terminology and notation, 
putting in place appellations such as Borel-Cantelli, Kolmogorov-Smirnov, 
etc.\ (Bru 2003b\nocite{bru:2003b}, Pallez 1949\nocite{pallez:1949}).
Fr\'echet had second thoughts about giving so much credit to
Cournot; when he reprinted his 1949 report as a chapter in a book in
1955\nocite{frechet:1955}, he replaced
``principe de Cournot'' with ``principe de Buffon-Cournot''.  
But here no one else appears to have followed his example.  

Both Anderson and the 
Dutch mathematical statistician David Van Dantzig argued for using
Cournot's principle as the foundation for statistical testing:
Anderson in \emph{Dialectica} (Anderson 1949\nocite{anderson:1949}), 
and Van Dantzig at the meeting in Paris
(Van Dantzig 1951\nocite{vandantzig:1951}).
Neyman found this view of statistical testing incomprehensible; at the same meeting in Paris he 
said Anderson was the ``only contemporary author I know who seems 
to believe that the inversion of the
theorem of Bernoulli is possible'' (Neyman 1951, p.~90)\nocite{neyman:1951}.
The German mathematical statistician Hans Richter, also in Munich,
emphasized Cournot's principle 
in his own contributions to \emph{Dialectica} 
(Richter 1954; von Hirsch 1954)\nocite{richter:1954,vonhirsch:1954}
and in his probability textbook (Richter 1956)\nocite{richter:1956}, which
helped bring Kolmogorov's axioms to students in postwar Germany.
As a result of Richter's book, the name ``Cournotsche Prinzip'' is fairly
widely known among probabilists in Germany.

The name ``Cournot's principle'' was brought into English
by Bruno de Finetti (1951)\nocite{definetti:1951}, 
who ridiculed it in several different languages as 
``the so-called principle of Cournot''.  De Finetti participated in
the 1949 Paris conference, and he presumably remembered that
Fr\'echet had coined the phrase.  His disdain for the name
may have been influenced, however, by the similarity between the
names Cournot and Carnot.  When mentioning the
principle in his 1970 book\nocite{definetti:1970} (p.~181), 
de Finetti cites a note, dating back to 1930\nocite{borel:1930}, 
in which Borel mentions ``Carnot's principle''---the second law
of thermodynamics---as an example where a small probability should be
interpreted as an impossibility.  In spite of his tone, de Finetti
has his own way of making sense of the principle.
As he explained in a note written in 1951\nocite{definetti:1955} for 
Fr\'echet's \emph{Les math\'ematiques et le concret}, 
he did not really disagree with the statement that one should act as if an 
event with a very small probability should not happen.  Rather he took the
principle as a tautology, a consequence of the subjective definition of probability, not 
a principle standing outside probability theory and relating it to the real
world (de Finetti 1955, p.~235; see also Dawid 2004\nocite{Dawid:2004}).

\subsubsection{The new philosophy of probability}

Why did no one discuss Kolmogorov's philosophy after Cram\'er, and why did 
discussion even of Cournot's principle fade after the 1950s?

Those who do not consider Cournot's principle useful or coherent may consider
this explanation enough for its fading, but there are also sociological 
explanations.  The discussions of Cournot's principle that we have 
just reviewed were discussions by mathematicians.  But since the publication
of the \emph{Grundbegriffe}, philosophy has been more and more a matter
for professionals.  As we noted in \S\ref{subsubsec:britger}, this
was already true in Germany before the \emph{Grundbegriffe}.
German and Austrian philosophers had worked on probability for 
over 50 years, from the 1880s to the 1930s; they had endlessly debated
subjective and objective interpretations, the meaning of possibility, and the relation of 
probability to logic and to induction.  This tradition was rich 
and autonomous; it had left the mathematicians and their preoccupations 
behind.  In particular, since von Kries it had had little truck
with Cournot's principle.

Because of its mathematical austerity, the \emph{Grundbegriffe} was scarcely
noticed by philosophers when it appeared. 
According to Ville (1985)\nocite{ville:1985}, it had
received little attention 
at the Menger seminar in Vienna, the mathematical satellite of the
Vienna circle, where Popper and other philosophers rubbed shoulders with 
mathematicians such as 
Carath\'eodory, G\"odel, von Mises, and Wald (Hacohen 2000\nocite{hacohen:2000}).

When Kolmogorov finally is mentioned in the philosophical literature, 
in Ernest Nagel's \emph{Principles of the Theory of Probability} in 
1939\nocite{nagel:1939}, he is merely listed as one of many
mathematicians who have given axioms for probability.  No mention is made of the
philosophy he spelled out in the \emph{Grundbegriffe}.
For Nagel, Kolmogorov's work was pure mathematics, and it was his job to propose different 
interpretations of it---frequentist,
subjectivist, or whatever (Nagel 1939, pp.~40--41):
\begin{quotation}
\noindent
  \dots Abstract sets of postulates for probability have been given by
  Borel, Cantelli, Kolmogoroff, Popper, Reichenbach, and several other writers. \dots
  From an abstract mathematical point of view, the probability calculus is a chapter 
  in the general theory of measurable functions, so that the mathematical theory
  of probability is intimately allied with abstract point-set theory.  This aspect
  of the subject is under active investigation and has been especially cultivated
  by a large number of French, Italian, Polish, and Russian mathematicians. 
\end{quotation}
Nagel then listed nine possible interpretations.
Kolmogorov would have agreed, of course, that his axioms have many interpretations.
He said this on p.~1 of the \emph{Grundbegriffe}:
\begin{quotation}
  As we know, every axiomatic (abstract) theory can be interpreted in an unlimited
  number of ways.  The mathematical theory of probability, in particular, has numerous 
  interpretations in addition to those from which it grew.  Thus we find 
  applications of the mathematical theory of probability to areas of research that 
  have nothing to do with the ideas of chance and probability in their concrete senses.%
\footnote{The next sentence states that an appendix is devoted to such applications,
          but there is no such appendix, and the sentence was omitted from the 1936 translation
          into Russian and subsequently from the 1950 translation into English.}
 \end{quotation}
But for Kolmogorov, the concrete sense of chance and probability was frequentist.  
The other applications were about something else
(areas or volumes, for example), not probability.

The philosophers of probability most prominent in the United States
after the second world war, Carnap and Reichenbach,
were themselves refugees from Germany and Austria,
even more steeped in the German tradition.
They believed that they had new things to say,
but this was because they had new philosophical tools,
not because they had new things to learn from mathematicians.  
As a consequence, the philosophical tradition lost what the mathematicians of 
Kolmogorov's generation had learned about how to make sense of frequentism.  
They lost Borel's, L\'evy's
and Kolmogorov's understanding of Cournot's principle, and they also missed
Ville's insight into the concept of a Kollektiv.  

The one post-war philosopher who might have been expected to make 
Cournot's principle central to his philosophy of probability was Karl Popper.
Popper taught that in general scientific theories 
make contact with reality by providing opportunities for falsification.
Cournot's principle tells us how to find such opportunities with
a probabilistic theory:
single out an event of very small probability and see if it happens.

Popper is in fact one of the few English-speaking post-war philosophers who discussed 
Cournot's principle, and on the whole he seems to have subscribed to it.
This already seems clear, for example, in 
\S68 of his celebrated \emph{Logik der Forschung}\nocite{popper:1935} (1935). 
The picture is muddied, however, by Popper's own youthful and enduring ambition to 
make his mark by axiomatizing probability.  He had tried his hand at this
in 1938, before he was even aware of Kolmogorov's work
(Popper 1938)\nocite{popper:1938}.
By 1959, when he published the expanded English edition of \emph{Logik der Forschung},
under the title \emph{The Logic of Scientific Discovery},
he knew the importance that Kolmogorov's axioms had assumed,
and he included an extensive discussion of the axioms themselves,
finding reasons to disagree with them and propose alternatives.
This makes it difficult to keep track of the issue of interpretation
(see, however, the footnote on p.~191).  When he returns to the issue
in \emph{Realism and the Aim of Science}\nocite{popper:1983} in 1983,
he speaks explicitly about Cournot's principle,
but again his views are very complex (see especially p.~379).

\subsubsection{Kolmogorov's reticence}

The scant attention paid to Kolmogorov's own philosophy
is due in part to his own reticence.
Given his great prestige, people would have paid attention to
if he had written about it at length, but he did not do so.
In the Soviet Union, it would have been very dangerous to do so.

Kolmogorov's discussion of theoretical statistics at the Tashkent conference in 1948
(Kolmogorov 1948b\nocite{kolmogorov:1948b})
shows that he was interested in the controversies about
Bayesian, fiducial, and confidence methods
and that he wanted to develop a predictive approach.
He was careful, however, to treat statistical theory as a branch of mathematics
rather than a philosophical enterprise using mathematics as a tool,
and this did not permit him to engage in dialogue with the schools of thought that
were debating each other in the West.

Stalin's death led to only slightly greater breathing space for mathematical statistics.
The crucial step was a conference held jointly in 1954 by the Academy
of Sciences, the Central Statistical Bureau, and the Ministry of Higher Education.
At this meeting, Kolmogorov was still careful to criticize the notion that there
is a universal general theory of statistics; all that is needed, he said,
is mathematical statistics, together with some techniques for collecting and processing data.
But there are some problems, he said, in domains such as
physics, actuarial work, and sampling,
in which probability can legitimately be applied to statistics
(Kolmogorov 1954\nocite{kolmogorov:1954,anonymous:1954};
Kotz 1965; Lorentz 2002\nocite{kotz:1965,lorentz:2002};
Sheynin 1996, p.~102\nocite{sheynin:1996}).
This formulation permitted him to publish a still very concise exposition
of his frequentist viewpoint, which appeared in English,
as a chapter in \emph{Mathematics, Its Content, Methods, and Value} 
(1956)\nocite{kolmogorov:1956,aleksandrov/etal:1956}.

In the 1960s, when Kolmogorov did return to the foundations of probability
(Kolmogorov 1963\nocite{kolmogorov:1963}, 1965\nocite{kolmogorov:1965},
1968\nocite{kolmogorov:1968}),
he still emphasized topics far from politics and philosophy:  
he was applying information theory to linguistics and the analysis of literature.
The definition of randomness he produced used new ideas concerning complexity
but was inspired by the idea of a finitary Kollektiv,
which he had previously thought impossible to treat mathematically
(Li and Vit\'anyi 1997, 
Vovk and Shafer 2003)\nocite{li/vitanyi:1997,vovk/shafer:2003}.
It could be said that the new definition brought Cournot's principle
back in a new and perhaps more convincing form.
Instead of ruling out events of small probability 
just because they are singled out in advance,
we rule out sets of outcomes that have simple enough a description
that we can in fact single them out in advance.
But Kolmogorov said nothing this philosophical.
Vladimir Arnol'd tells us that like most people who had lived through Stalin's terror,
Kolmogorov was afraid of the authorities until his last day
(Arnol'd 1993, p.~92)\nocite{arnold:1993,shiryaev:1993,zdravkovska/duren:1993}.

Kolmogorov's reticence complemented Doob's explicit rejection of
philosophy, helping to paint for many 
in the West a picture of mathematical statistics free from 
philosophy's perils and
confusions.
In fact he did have a philosophy of probability---the one he presented in the 
\emph{Grundbegriffe}.

\section{Conclusion}

The great achievement of the \emph{Grundbegriffe} was to seize 
the notion of probability from the classical probabilists.
In doing so, Kolmogorov made space for a mathematical theory that
has flourished for seven decades, driving
ever more sophisticated statistical methodology and
feeding back into other areas of pure mathematics.
He also made space, within the Soviet Union, for a theory of 
mathematical statistics that could prosper without entanglement
in the vagaries of political or philosophical controversy.

Kolmogorov's way of relating his axioms to the world has received much 
less attention than the axioms themselves.  But Cournot's principles
has re-emerged in new guises, as the information-theoretic and 
game-theoretic aspects of probability have come into their own.
We will be able to see this evolution as growth---rather than mere
decay or foolishness---only if we can see the \emph{Grundbegriffe}
as a product of its own time.

\appendix

\section{Appendix}

\subsection{Ulam's contribution to the Z\"urich Congress}\label{app:ulam}

Stanis{\l}aw Ulam's contribution to the International Congress of Mathematicians
in Z\"urich (Ulam 1932)\nocite{ulam:1932}, was entitled 
``Zum Massbegriffe in Produktr\"aumen''.
We here provide a translation into English.
We have updated the notation slightly,
using $\{x\}$ instead of $(x)$ for the set containing a single element $x$,
$\cup$ instead of $\sum$ for set union,
$\emptyset$ instead of $0$ for the empty set,
and $x_1,x_2,\dots$ instead of $\{x_i\}$ for a sequence.

The article by {\L}omnicki and Ulam appeared in 1934,
in Volume 23 of \emph{Fundamenta Mathematicae}
rather than in Volume 20 as they had hoped.

\begin{center}
\textbf{On the idea of measure in product spaces}

By St. Ulam, Lw\'ow
\end{center}

The content of the report consists of the presentation of some results that 
were obtained in collaboration with Mr.~Z.~Lomnicki.

Let $X,Y$ be two abstract spaces,
on which a measure $m(M)$ is defined for certain sets $M$ (the measurable sets).
No topological or group-theoretic assumption is made
about the nature of the spaces.
Now a measure will be introduced on the space $X \times Y$
(i.e., the space of all ordered pairs $(x,y)$,
where $x \in X$ and $y \in Y$).
This corresponds to problems in probability,
which are concerned with finding probabilities of combined events
from probabilities of individual (independent) events.

We make the following assumptions about the measure on $X$ (and analogously on $Y$)
and the class $\mathfrak{M}$ of measurable sets there:
\begin{enumerate}\renewcommand{\labelenumi}{\Roman{enumi}.}
   \item
     The whole space $X$ is measurable: $X\in \mathfrak{M}$.
     Also the singleton set $\{x\}$ is in $\mathfrak{M}$ for each $\{x\} \in X$.
   \item
     From $M_i \in \mathfrak{M}$ for $i=1,2,\dots$,
     it follows that $\cup_{i=1}^{\infty} M_i \in \mathfrak{M}$.
   \item
     From $M,N \in \mathfrak{M}$, it follows that $M \setminus N \in \mathfrak{M}$.
   \item
     If $M \in \mathfrak{M}$, $m(M) = 0$, and $N \subset M$,
     then $N$ is also in $\mathfrak{M}$.
\end{enumerate}
\begin{enumerate}
   \item
     $m(X) = 1$; $m(M) \ge 0$.
   \item
     $m(\cup_{i=1}^{\infty} M_i) = \sum_{i=1}^{\infty} m(M_i)$
     when $M_i \cap M_j = \emptyset$ for $i \ne j$.
   \item
     From $m(M) = 0$ and $N \subset M$, it follows that $m(N) = 0$.\footnote%
       {This condition follows from the previous one and was omitted from the 1934 article.}
\end{enumerate}

With these assumptions, we obtain

\textit{Theorem.}  One can introduce a measure on the space $X \times Y$ such that 
all sets of the form $M \times N$ are measurable and have the measure $m(M) \cdot m(N)$
(here $M$ and $N$ denote measurable subsets of $X$ and $Y$, respectively) and 
all our assumptions remain satisfied.

An analogous theorem holds for a countable product---i.e., a set
$$
  \prod X_i = X_1 \times X_2 \times \cdots
$$
consisting of all sequences $x_1,x_2,\dots$, where $x_i \in X_i$.

Some of the properties of these measures will be studied
in connection with the formulation of questions in probability theory.
A detailed presentation will appear shortly in \emph{Fundamenta Mathematicae}
(presumably Vol.~20).

\subsection{A letter from Kolmogorov to Fr\'echet}\label{app:kolfr}

The Fr\'echet papers in the archives of the Academy of Sciences in Paris include a letter
in French to Fr\'echet, in which Kolmogorov elaborates briefly on his philosophy of probability.
Here is a translation.

\begin{center}
  Moscow 6, Staropimenovsky per.\ 8, flat 5

3 August 1939
\end{center}

\noindent
Dear Mr.~Fr\'echet,

I thank you sincerely for sending the proceedings of the Geneva Colloquium,
which arrived during my absence from Moscow in July.

The conclusions you express on pp.~51--54
are in full agreement with what I said in the introduction to my book:
\begin{quotation}
  In the pertinent mathematical circles it has been common for some time to 
  construct probability theory in accordance with this general point of view.
  But a complete presentation of the whole system, free from superfluous complications,
  has been missing\dots
\end{quotation}

You are also right to attribute to me (on p.~42) the opinion that the formal axiomatization
should be accompanied by an analysis of its real meaning.
Such an analysis is given, perhaps too briefly,
in the section ``The relation to the world of experience'' in my book.
Here I insist on the view, expressed by Mr.~von Mises himself
(Wahrscheinlickeitsrechnung 1931, pp.~21--26),
that ``collectives'' are \underline{finite} (though very large) in real practice.

One can therefore imagine three theories:
\begin{enumerate}\renewcommand{\labelenumi}{\Alph{enumi}}
   \item
     A theory based on the notions of ``very large'' finite ``collectives'', ``approximate''
     stability of frequencies, etc.  This theory uses ideas that cannot be defined in a
     purely formal (i.e., mathematical) manner,
     but it is the only one to reflect experience truthfully.
   \item
     A theory based on infinite collectives and limits of frequencies.
     After Mr.~Wald's work we know that this theory can be developed
     in a purely formal way without contradictions.
     But in this case its relation to experience cannot have any different nature than for 
     any other axiomatic theory.
     So in agreement with Mr.~von Mises, we should regard theory B
     as a certain ``mathematical idealization'' of theory A.
   \item
     An axiomatic theory of the sort proposed in my book.
     Its practical value can be deduced directly from the ``approximate'' theory A
     without appealing to theory B.
     This is the procedure that seems simplest to me.
\end{enumerate}

\noindent
Yours cordially,

\noindent
A. Kolmogoroff

\subsection{A closer look at L\'evy's example}\label{app:garb}

Corresponding to the partition $\mathfrak{A}$ considered by L\'evy is
the $\sigma$-algebra $\GGG$ consisting 
of the Borel sets $E$ in $[0,1]$ such that
\begin{equation}\label{eq:shift}
  \left.
    \begin{array}{r}
      x \in E\\
      x'-x \mbox{ is rational}
    \end{array}
  \right\}
  \Longrightarrow
  x' \in E.
\end{equation}
Each of L\'evy's $C(x)$ is in $\GGG$, and conditional expectation
with respect to the partition $\mathfrak{A}$ in Kolmogorov's sense
is the same as conditional expectation with respect to $\GGG$
in Doob's sense.

\begin{lemma}\label{lem:1}
  For any rational number $a=k/n$
  ($k$ and $n$ are integers, $k\ge0$, and $n\ge1$),
  $$
    \Prob([0,a] \given \GGG)
    =
    a
    \quad
    \mbox{a.s.},
  $$
  where $\Prob$ is uniform on $[0,1]$.
\end{lemma}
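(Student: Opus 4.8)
The plan is to verify the defining property of conditional expectation directly for the constant candidate $z\equiv a$. By the Radon--Nikodym characterization used in \eqref{eq:nikodym}, $\Prob([0,a]\given\GGG)$ is, up to a null set, the unique $\GGG$-measurable random variable $z$ satisfying $\int_G z\,d\Prob=\Prob([0,a]\cap G)$ for every $G\in\GGG$. Since a constant is trivially $\GGG$-measurable, proving the lemma amounts to establishing the single measure identity
$$
  \Prob([0,a]\cap G)=a\,\Prob(G)\qquad\text{for every }G\in\GGG .
$$
Everything then turns on exploiting the rational-shift invariance \eqref{eq:shift} built into $\GGG$.

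First I would introduce the rotation $T\colon x\mapsto x+\tfrac1n\pmod1$ of $[0,1]$, which preserves the uniform measure $\Prob$. The crucial observation is that every $G\in\GGG$ satisfies $T(G)\subseteq G$: for $x\in G$ the image $T(x)$ lies in $[0,1]$ and differs from $x$ by $\tfrac1n$ or by $\tfrac1n-1$, both rational, so \eqref{eq:shift} forces $T(x)\in G$. Since $T$ is a measure-preserving bijection of $[0,1]$ apart from its null set of boundary points, this inclusion is an equality modulo a null set.

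Next I would partition $[0,1]$ into the $n$ half-open intervals $I_j=[j/n,(j+1)/n)$ for $j=0,\dots,n-1$, which $T$ cyclically permutes, $T(I_j)=I_{(j+1)\bmod n}$ modulo a null set. Because $T$ is measure-preserving and maps $G\cap I_j$ onto $G\cap I_{(j+1)\bmod n}$, all the quantities $\Prob(G\cap I_j)$ coincide; as the $I_j$ partition $[0,1]$ this forces $\Prob(G\cap I_j)=\Prob(G)/n$ for each $j$. Writing $a=k/n$ and $[0,a]=\bigcup_{j=0}^{k-1}I_j$ up to endpoints, summing over $j$ gives $\Prob([0,a]\cap G)=k\,\Prob(G)/n=a\,\Prob(G)$, the desired identity; the degenerate cases $k=0$ and $k=n$ are immediate.

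The routine ingredients---measure-preservation of $T$ and disjointness of the $I_j$---are standard. The step I regard as the crux is the invariance $T(G)=G$: one must confirm that the mod-$1$ wrap-around only ever shifts a point by a rational amount, so that \eqref{eq:shift} genuinely applies, and that the boundary points lost under $T$ are negligible, so that the symmetry argument can be run at the level of measures rather than of sets.
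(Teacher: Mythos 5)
Your proof is correct and follows essentially the same route as the paper's: both exploit the invariance of $G\in\GGG$ under the shift $x\mapsto x+\tfrac1n \pmod 1$ to show that $G$ meets each of the $n$ intervals $\bigl[\tfrac{j}{n},\tfrac{j+1}{n}\bigr]$ in equal measure, and then sum the first $k$ pieces. The paper states this more tersely (calling the sets $E\cap[\tfrac{j}{n},\tfrac{j+1}{n}]$ ``congruent''), while you spell out the measure-preserving rotation and the null-set bookkeeping, but the argument is the same.
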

\begin{proof}
  We must show that
  \begin{equation}\label{eq:1}
    \int_E
      a
    dx
    =
    \Prob([0,a]\cap E)
  \end{equation}
  for all $E \in \GGG$.
  Consider the sets
  \begin{align*}
    E_1 &:= E \cap \left[ 0, \frac{1}{n}\right],\\
    E_2 &:= E \cap \left[ \frac{1}{n}, \frac{2}{n}\right],\\
        &\vdots\\
    E_n &:= E \cap \left[ \frac{n-1}{n}, 1\right].
  \end{align*}
  Since $E$ is invariant with respect to adding $\frac{1}{n}$ (modulo 1),
  these sets are congruent.
  So the left-hand side of~(\ref{eq:1}) is $an\Prob(E_1)$
  while the right-hand side is $k\Prob(E_1)$;
  since $k = an$, these are equal.
  \qedtext
\end{proof}

\begin{lemma}\label{lem:2}
 For any bounded Borel function $f:[0,1]\to\bbbr$,
  \begin{equation}\label{eq:2}
    \Expect(f\given\GGG)
    =
    \Expect(f)
    \quad
    \mbox{a.s.}
  \end{equation}
\end{lemma}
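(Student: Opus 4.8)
The plan is to use Lemma~\ref{lem:1} as the base case of a standard extension argument (the monotone-class, or Dynkin $\pi$--$\lambda$, method), pushing the identity from the indicators of rational initial segments up to every bounded Borel function. The structural reason the conclusion holds is that $\GGG$ is trivial modulo $\Prob$-null sets---it is the invariant $\sigma$-algebra of the (ergodic) action of rational rotations on $[0,1]$---so conditioning on it cannot alter any expectation; but I would obtain this as a consequence of the extension rather than assume it.

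First I would restate Lemma~\ref{lem:1} as $\Prob([0,a]\given\GGG)=\Prob([0,a])$ a.s.\ for every rational $a$, using $\Prob([0,a])=a$. By linearity of conditional expectation, together with the fact that the intersection of two a.s.\ events is a.s., this extends to $\Prob([a,b]\given\GGG)=\Prob([a,b])$ a.s.\ for each interval with rational endpoints. Let $\mathcal{L}$ denote the class of Borel sets $E\subseteq[0,1]$ for which $\Prob(E\given\GGG)=\Prob(E)$ a.s. The intervals with rational endpoints form a $\pi$-system generating the Borel $\sigma$-algebra, and they all lie in $\mathcal{L}$.

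Next I would verify that $\mathcal{L}$ is a $\lambda$-system: it contains $[0,1]$; it is closed under proper differences by linearity (once more using that two a.s.\ identities hold simultaneously a.s.); and it is closed under increasing countable unions by the conditional monotone convergence theorem, since for $A_n\uparrow A$ one has $\Prob(A_n\given\GGG)\uparrow\Prob(A\given\GGG)$ a.s.\ while $\Prob(A_n)\to\Prob(A)$, discarding only countably many null sets. Dynkin's theorem then yields $\mathcal{L}=$ all Borel sets, i.e.\ $\Prob(E\given\GGG)=\Prob(E)$ a.s.\ for every Borel $E$. Linearity promotes this to $\Expect(s\given\GGG)=\Expect(s)$ a.s.\ for every Borel simple function $s$; and since $f$ is bounded it is a uniform limit of such functions $s_n$, so $|\Expect(s_n\given\GGG)-\Expect(f\given\GGG)|\le\|s_n-f\|_\infty\to0$ a.s.\ while $\Expect(s_n)\to\Expect(f)$, giving~(\ref{eq:2}).

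The point to watch is purely measure-theoretic bookkeeping: each equality in $\mathcal{L}$ holds only off a $\Prob$-null set depending on the set or function in question, and a naive union of exceptional sets over the uncountably many Borel sets need not be null. The $\lambda$-system formulation is exactly what controls this, since every closure step---differences, increasing limits, and the final uniform approximation of $f$---invokes only countably many of the a.s.\ identities at once, so the discarded null sets never accumulate beyond a single null set. I expect this accounting, rather than any hard analysis, to be the only delicate part of the argument.
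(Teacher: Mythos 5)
Your proof is correct, but it takes a genuinely different route from the paper's. The paper reduces the general bounded Borel case to the continuous case via Luzin's theorem (citing Kolmogorov and Fomin): choose a continuous $g$ with $\Prob\{x \st f(x)\ne g(x)\}<\epsilon$ and $\|g\|_\infty\le\|f\|_\infty$, note that both sides of $\int_E f\,dx=\Expect(f)\Prob(E)$ change by at most $2\|f\|_\infty\epsilon$, and then verify the identity for continuous functions directly from Lemma~\ref{lem:1} (by uniform approximation with step functions having rational breakpoints). You instead run the standard Dynkin $\pi$--$\lambda$ machine: the rational intervals of Lemma~\ref{lem:1} form a generating $\pi$-system, the class of Borel sets $E$ with $\Prob(E\given\GGG)=\Prob(E)$ a.s.\ is a $\lambda$-system (by linearity and conditional monotone convergence), so the identity holds for all Borel sets, hence for simple functions, hence for uniform limits of simple functions. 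Both arguments rest entirely on Lemma~\ref{lem:1}; yours is more self-contained (no appeal to Luzin's theorem) and makes explicit the null-set bookkeeping that the paper's two-line proof leaves implicit, at the cost of being longer, while the paper's version is terser but delegates the continuous case to the reader. One small point worth stating explicitly in your write-up: passing from $[0,a]$ to half-open or two-sided rational intervals uses that a singleton has conditional probability zero a.s., which is immediate but is a step you silently absorb into ``linearity.''
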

\begin{proof}
  Equation~(\ref{eq:2}) means that
  \begin{equation}\label{eq:3}
    \int_E f dx
    =
    \Expect(f) \Prob(E)
  \end{equation}
  for any $E\in\GGG$.
  By Luzin's theorem \citep{kolmogorov/fomin:1999},
  for any $\epsilon>0$ there exists a continuous function $g:[0,1]\to\bbbr$ such 
that
  $$
    \Prob\{x\st f(x)\ne g(x)\} < \epsilon;
  $$
  so it suffices to prove (\ref{eq:3}) for continuous functions.
  We can easily do this using Lemma~\ref{lem:1}.
  \qedtext
\end{proof}

As a special case of Lemma~\ref{lem:2}, we have
  $$
    \Prob(B\given\GGG)
    =
    \Prob(B)
    \quad
    \mbox{a.s.}
  $$
for any Borel set $B\subseteq[0,1]$.  In other words, we can take 
$X$'s conditional distribution in L\'evy's example to be uniform, just like
its unconditional distribution, no matter which $C(x)$ we condition
on.  This is exceedingly unnatural, because the uniform distribution gives
the set on which we are conditioning probability zero.

Theorem~II.(89.1) of Rogers and Williams (1995\nocite{rogers/williams:2000}, 
p.~219) tells us that if a $\sigma$-algebra $\mathfrak{G}$ is
countably generated, then conditional probabilities with respect
to $\GGG$ will almost surely 
give the event on which we are conditioning
probability one and will be equally well behaved in many other respects.
But the $\sigma$-algebra $\mathfrak{G}$ characterized 
by~(\ref{eq:shift}) is not countably generated.

\subsection*{Acknowledgments}

This article expands on part of Chapter~2 of Shafer and Vovk 2001\nocite{shafer/vovk:2001}.
Shafer's research was partially supported by NSF grant SES-9819116 to Rutgers University.
Vovk's research was partially supported by EPSRC grant~GR/R46670/01,
BBSRC grant 111/BIO14428, EU grant~IST-1999-10226,
and MRC grant S505/65 to Royal Holloway, University of London.

We have benefited from conversation 
and correspondence with Bernard Bru, Pierre Cr\'epel, Elyse Gustafson, Sam Kotz,
Steffen Lauritzen, Per Martin-L\"of, 
Thierry Martin, Laurent Mazliak, Paul Miranti, Julie Norton, Nell Painter,
Goran Peskir, Andrzej Ruszczynski, Oscar Sheynin, J.~Laurie Snell,
Stephen M. Stigler, and Jan von Plato.  Bernard Bru gave us an advance look at
Bru 2003a\nocite{bru:2003a}
and provided many helpful comments and insightful judgments, some
of which we may have echoed without adequate acknowledgement.
Oscar Sheynin was also exceptionally helpful, providing many useful
insights as well as direct access to his extensive translations,
which are not yet available in a United States library.

Several people have helped us locate sources.
Vladimir V'yugin helped us locate
the original text of Kolmogorov 1929\nocite{kolmogorov:1929},
and Aleksandr Shen' gave us a copy of the 1936 translation of
the \emph{Grundbegriffe} into Russian.  
Natalie Borisovets, at Rutgers's Dana Library, and Mitchell Brown, at Princeton's Fine Library,
have both been exceedingly helpful in locating other references.

We take full responsibility for our translations into English
of various passages from French, German, Italian, and Russian,
but in some cases we were able to consult previous translations.

Although this article is based almost entirely on published sources,
extensive archival material is available to 
those interested in further investigation.  
The Accademia dei Lincei at Rome has a Castelnuovo archive
(Gario 2001\nocite{gario:2001}).
There is an extensive Fr\'echet archive at 
the Acad\'emie des Sciences in Paris.
L\'evy's papers were lost when his Paris apartment was ransacked by the Nazis,
but his extant correspondence includes letters
exchanged with Fr\'echet (Barbut, Locker, and Mazliak 2004\nocite{barbut/locker/mazliak:2004}),
Kai Lai Chung (in Chung's possession),
and Michel Lo\`eve (in Bernard Bru's possession).
Additional correspondence of L\'evy's
is in the library of the University of Paris at Jussieu
and in the possession of his family.
The material at Kolmogorov and Aleksandrov's country home at Komarovka
is being cataloged under the direction of Albert N. Shiryaev.
Doob's papers, put into order by Doob himself,
are accessible to the public at the University of Illinois, Champagne-Urbana.

Volumes 1--195 of the \emph{Comptes rendus hebdomadaires des s\'eances de l'Acad\'emie des Sciences}
(1885--1932) are available free on-line at the Biblioth\`eque Nationale de 
France (http://gallica.bnf.fr/).

\subsection*{Lifespans}

\noindent
George Biddell Airy (1801--1892)

\noindent
\hangindent=\parindent
Aleksandr Danilovich Aleksandrov (1912--1999)
(Александр Данилович Александров)

\noindent
Pavel Sergeevich Aleksandrov (1896--1982)
(Павел Сергеевич Александров)

\noindent
Erik Sparre Andersen (1919--2003)

\noindent
Oskar Johann Victor Anderson (1887--1960)
(Оскар Николаевич Андерсон)

\noindent
Vladimir Igorevich Arnol'd (born 1937)
(Владимир Игоревич Арнольд)

\noindent
Louis Jean-Baptiste Alphonse Bachelier (1870--1946)

\noindent
Stefan Banach (1892--1945)

\noindent
Marc Barbut (born 1928)

\noindent
Maya Bar-Hillel

\noindent
I. Alfred Barnett (1894--1975)

\noindent
Jack Barone

\noindent
Maurice Stephenson Bartlett (1910--2002)

\noindent
Grigory Minkelevich Bavli (1908--1941)
(Григорий Минкелевич Бавли)

\noindent
Raymond Bayer (born 1898)

\noindent
Margherita Benzi (born 1957)

\noindent
Jacob Bernoulli (1654--1705) 

\noindent
Claude Bernard (1813--1878)

\noindent
Felix Bernstein (1878--1956)

\noindent
Sergei Natanovich Bernstein (1880--1968) 
(Сергей Натанович Бернштейн)

\noindent
Joseph Louis Fran\c{c}ois Bertrand (1822--1900)

\noindent
Nic H. Bingham

\noindent
George David Birkhoff (1884--1944)

\noindent
David Blackwell (born 1919)

\noindent
Alain Blum

\noindent
Georg Bohlmann (1869--1928) 

\noindent
Ludwig Eduard Boltzmann (1844--1906)

\noindent
George Boole (1815--1864)

\noindent
\'Emile F\'elix-Edouard-Justin Borel (1871--1956) 

\noindent
Ladislaus von Bortkiewicz (1868--1931)
(Владислав Иосифович Борткевич)

\noindent
Marcel Brissaud

\noindent
Ugo Broggi (1880--1965) 

\noindent
Robert Brown (1773--1858)

\noindent
Bernard Bru (born 1942)

\noindent
Ernst Heinrich Bruns (1848--1919)

\noindent
Stephen George Brush

\noindent
George-Louis Leclerc de Buffon (1707--1788) 

\noindent
Francesco Paolo Cantelli (1875--1966) 

\noindent
Constantin Carath\'eodory (1873--1950) 

\noindent
Rudolf Carnap (1891--1970)

\noindent
Nicolas L\'eonard Sadi Carnot (1796--1832)

\noindent
Guido Castelnuovo (1865--1952) 

\noindent
Carl Wilhelm Ludvig Charlier (1862--1934)

\noindent
Kai Lai Chung (born 1917)

\noindent
\hangindent=\parindent
Aleksandr Aleksandrovich Chuprov (1874--1926)
(Александр Александрович Чупров)

\noindent
Alonzo Church (1903--1995)

\noindent
Cifarelli, Donato Michele 

\noindent
Auguste Comte (1798--1857)

\noindent
Julian Lowell Coolidge (1873--1954)

\noindent
Arthur H. Copeland Sr. (1898--1970) 

\noindent
Antoine-Augustin Cournot (1801--1877) 

\noindent
Jean-Michel Courtault

\noindent
Thomas Merrill Cover (born 1938)

\noindent
Richard T. Cox (1898--1991) 

\noindent
Harald Cram\'er (1893--1985) 

\noindent
Pierre Cr\'epel (born 1947)

\noindent
Emanuel Czuber (1851--1925) 

\noindent
Jean-le-Rond D'Alembert (1717--1783)

\noindent
Percy John Daniell (1889--1946) 

\noindent
Lorraine Daston (born 1951)

\noindent
Bruno {d}e Finetti (1906--1985) 

\noindent
Claude Dellacherie (born 1943)

\noindent
Sergei S. Demidov (born 1942)
(Сергей Демидов)

\noindent
Abraham De Moivre (1667--1754) 

\noindent
Augustus De Morgan (1806--1871)

\noindent
Jean Alexandre Eug\`ene Dieudonn\'e (1906--1992)

\noindent
Wolfgang Doeblin (1915--1940)

\noindent
Joseph L. Doob (1910--2004)

\noindent
Karl D\"orge (1899--1977) 

\noindent
Louis-Gustave Du Pasquier (1876--1957)

\noindent
Peter Larkin Duren (born 1935)

\noindent
Evgeny Borisovich Dynkin (born 1924)
(Евгений Борисович Дынкин)

\noindent
Francis Ysidro Edgeworth (1845--1926)

\noindent
Albert Einstein (1879--1955) 

\noindent
Robert Leslie Ellis (1817--1859) 

\noindent
Agner Krarup Erlang (1878--1929)

\noindent
Georg Faber (1877--1966) 

\noindent
Ruma Falk

\noindent
Gustav Theodor Fechner (1801--1887) 

\noindent
Willy Feller (1906--1970) (William after his immigration to the U.S.)

\noindent
Arne Fisher (1887--1944)

\noindent
Ronald Aylmer Fisher (1890--1962) 

\noindent
Sergei Vasil'evich Fomin (1917--1975)
(Сергей Васильевич Фомин)

\noindent
Robert M. Fortet (1912--1998)

\noindent
Jean Baptiste Joseph Fourier (1768--1830)

\noindent 
Abraham Adolf Fraenkel (1891--1965)

\noindent
R\'en\'e-Maurice Fr\'echet (1878--1973) 

\noindent
John Ernst Freund (born 1921)

\noindent
Hans Freudenthal (1905--1990)

\noindent
Thornton Carl Fry (1892--1991) 

\noindent
Peter G\'acs (born 1947)

\noindent
Joseph Mark Gani (born 1924)

\noindent
R\'en\'e G\^ateaux (1889--1914)

\noindent
Hans Geiger (1882--1945)

\noindent
Valery Ivanovich Glivenko (1897--1940)
(Валерий Иванович Гливенко)

\noindent
\hangindent=\parindent
Boris Vladimirovich Gnedenko (1912--1995)
(Борис Владимирович Гнеденко)

\noindent
\hangindent=\parindent
Vasily Leonidovich Goncharov (1896--1955)
(Василий Леонидович Гончаров)

\noindent
William Sealy Gossett (1876--1937)

\noindent
Jorgen Pederson Gram (1850--1916)

\noindent
Robert Molten Gray (born 1943)

\noindent
Shelby Joel Haberman (born 1947)

\noindent
Ian Hacking (born 1936)

\noindent
Malachi Haim Hacohen

\noindent
Jacques Hadamard (1865-1963) 

\noindent
Maurice Halbwachs (1877--1945)

\noindent
Anders Hald (born 1913) 

\noindent
Paul Richard Halmos (born 1916)

\noindent
Joseph Y. Halpern

\noindent
Godfrey H. Hardy (1877--1947)

\noindent
Felix Hausdorff (1868--1942) 

\noindent
Thomas Hawkins (born 1938) 

\noindent
Georg Helm (1851--1923)

\noindent
Christopher Charles Heyde (born 1939)

\noindent
David Hilbert (1862--1943) 

\noindent
Theophil Henry Hildebrandt (born 1888)

\noindent
Eberhard Hopf (1902--1983)

\noindent
Philip Holgate (1934--1993) 

\noindent
Harold Hotelling (1895--1973)

\noindent
Kiyosi It\^o (born 1915)

\noindent
Harold Jeffreys (1891--1989) 

\noindent
B{\o}rge Jessen (1907--1993)

\noindent
Normal Lloyd Johnson (born 1917)

\noindent
Fran\c{c}ois Jongmans

\noindent
Camille Jordan (1838--1922) 

\noindent
Youri Kabanov
(Юрий Кабанов)

\noindent
Mark Kac (1914--1984)

\noindent
Jean-Pierre Kahane (born 1926) 

\noindent
Immanuel Kant (1724-1804)

\noindent
John Maynard Keynes (1883--1946) 

\noindent
\hangindent=\parindent
Aleksandr Yakovlevich Khinchin (1894--1959)
(Александр Яковлевич Хинчин)

\noindent
Eberhard Knobloch

\noindent
\hangindent=\parindent
Andrei Nikolaevich Kolmogorov (1903--1987)
(Андрей Николаевич Колмогоров)

\noindent
Bernard O. Koopman (1900--1981)

\noindent
Samuel Borisovich Kotz (born 1930)

\noindent
Ulrich Krengel (born 1937)

\noindent
Sylvestre-Fran\c{c}ois Lacroix (1765--1843) 

\noindent
Rudolf Laemmel (1879--1972) 

\noindent
Pierre Simon de Laplace (1749--1827) 

\noindent
Steffen Lauritzen (born 1947)

\noindent
\hangindent=\parindent
Mikhail Alekseevich Lavrent'ev (1900--1980)
(Михаил Алексеевич Лаврентьев)

\noindent
Henri Lebesgue (1875--1941) 

\noindent
\hangindent=\parindent
Mikhail Aleksandrovich Leontovich (1903--1981)
(Михаил Александрович Леонтович)

\noindent
Paul Pierre L\'evy (1886--1971) 

\noindent
Simon Antoine Jean Lhuilier (1750--1840)

\noindent
Ming Li (born 1955)

\noindent
Jean-Baptiste-Joseph Liagre (1815--1891)

\noindent
G. Lindquist

\noindent
Michel Lo\`eve (1907--1979)

\noindent
Bernard Locker (born 1947)

\noindent
Antoni Marjan {\L}omnicki (1881--1941) 

\noindent
Zbigniew {\L}omnicki (born 1904) 

\noindent
Jerzy {\L}o\'s (born 1920)

\noindent
J. Loveland

\noindent
Jan {\L}ukasiewicz (1878--1956)

\noindent
Ernest Filip Oskar Lundberg (1876--1965)

\noindent
Nikolai Nikolaevich Luzin (1883--1950)
(Николай Николаевич Лузин)

\noindent
Leonid Efimovich Maistrov (born 1920)
(Леонид Ефимович Майстров)

\noindent
Hugh MacColl (1837--1909) 

\noindent
\hangindent=\parindent
Andrei Aleksandrovich Markov (1856--1922)
(Андрей Александрович Марков)

\noindent
Thierry Martin (born 1950)

\noindent
Per Martin-L\"of (born 1942) 

\noindent
Pesi R. Masani

\noindent
Laurent Mazliak (born 1968)

\noindent
Paolo Medolaghi (1873--1950)

\noindent
Alexius Meinong (1853--1920)

\noindent
Karl Menger (1902--1985)

\noindent
Martine Mespoulet 

\noindent
Paul-Andr\'e Meyer (1934--2003)

\noindent
Philip Mirowski (born 1951)

\noindent
Edward Charles Molina (born 1877)

\noindent
Jacob Mordukh (born 1895)

\noindent
Ernest Nagel (1901--1985)

\noindent
Jerzy Neyman (1894--1981) 

\noindent
Otton Nikodym (1889--1974) 

\noindent
Albert Novikoff

\noindent
Octav Onicescu (1892--1983)

\noindent
Kheimerool O. Ondar (born 1936)
(Хеймероол Опанович Ондар)

\noindent
Egon Sharpe Pearson (1895--1980)

\noindent
Karl Pearson (1857--1936)

\noindent
Charles Saunders Peirce (1839--1914) 

\noindent
Ivan V. Pesin
Иван Песин

\noindent
B.~V. Pevshin
(Б. В. Певшин)

\noindent
Jean-Paul Pier

\noindent
Jules Henri Poincar\'e (1854--1912) 

\noindent
Sim\'eon-Denis Poisson (1781--1840)

\noindent
Karl R. Popper (1902--1994) 

\noindent
Pierre Prevost (1751--1839)

\noindent
Johann Radon (1887--1956) 

\noindent
Hans Rademacher (1892--1969)

\noindent
Frank Plumpton Ramsey (1903--1930)

\noindent
Eugenio Regazzini (born 1946)

\noindent
Hans Reichenbach (1891--1953) 

\noindent
Alfr\'ed R\'enyi (1921--1970)

\noindent
Hans Richter (born 1912)

\noindent
Henry Lewis Rietz (1875--1943)

\noindent
Leonard Christopher Gordon Rogers 

\noindent
Bertrand Russell(1872--1970)

\noindent
Ernest Rutherford (1871--1937)

\noindent
Stanis{\l}aw Saks (1847--1942)

\noindent
Leonard Jimmie Savage (1917--1971)

\noindent
Ivo Schneider (born 1938)

\noindent
Laurent Schwartz (1915--2002)

\noindent
Irving Ezra Segal (1918--1998)

\noindent
Stanford L. Segal (born 1937)

\noindent
Eugene Seneta (born 1941)

\noindent
Oscar Sheynin (born 1925)\nocite{sheynin:1998a,sheynin:1998b,sheynin:1999a,sheynin:1999b,sheynin:2000}
(Оскар Борисович Шейнин)

\noindent
Albert Nikolaevich Shiryaev (born 1934)
(Альберт Николаевич Ширяев)

\noindent
Wac{\l}aw  Sierpi\'nski (1882--1969) 

\noindent
Evgeny Slutsky (1880--1948)
(Евгений Слуцкий)

\noindent
Aleksei Bronislavovich Sossinsky (born 1937)

\noindent
Charles M. Stein (born 1920)

\noindent
Margaret Stein

\noindent
Hans-Georg Steiner (born 1928)

\noindent
Hugo Dyonizy Steinhaus (1887--1972) 

\noindent
Stephen Mack Stigler (born 1941)

\noindent
Erling Sverdrup (1917--1994)

\noindent
Angus E. Taylor (1911--1999)

\noindent
Thorvald Nicolai Thiele (1838--1910)

\noindent
Erhard Tornier (1894--1982) 

\noindent
Mark R. Tuttle

\noindent
Stanis{\l}aw Ulam (1909--1984)

\noindent
Friedrich Maria Urban (1878--1964)

\noindent
James Victor Uspensky (1883--1947)

\noindent
David Van Dantzig (1900--1959)

\noindent
Edward Burr Van Vleck (1863--1943)

\noindent
John Venn (1834--1923) 

\noindent
Jean-Andr\'e Ville (1910--1988) 

\noindent
Paul Vit\'anyi (born 1944)

\noindent
V. M. Volosov
(В. М. Волосов)

\noindent
Johannes von Kries (1853--1928)

\noindent
Richard Martin Edler von Mises (1883--1953) 

\noindent
John von Neumann (1903--1957)\nocite{vonneumann:1932}

\noindent
Raymond Edward Alan Christopher Paley (1907--1933)

\noindent
Jan von Plato (born 1951) 

\noindent
Marian von Smoluchowski (1872--1917) 

\noindent
Vito Volterra (1860--1940)

\noindent
Alexander Vucinich (1914--2002)

\noindent
Abraham Wald (1902--1950) 

\noindent
Rolin Wavre (1896--1949)

\noindent
Harald Ludvig Westergaard (1853--1936)

\noindent
Peter Whittle (born 1927)

\noindent
William Allen Whitworth (1840--1905)

\noindent
Norbert Wiener (1894--1964) 

\noindent
David Williams (born 1938)

\noindent
Anders Wiman (1865--1959) 

\noindent
William Henry Young (1863--1942)

\noindent
Smilka Zdravkovska

\noindent
Joseph David Zund (born 1939)

\noindent
Antoni Zygmund (1900--1992)


\begin{thebibliography}{389}
\providecommand{\natexlab}[1]{#1}
\providecommand{\url}[1]{\texttt{#1}}
\expandafter\ifx\csname urlstyle\endcsname\relax
  \providecommand{\doi}[1]{doi: #1}\else
  \providecommand{\doi}{doi: \begingroup \urlstyle{rm}\Url}\fi

\bibitem[Airy(1861)]{airy:1861}
George~B. Airy.
\newblock \emph{On the algebraical and numerical theory of errors of
  observations and the combination of observations}.
\newblock Macmillan, London, 1861.
\newblock Second edition 1875, third 1879.

\bibitem[Aleksandrov et~al.(1956)Aleksandrov, Kolmogorov, and
  Lavrent'ev]{aleksandrov/etal:1956}
Aleksandr~D. Aleksandrov, Andrei~N. Kolmogorov, and Mikhail~A. Lavrent'ev,
  editors.
\newblock \emph{Математика, ее содержание, методы
  и значение}.
\newblock Nauka, Moscow, 1956.
\newblock The Russian edition had three volumes. The English translation,
  \emph{Mathematics, Its Content, Methods, and Meaning}, was first published in
  1962 and 1963 in six volumes by the American Mathematical Society,
  Providence, RI, and then republished in 1965 in three volumes by the MIT
  Press, Cambridge, MA. Reprinted by Dover, New York, 1999.

\bibitem[Andersen and Jessen(1948)]{andersen/jessen:1948}
Erik~Sparre Andersen and B{\o}rge Jessen.
\newblock On the introduction of measures in infinite product sets.
\newblock \emph{Det Kongelige Danske Videnskabernes Selskab, Matematisk-Fysiske
  Meddelelser}, XXV\penalty0 (4):\penalty0 1--8, 1948.

\bibitem[Anderson(1935)]{anderson:1935}
Oskar~Nikolaevich Anderson.
\newblock \emph{Einf\"uhrung in die mathematische Statistik}.
\newblock Springer, Vienna, 1935.

\bibitem[Anderson(1949)]{anderson:1949}
Oskar~Nikolaevich Anderson.
\newblock {D}ie {B}egr\"undung des {G}esetzes der grossen {Z}ahlen und die
  {U}mkehrung des {T}heorems von {B}ernoulli.
\newblock \emph{Dialectica}, 3\penalty0 (9/10):\penalty0 65--77, 1949.

\bibitem[Anderson(1954)]{anderson:1954}
Oskar~Nikolaevich Anderson.
\newblock \emph{Probleme der statistischen Methodenlehre in den
  Sozialwissenschaften}.
\newblock Physica, W\"urzburg, 1954.

\bibitem[Anonymous(1954)]{anonymous:1954}
Anonymous.
\newblock Account in {R}ussian of a conference on statistics in {M}oscow in
  1954.
\newblock \emph{Вестник статистики (Bulletin of Statistics)},
  5:\penalty0 39--95, 1954.

\bibitem[Arnol'd(1993)]{arnold:1993}
Vladimir~Igorevich Arnol'd.
\newblock Об {А}.~{Н}. {К}олмогорове ({O}n {A}. {N}.
  {K}olmogorov).
\newblock In  \citet{shiryaev:1993}, pages 144--172.
\newblock Two different English translations have appeared, one on pp.~129--153
  of \citet{zdravkovska/duren:1993}, and one on pp.~89--108 of
  \citet{flower:2000}.

\bibitem[Bachelier(1900)]{bachelier:1900}
Louis Bachelier.
\newblock Th\'eorie de la sp\'eculation.
\newblock \emph{Annales scientifiques de l'\'{E}cole {N}ormale {S}up\'erieure,
  3$^\textit{e}$ s\'erie}, 17:\penalty0 21--86, 1900.
\newblock This was Bachelier's doctoral dissertation. Reprinted in facsimile in
  1995 by \'Editions Jacques Gabay, Paris. An English translation, by A.~James
  Boness, appears on pp.~17--78 of \emph{The Random Character of Stock Market
  Prices}, edited by Paul H. Cootner, MIT Press, Cambridge, MA, 1964.

\bibitem[Bachelier(1910)]{bachelier:1910}
Louis Bachelier.
\newblock Les probabilit\'es \`a plusieurs variables.
\newblock \emph{Annales scientifiques de l'\'{E}cole {N}ormale {S}up\'erieure,
  3$^\textit{e}$ s\'erie}, 27:\penalty0 339--360, 1910.

\bibitem[Bachelier(1912)]{bachelier:1912}
Louis Bachelier.
\newblock \emph{Calcul des probabilit\'es}.
\newblock Gauthier-Villars, Paris, 1912.

\bibitem[Bar-Hillel and Falk(1982)]{barhillel/falk:1982}
Maya Bar-Hillel and Ruma Falk.
\newblock Some teasers concerning conditional probabilities.
\newblock \emph{Cognition}, 11:\penalty0 109--122, 1982.

\bibitem[Barbeau(1993)]{barbeau:1993}
Ed~Barbeau.
\newblock The problem of the car and goats.
\newblock \emph{The College Mathematics Journal}, 24:\penalty0 149--154, 1993.

\bibitem[Barbut et~al.(2004)Barbut, Locker, and
  Mazliak]{barbut/locker/mazliak:2004}
Marc Barbut, Bernard Locker, and Laurent Mazliak.
\newblock \emph{Paul L\'evy--Maurice Fr\'echet, 50 ans de correspondance en 107
  lettres}.
\newblock Hermann, Paris, 2004.

\bibitem[Barone and Novikoff(1978)]{barone/novikoff:1978}
Jack Barone and Albert Novikoff.
\newblock A history of the axiomatic formulation of probability from {B}orel to
  {K}olmogorov: {P}art {I}.
\newblock \emph{Archive for History of Exact Sciences}, 18:\penalty0 123--190,
  1978.

\bibitem[Bartlett(1949)]{bartlett:1949}
Maurice~S. Bartlett.
\newblock Probability and logic, mathematics and science.
\newblock \emph{Dialectica}, 3\penalty0 (9/10):\penalty0 104--113, 1949.

\bibitem[Bayer(1951)]{bayer:1951}
Raymond Bayer, editor.
\newblock \emph{Congr\`es International de Philosophie des Sciences, Paris,
  1949; IV: Calcul des Probabilit\'es}.
\newblock Number 1146 in \textit{Actualit\'es Scientifiques et Industrielles}.
  Hermann, Paris, 1951.

\bibitem[Behnke et~al.(1966)Behnke, Bertram, and
  Sauer]{behnke/bertram/sauer:1966}
Heinrich Behnke, G\"unter Bertram, and Robert Sauer, editors.
\newblock \emph{Grundz\"uge der Mathematik. Band IV. Praktische Methoden und
  Anwendungen der Mathematik (Geometrie und Statistik)}.
\newblock Vanderhoeck \& Ruprecht, G\"ottingen, 1966.

\bibitem[Benzi(1988)]{benzi:1988}
Margherita Benzi.
\newblock Un ``probabilista neoclassico'': {F}rancesco {P}aolo {C}antelli.
\newblock \emph{Historia Mathematica}, 15:\penalty0 53--72, 1988.

\bibitem[Benzi(1995)]{benzi:1995}
Margherita Benzi.
\newblock Dubbiezze e controversie: {I}l dibattito su logica e probabilit\`a in
  {I}talia nei primi anni del {N}ovecento.
\newblock \emph{Historia Mathematica}, 22:\penalty0 43--63, 1995.

\bibitem[Bernoulli(1713)]{bernoulli:1713}
Jacob Bernoulli.
\newblock \emph{Ars Conjectandi}.
\newblock Thurnisius, Basel, 1713.
\newblock This pathbreaking work appeared eight years after Bernoulli's death.
  A facsimile reprinting of the original Latin text is sold by \'Editions
  Jacques Gabay. A German translation appeared in 1899
  (\emph{Wahrscheinlichkeitsrechnung von Jakob Bernoulli. Anmerkungen von
  R.~Haussner}, Ostwald's Klassiker, Nr.~107--108, Engelmann, Leipzig), with a
  second edition (Deutsch, Frankfurt) in 1999. A Russian translation of Part
  IV, which contains Bernoulli's law of large numbers, appeared in 1986:
  Я.~Бернулли, О законе больших чисел, Nauka,
  Moscow. It includes a preface by Kolmogorov, dated October 1985, and
  commentaries by other Russian authors. Bing Sung's translation of Part IV
  into English, dated 1966, remains unpublished but is available in several
  university libraries in the United States. Oscar Sheynin's English
  translation of Part IV, dated 2005, can be downloaded from www.sheynin.de.

\bibitem[Bernstein(1912)]{bernstein:1912}
Felix Bernstein.
\newblock {\"U}ber eine {A}nwendung der {M}engenlehre auf ein aus der {T}heorie
  der s\"akularen {S}t\"orungen herr\"uhrendes {P}roblem.
\newblock \emph{Mathematische Annalen}, 71:\penalty0 417--439, 1912.

\bibitem[Bernstein(1917)]{bernstein:1917}
Sergei~N. Bernstein.
\newblock Опыт аксиоматического обоснования
  теории ве\-ро\-ят\-нос\-тей ({O}n the axiomatic foundation
  of the theory of probability).
\newblock \emph{Со\-об\-ще\-ния Харьковского
  математического общества (Communications of the
  Kharkiv mathematical society)}, 15:\penalty0 209--274, 1917.
\newblock Reprinted on pp.~10--60 of Bernstein's Собрание
  сочинений, Volume IV, Nauka, Moscow, 1964.

\bibitem[Bernstein(1927)]{bernstein:1927}
Sergei~N. Bernstein.
\newblock \emph{Теория вероятностей (Theory of Probability)}.
\newblock Го\-су\-дарст\-вен\-ное Издательство
  (State Publishing House), Moscow and Leningrad, 1927.
\newblock Second edition 1934, fourth 1946.

\bibitem[Bertrand(1889)]{bertrand:1889}
Joseph Bertrand.
\newblock \emph{Calcul des probabilit\'es}.
\newblock Gauthier-Villars, Paris, 1889.
\newblock Some copies of the first edition are dated 1888. Second edition 1907.
  Reprinted by Chelsea, New York, 1972.

\bibitem[Bingham(2000)]{bingham:2000}
Nicholas~H. Bingham.
\newblock Measure into probability: {F}rom {L}ebesgue to {K}olmogorov.
\newblock \emph{Biometrika}, 87:\penalty0 145--156, 2000.

\bibitem[Blackwell(1956)]{blackwell:1956}
David Blackwell.
\newblock On a class of probability spaces.
\newblock In Jerzy Neyman, editor, \emph{Proceedings of the Third Berkeley
  Symposium on Mathematical Statistics and Probability}, volume~2, pages 1--6.
  University of California Press, Berkeley and Los Angeles, 1956.

\bibitem[Blum and Mespoulet(2003)]{blum/mespoulet:2003}
Alain Blum and Martine Mespoulet.
\newblock \emph{L'Anarchie bureaucratique. Statistique et pouvoir sous
  Staline}.
\newblock La D\'ecouverte, Paris, 2003.

\bibitem[Bohlmann(1901)]{bohlmann:1901}
Georg Bohlmann.
\newblock Lebensversicherungs-{M}athematik.
\newblock In \emph{Encyklop\"adie der mathematischen Wissenschaften, Bd. I,
  Teil 2}, pages 852--917. Teubner, Leipzig, 1901.

\bibitem[Boole(1854)]{boole:1854}
George Boole.
\newblock \emph{An investigation of the laws of thought, on which are founded
  the mathematical theories of logic and probabilities}.
\newblock Macmillan, London, 1854.
\newblock Reprinted by Dover, New York, 1958.

\bibitem[Borel(1895)]{borel:1895}
\'Emile Borel.
\newblock Sur quelques points de la th\'eorie des fonctions.
\newblock \emph{Annales scientifiques de l'\'{E}cole {N}ormale {S}up\'erieure,
  3$^\textit{e}$ s\'erie}, 12:\penalty0 9--55, 1895.

\bibitem[Borel(1897)]{borel:1897}
\'Emile Borel.
\newblock Sur les s\'eries de {T}aylor.
\newblock \emph{Acta Mathematica}, 20:\penalty0 243--247, 1897.
\newblock Reprinted in \citet{borel:1972}, Volume 2, pp.~661--665.

\bibitem[Borel(1898)]{borel:1898}
\'Emile Borel.
\newblock \emph{Le\c{c}ons sur la th\'eorie des fonctions}.
\newblock Gauthier-Villars, Paris, 1898.

\bibitem[Borel(1905)]{borel:1905}
\'Emile Borel.
\newblock Remarques sur certaines questions de probabilit\'e.
\newblock \emph{Bulletin de la Soci\'et\'e math\'ematique de France},
  33:\penalty0 123--128, 1905.
\newblock Reprinted in \citet{borel:1972}, Volume 2, pp.~985--990.

\bibitem[Borel(1906)]{borel:1906}
\'Emile Borel.
\newblock La valeur pratique du calcul des probabilit\'es.
\newblock \emph{Revue du mois}, 1:\penalty0 424--437, 1906.
\newblock Reprinted in \citet{borel:1972}, Volume 2, pp.~991--1004.

\bibitem[Borel(1909a)]{borel:1909a}
\'Emile Borel.
\newblock Les probabilit\'es d\'enombrables et leurs applications
  arithm\'etiques.
\newblock \emph{Rendiconti del Circolo Matematico di Palermo}, 27:\penalty0
  247--270, 1909a.
\newblock Reprinted in \citet{borel:1972}, Volume 2, pp.~1055--1079.

\bibitem[Borel(1909b)]{borel:1909b}
\'Emile Borel.
\newblock \emph{{\'E}l\'ements de la th\'eorie des probabilit\'es}.
\newblock Gauthier-Villars, Paris, 1909b.
\newblock Third edition 1924. The 1950 edition was translated into English by
  John E. Freund and published as \emph{Elements of the Theory of Probability}
  by Prentice-Hall in 1965.

\bibitem[Borel(1912)]{borel:1912}
\'Emile Borel.
\newblock \emph{Notice sur les travaux scientifiques}.
\newblock Gauthier-Villars, Paris, 1912.
\newblock Prepared by Borel to support his candidacy to the Acad\'emie des
  Sciences. Reprinted in \citet{borel:1972}, Volume 1, pp.~119--190.

\bibitem[Borel(1914)]{borel:1914}
\'Emile Borel.
\newblock \emph{Le Hasard}.
\newblock Alcan, Paris, 1914.
\newblock The first and second editions both appeared in 1914, with later
  editions in 1920, 1928, 1932, 1938, and 1948.

\bibitem[Borel(1930)]{borel:1930}
\'Emile Borel.
\newblock Sur les probabilit\'es universellement n\'egligeables.
\newblock \emph{Comptes rendus hebdomadaires des s\'eances de l'Acad\'emie des
  Sciences}, 190:\penalty0 537--540, 1930.
\newblock Reprinted as Note~IV of~\citet{borel:1939}.

\bibitem[Borel(1939)]{borel:1939}
\'Emile Borel.
\newblock \emph{Valeur pratique et philosophie des probabilit\'es}.
\newblock Gauthier-Villars, Paris, 1939.
\newblock Reprinted in 1991 by \'Editions Jacques Gabay.

\bibitem[Borel(1941)]{borel:1941}
\'Emile Borel.
\newblock \emph{Le jeu, la chance et les th\'eories scientifiques modernes}.
\newblock Gallimard, Paris, 1941.

\bibitem[Borel(1943)]{borel:1943}
\'Emile Borel.
\newblock \emph{Les probabilit\'es et la vie}.
\newblock Presses Universitaires de France, Paris, 1943.
\newblock Second edition 1946, third 1950, fourth 1958, sixth 1967. The fourth
  edition was translated into English by Maurice Baudin and published as
  \emph{Probabilities and Life} by Dover, New York, in 1962.

\bibitem[Borel(1949)]{borel:1949}
\'Emile Borel.
\newblock Probabilit\'e et certitude.
\newblock \emph{Dialectica}, 3\penalty0 (9/10):\penalty0 24--27, 1949.

\bibitem[Borel(1950)]{borel:1950}
\'Emile Borel.
\newblock \emph{Probabilit\'e et certitude}.
\newblock Presses Universitaires de France, Paris, 1950.
\newblock An English translation, \emph{Probability and Certainty}, was
  published in 1963 by Walker, New York.

\bibitem[Borel(1972)]{borel:1972}
\'Emile Borel.
\newblock \emph{{\OE}uvres de \'Emile Borel}.
\newblock Centre National de la Recherche Scientifique, Paris, 1972.
\newblock Four volumes.

\bibitem[Bourbaki~(pseudonym)(1994)]{bourbaki:1994}
Nicolas Bourbaki~(pseudonym).
\newblock \emph{Elements of the History of Mathematics}.
\newblock Springer, Berlin, 1994.
\newblock Translated from the 1984 French edition by John Meldrum.

\bibitem[Brissaud(2002)]{brissaud:2002}
Marcel Brissaud, editor.
\newblock \emph{Ecrits sur les processus al\'eatoires: M\'elanges en hommage
  \`a Robert Fortet}.
\newblock Lavoisier, Paris, 2002.

\bibitem[Broggi(1907)]{broggi:1907}
Ugo Broggi.
\newblock \emph{Die {A}xiome der {W}ahrscheinlichkeitsrechnung}.
\newblock PhD thesis, Universit\"at G\"ottingen, 1907.
\newblock Excerpts reprinted on pp.~359--366 of \citet{schneider:1988}.

\bibitem[Bru(1999)]{bru:1999}
Bernard Bru.
\newblock Borel, {L}\'evy, {N}eyman, {P}earson et les autres.
\newblock \emph{MATAPLI}, 60:\penalty0 51--60, octobre 1999.

\bibitem[Bru(2001)]{bru:2001}
Bernard Bru.
\newblock {\'E}mile {B}orel.
\newblock In  \citet{heyde/seneta:2001}, pages 287--291.

\bibitem[Bru(2002)]{bru:2002}
Bernard Bru.
\newblock Pr\'esentation de l'{\oe}uvre de {R}obert {F}ortet.
\newblock In  \citet{brissaud:2002}, pages 19--48.

\bibitem[Bru(2003a)]{bru:2003a}
Bernard Bru.
\newblock Souvenirs de {B}ologne.
\newblock \emph{Journal de la Soci\'et\'e Fran\c{c}aise de Statistique},
  144\penalty0 (1--2):\penalty0 134--226, 2003a.
\newblock Special volume on history.

\bibitem[Bru(2003b)]{bru:2003b}
Bernard Bru.
\newblock Personal communication, 2003b.

\bibitem[Bru and Jongmans(2001)]{bru/jongmans:2001}
Bernard Bru and Fran\c{c}ois Jongmans.
\newblock Joseph {B}ertrand.
\newblock In  \citet{heyde/seneta:2001}, pages 185--189.

\bibitem[Bruns(1906)]{bruns:1906}
Heinrich Bruns.
\newblock \emph{Wahrscheinlichkeitsrechnung und Kollektivmasslehre}.
\newblock Teubner, Leipzig and Berlin, 1906.
\newblock Free on-line at http://historical.library.cornell.edu.

\bibitem[Brush(1968)]{brush:1968}
Stephen~G. Brush.
\newblock A history of random sequences. {I}: Brownian movement from {B}rown to
  {P}errin.
\newblock \emph{Archive for History of Exact Sciences}, 5:\penalty0 1--36,
  1968.

\bibitem[Buffon(1777)]{buffon:1777}
George-Louis Buffon.
\newblock Essai d'arithm\'etique morale.
\newblock In \emph{Suppl\'ement \`a l'Histoire naturelle}, volume~4, pages
  46--148. Imprimerie Royale, Paris, 1777.

\bibitem[Cantelli(1905)]{cantelli:1905}
Francesco~Paolo Cantelli.
\newblock Sui fondamenti del calcolo delle probabilit\`a.
\newblock \emph{Il Pitagora. Giornale di matematica per gli alunni delle scuole
  secondarie}, 12:\penalty0 21--25, 1905.
\newblock In issue 1--2, for October and November 1905.

\bibitem[Cantelli(1905--1906)]{cantelli:1905/1906}
Francesco~Paolo Cantelli.
\newblock Calcolo delle probabilit\`a.
\newblock \emph{Il Pitagora. Giornale di matematica per gli alunni delle scuole
  secondarie}, 12:\penalty0 33--39 and 68--74, 1905--1906.
\newblock In issue 3--4, for December 1905 and Janaury 1906, continued in issue
  5--6--7, for February, March, and April 1906.

\bibitem[Cantelli(1916a)]{cantelli:1916a}
Francesco~Paolo Cantelli.
\newblock La tendenza ad un limite nel senso del calcolo delle probabilit\`a.
\newblock \emph{Rendiconti del Circolo Matematico di Palermo}, 41:\penalty0
  191--201, 1916a.
\newblock Reprinted in \citet{cantelli:1958}, pp.~175--188.

\bibitem[Cantelli(1916b)]{cantelli:1916b}
Francesco~Paolo Cantelli.
\newblock Sulla legge dei grandi numeri.
\newblock \emph{Memorie, Accademia dei Lincei}, V, 11:\penalty0 329--349,
  1916b.
\newblock Reprinted in \citet{cantelli:1958}, pp.~189--213.

\bibitem[Cantelli(1917)]{cantelli:1917}
Francesco~Paolo Cantelli.
\newblock Sulla probabilit\`a come limite della frequenza.
\newblock \emph{Atti Reale Accademia Nazionale dei Lincei. Rendiconti},
  26:\penalty0 39--45, 1917.
\newblock Reprinted in \citet{cantelli:1958}, pp.~214--221.

\bibitem[Cantelli(1932)]{cantelli:1932}
Francesco~Paolo Cantelli.
\newblock Una teoria astratta del calcolo delle probabilit\`a.
\newblock \emph{Giornale dell'Istituto Italiano degli Attuari}, 8:\penalty0
  257--265, 1932.
\newblock Reprinted in \citet{cantelli:1958}, pp.~289--297.

\bibitem[Cantelli(1935)]{cantelli:1935}
Francesco~Paolo Cantelli.
\newblock Consid\'erations sur la convergence dans le calcul des
  probabilit\'es.
\newblock \emph{Annales de l'Institut Henri Poincar\'e}, 5:\penalty0 3--50,
  1935.
\newblock Reprinted in \citet{cantelli:1958}, pp.~322--372.

\bibitem[Cantelli(1958)]{cantelli:1958}
Francesco~Paolo Cantelli.
\newblock \emph{Alcune Memorie Matematiche}.
\newblock Giuffr\`e, Milan, 1958.

\bibitem[Carath\'eodory(1914)]{caratheodory:1914}
Constantin Carath\'eodory.
\newblock {\"U}ber das lineare {M}ass von {P}unktmengen---eine
  {V}erallgemeinerung des {L}\"angenbegriffs.
\newblock \emph{Nachrichten der Akademie der Wissenschaften zu G\"ottingen. II.
  Mathematisch-Physikalische Klasse}, 4:\penalty0 404--426, 1914.

\bibitem[Carath\'eodory(1918)]{caratheodory:1918}
Constantin Carath\'eodory.
\newblock \emph{Vorlesungen \"uber reelle {F}unktionen}.
\newblock Teubner, Leipzig and Berlin, 1918.
\newblock Second edition 1927.

\bibitem[Castelnuovo(1919)]{castelnuovo:1919}
Guido Castelnuovo.
\newblock \emph{Calcolo delle probabilit\'a}.
\newblock Albrighi e Segati, Milan, Rome, and Naples, 1919.
\newblock Second edition in two volumes, 1926 and 1928. Third edition 1948.

\bibitem[Chuprov(1910)]{chuprov:1910}
Aleksandr~Aleksandrovich Chuprov.
\newblock \emph{Очерки по теории статистики (Essays on
  the theory of statistics)}.
\newblock Sabashnikov, Saint Petersburg, second edition, 1910.
\newblock The first edition appeared in 1909. The second edition was reprinted
  by the State Publishing House, Moscow, in 1959.

\bibitem[Chuprov(1922)]{chuprov:1922}
Aleksandr~Aleksandrovich Chuprov.
\newblock Das {G}esetz der gro{\ss}en {Z}ahlen und der
  stochastisch-statistische {S}tandpunkt in der modernen {W}issenschaft.
\newblock \emph{Nordisk statistik tidskrift}, 1\penalty0 (1):\penalty0 39--67,
  1922.

\bibitem[Church(1940)]{church:1940}
Alonzo Church.
\newblock On the concept of a random sequence.
\newblock \emph{Bulletin of the American Mathematical Society}, 46:\penalty0
  130--135, 1940.

\bibitem[Cifarelli and Regazzini(1996)]{cifarelli/regazzini:1996}
Donato~Michele Cifarelli and Eugenio Regazzini.
\newblock De {F}inetti's contribution to probability and statistics.
\newblock \emph{Statistical Science}, 11:\penalty0 253--282, 1996.

\bibitem[Coolidge(1925)]{coolidge:1925}
Julian~Lowell Coolidge.
\newblock \emph{An Introduction to Mathematical Probability}.
\newblock Oxford University Press, London, 1925.

\bibitem[Copeland~Sr.(1928)]{copeland:1928}
Arthur~H. Copeland~Sr.
\newblock Admissible numbers in the theory of probability.
\newblock \emph{American Journal of Mathematics}, 50:\penalty0 535--552, 1928.

\bibitem[Copeland~Sr.(1932)]{copeland:1932}
Arthur~H. Copeland~Sr.
\newblock The theory of probability from the point of view of admissible
  numbers.
\newblock \emph{Annals of Mathematical Statistics}, 3:\penalty0 143--156, 1932.

\bibitem[Cournot(1843)]{cournot:1843}
Antoine-Augustin Cournot.
\newblock \emph{Exposition de la th\'eorie des chances et des probabilit\'es}.
\newblock Hachette, Paris, 1843.
\newblock Reprinted in 1984 as Volume~I (B.~Bru, editor) of
  \citet{cournot:1973}.

\bibitem[Cournot(1973--1984)]{cournot:1973}
Antoine-Augustin Cournot.
\newblock \emph{{\OE}uvres compl\`etes}.
\newblock Vrin, Paris, 1973--1984.
\newblock 10 volumes, with an eleventh to appear.

\bibitem[Coutault and Kabanov(2002)]{courtault/kabanov:2002}
Jean-Michel Coutault and Youri Kabanov, editors.
\newblock \emph{Louis Bachelier: Aux origines de la finance math\'ematique}.
\newblock Presses Universitaires Franc-Comtoises, distributed by C\emph{i}D,
  Paris, 2002.

\bibitem[Cover et~al.(1989)Cover, G\'acs, and Gray]{cover/etal:1989}
Thomas~M. Cover, Peter G\'acs, and Robert~M. Gray.
\newblock Kolmogorov's contributions to information theory and algorithmic
  complexity.
\newblock \emph{Annals of Probability}, 17:\penalty0 840--865, 1989.

\bibitem[Cox(1946)]{cox:1946}
Richard~T. Cox.
\newblock Probability, frequency, and reasonable expectation.
\newblock \emph{American Journal of Physics}, 14:\penalty0 1--13, 1946.

\bibitem[Cram\'er(1937)]{cramer:1937}
Harald Cram\'er.
\newblock \emph{Random Variables and Probability Distributions}.
\newblock Cambridge University Press, Cambridge, 1937.

\bibitem[Cram\'er(1946)]{cramer:1946}
Harald Cram\'er.
\newblock \emph{Mathematical Methods in Statistics}.
\newblock Princeton University Press, Princeton, NJ, 1946.

\bibitem[Cram\'er(1976)]{cramer:1976}
Harald Cram\'er.
\newblock Half a century with probability theory: {S}ome personal
  recollections.
\newblock \emph{Annals of Probability}, 4:\penalty0 509--546, 1976.

\bibitem[Cr\'epel(1984)]{crepel:1984}
Pierre Cr\'epel.
\newblock Quelques mat\'eriaux pour l'histoire de la th\'eorie des martingales
  (1920--1940).
\newblock Technical report, Seminaires de Probabilit\'es, Universit\'e de
  Rennes, 1984.

\bibitem[Czuber(1903)]{czuber:1903}
Emanuel Czuber.
\newblock \emph{Wahrscheinlichkeitsrechnung und ihre Anwendung auf
  Fehlerausgleichung, Statistik und Lebensversicherung}.
\newblock Teubner, Leipzig, 1903.
\newblock Second edition 1910, third 1914.

\bibitem[d'Alembert(1761)]{dalembert:1761}
Jean d'Alembert.
\newblock R\'eflexions sur le calcul des probabilit\'es.
\newblock In \emph{Opuscule math\'ematiques}, volume~2, pages 1--25. 1761.

\bibitem[d'Alembert(1767)]{dalembert:1767}
Jean d'Alembert.
\newblock Doutes et questions sur le calcul des probabilit\'es.
\newblock In \emph{M\'elanges de litt\'erature, d'histoire, et de philosophie},
  volume~5, pages 275--304. 1767.

\bibitem[Daniell(1918)]{daniell:1918}
Percy~John Daniell.
\newblock A general form of integral.
\newblock \emph{Annals of Mathematics}, 19:\penalty0 279--294, June 1918.

\bibitem[Daniell(1919a)]{daniell:1919a}
Percy~John Daniell.
\newblock Integrals in an infinite number of dimensions.
\newblock \emph{Annals of Mathematics}, 20:\penalty0 281--288, July 1919a.
\newblock July.

\bibitem[Daniell(1919b)]{daniell:1919b}
Percy~John Daniell.
\newblock Functions of limited variation in an infinite number of dimensions.
\newblock \emph{Annals of Mathematics}, 21:\penalty0 30--38, September 1919b.
\newblock September.

\bibitem[Daniell(1920)]{daniell:1920}
Percy~John Daniell.
\newblock Further properties of the general integral.
\newblock \emph{Annals of Mathematics}, 21:\penalty0 203--220, March 1920.

\bibitem[Daniell(1921)]{daniell:1921}
Percy~John Daniell.
\newblock Integral products and probability.
\newblock \emph{American Journal of Mathematics}, 43:\penalty0 143--162, July
  1921.

\bibitem[Daston(1979)]{daston:1979}
Lorraine Daston.
\newblock D'{A}lembert's critique of probability theory.
\newblock \emph{Historia Mathematica}, 6:\penalty0 259--279, 1979.

\bibitem[Daston(1994)]{daston:1994}
Lorraine Daston.
\newblock How probabilities came to be objective and subjective.
\newblock \emph{Historia Mathematica}, 21:\penalty0 330--344, 1994.

\bibitem[Dawid(2004)]{Dawid:2004}
A.~P. Dawid.
\newblock Probability, causality and the empirical world: {A} {B}ayes--de
  {F}inetti--{P}opper--{B}orel synthesis.
\newblock \emph{Statistical Science}, 19:\penalty0 44--57, 2004.

\bibitem[{d}e Finetti(1930)]{definetti:1930}
Bruno {d}e Finetti.
\newblock A proposito dell'estensione del teorema delle probabilit\`a totali
  alle classi numerabili.
\newblock \emph{Rendiconti del Reale Istituto Lombardo di Scienze e Lettere},
  63:\penalty0 901--905, 1063--1069, 1930.

\bibitem[{d}e Finetti(1939)]{definetti:1939}
Bruno {d}e Finetti.
\newblock \emph{Compte rendu critique du colloque de Gen\`eve sur la th\'eorie
  des probabilit\'es}.
\newblock Number 766 in \textit{Actualit\'es Scientifiques et Industrielles}.
  Hermann, Paris, 1939.
\newblock This is the eighth fascicle of \citet{wavre:1938}.

\bibitem[{d}e Finetti(1951)]{definetti:1951}
Bruno {d}e Finetti.
\newblock Recent suggestions for the reconciliation of theories of probability.
\newblock In Jerzy Neyman, editor, \emph{Proceedings of the Second Berkeley
  Symposium on Mathematical Statistics and Probability}, pages 217--225.
  University of California Press, Berkeley and Los Angeles, 1951.

\bibitem[{d}e Finetti(1955)]{definetti:1955}
Bruno {d}e Finetti.
\newblock {N}otes de {M}. {B}. de {F}inetti sur le ``{R}apport g\'en\'eral''.
\newblock In \emph{Les math\'ematiques et le concret} \citet{frechet:1955},
  pages 232--241.

\bibitem[{d}e Finetti(1970)]{definetti:1970}
Bruno {d}e Finetti.
\newblock \emph{Teoria Delle Probabilit\`a}.
\newblock Einaudi, Turin, 1970.
\newblock An English translation, by Antonio Machi and Adrian Smith, was
  published as \emph{Theory of Probability} by Wiley (London) in two volumes in
  1974 and 1975.

\bibitem[De~Moivre(1718)]{demoivre:1718}
Abraham De~Moivre.
\newblock \emph{The Doctrine of Chances: or, A Method of Calculating the
  Probabilities of Events in Play}.
\newblock Pearson, London, 1718.
\newblock Second edition 1738, third 1756.

\bibitem[De~Morgan(1838)]{demorgan:1838}
Augustus De~Morgan.
\newblock \emph{An Essay on Probabilities, and on their application to Life
  Contingencies and Insurance Offices}.
\newblock Longman, Orme, Brown, Green \& Longmans, London, 1838.
\newblock Reprinted by Arne Press, New York, 1981.

\bibitem[De~Morgan(1847)]{demorgan:1847}
Augustus De~Morgan.
\newblock \emph{Formal Logic: or, The Calculus of Inference, Necessary and
  Probable}.
\newblock Taylor and Walton, London, 1847.

\bibitem[Dellacherie and Meyer(1975)]{dellacherie/meyer:1975}
Claude Dellacherie and Paul-Andr\'e Meyer.
\newblock \emph{Probabilit\'es et Potentiel}.
\newblock Hermann, Paris, 1975.

\bibitem[Demidov and Pevshin(1999)]{demidov/pevshin:1999}
S.~S. Demidov and B.~V. Pevshin, editors.
\newblock \emph{Дело академика Николая
  Николаевича Лузина (The Affair of Academician Nikolai
  Nikolaevich Luzin)}.
\newblock РХГИ (Russian Christian Humanitarian Institute), Saint
  Petersburg, 1999.

\bibitem[Dieudonn\'e(1948)]{dieudonne:1948}
Jean Dieudonn\'e.
\newblock Sur le th\'eor\`eme de {L}ebesgue-{N}ikodym. {III}.
\newblock \emph{Annales de l'Universit\'e de Grenoble}, 23:\penalty0 25--53,
  1948.

\bibitem[Doetsch(1933)]{doetsch:1935}
Gustav Doetsch.
\newblock Review of {K}olmogorov (1933).
\newblock \emph{Jahresbericht der Deutschen Mathematiker-Vereinigung},
  45:\penalty0 153, 1933.

\bibitem[Doob(1934a)]{doob:1934a}
Joseph~L. Doob.
\newblock Probability and statistics.
\newblock \emph{Transactions of the American Mathematical Society},
  36:\penalty0 759--775, 1934a.

\bibitem[Doob(1934b)]{doob:1934b}
Joseph~L. Doob.
\newblock Stochastic processes and statistics.
\newblock \emph{Proceedings of the National Academy of Sciences of the United
  States}, 20:\penalty0 376--379, 1934b.

\bibitem[Doob(1937)]{doob:1937}
Joseph~L. Doob.
\newblock Stochastic processes depending on a continuous parameter.
\newblock \emph{Transactions of the American Mathematical Society},
  42:\penalty0 107--140, 1937.

\bibitem[Doob(1938)]{doob:1938}
Joseph~L. Doob.
\newblock Stochastic processes with an integral-valued parameter.
\newblock \emph{Transactions of the American Mathematical Society},
  44:\penalty0 87--150, 1938.

\bibitem[Doob(1940a)]{doob:1940a}
Joseph~L. Doob.
\newblock Regularity properties of certain families of chance variables.
\newblock \emph{Transactions of the American Mathematical Society},
  47:\penalty0 455--486, 1940a.

\bibitem[Doob(1940b)]{doob:1940b}
Joseph~L. Doob.
\newblock The law of large numbers for continuous stochastic processes.
\newblock \emph{Duke Mathematical Journal}, 6:\penalty0 290--306, 1940b.

\bibitem[Doob(1941)]{doob:1941}
Joseph~L. Doob.
\newblock Probability as measure.
\newblock \emph{Annals of Mathematical Statistics}, 12:\penalty0 206--214,
  1941.
\newblock This article originated as a paper for a meeting of the Institute of
  Mathematical Statistics in Hanover, New Hampshire, in September 1940. It was
  published together with an article by von Mises and comments by Doob and von
  Mises on each other's articles.

\bibitem[Doob(1942)]{doob:1942}
Joseph~L. Doob.
\newblock Topics in the theory of {M}arkoff chains.
\newblock \emph{Transactions of the American Mathematical Society},
  52:\penalty0 37--64, 1942.

\bibitem[Doob(1945)]{doob:1945}
Joseph~L. Doob.
\newblock {M}arkoff chains---denumerable case.
\newblock \emph{Transactions of the American Mathematical Society},
  58:\penalty0 455--473, 1945.

\bibitem[Doob(1947)]{doob:1947}
Joseph~L. Doob.
\newblock Probability in function space.
\newblock \emph{Bulletin of the American Mathematical Society}, 53:\penalty0
  15--30, 1947.

\bibitem[Doob(1948)]{doob:1948}
Joseph~L. Doob.
\newblock Asymptotic properties of {M}arkoff transition probabilities.
\newblock \emph{Transactions of the American Mathematical Society},
  63:\penalty0 393--421, 1948.

\bibitem[Doob(1953)]{doob:1953}
Joseph~L. Doob.
\newblock \emph{Stochastic Processes}.
\newblock Wiley, New York, 1953.

\bibitem[Doob(1966)]{doob:1966}
Joseph~L. Doob.
\newblock Wiener's work in probability theory.
\newblock \emph{Bulletin of the American Mathematical Society}, 72\penalty0 (1,
  Part II):\penalty0 69--72, 1966.
\newblock This is a special obituary issue for Wiener, and its pages are
  numbered separately from the rest of Volume 72.

\bibitem[Doob(1972)]{doob:1972}
Joseph~L. Doob.
\newblock William {F}eller and twentieth century probability.
\newblock In Lucien~M. Le~Cam, Jerzy Neyman, and Elizabeth~L. Scott, editors,
  \emph{Proceedings of the Sixth Berkeley Symposium on Mathematical Statistics
  and Probability}, pages xv--xx. University of California Press, Berkeley and
  Los Angeles, 1972.

\bibitem[Doob(1989)]{doob:1989}
Joseph~L. Doob.
\newblock Kolmogorov's early work on convergence theory and foundations.
\newblock \emph{Annals of Probability}, 17:\penalty0 815--821, 1989.

\bibitem[Doob(1994)]{doob:1994}
Joseph~L. Doob.
\newblock The development of rigor in mathematical probability, 1900--1950.
\newblock In  \citet{pier:1994}, pages 157--170.
\newblock Reprinted in \emph{American Mathematical Monthly} 103(7):586--595,
  1996.

\bibitem[D\"orge(1930)]{dorge:1930}
Karl D\"orge.
\newblock Zu der von {R}. von {M}ises gegebenen {B}egr\"undung der
  {W}ahrscheinlichkeitsrechnung.
\newblock \emph{Mathematische Zeitschrift}, 32:\penalty0 232--258, 1930.

\bibitem[D\"orge(1933)]{dorge:1933}
Karl D\"orge.
\newblock Review of {K}olmogorov (1933).
\newblock \emph{Jahrbuch \"uber die Fortschritte der Mathematik}, 59:\penalty0
  1152, 1933.

\bibitem[Du~Pasquier(1926)]{dupasquier:1926}
Louis-Gustave Du~Pasquier.
\newblock \emph{Le calcul des probabilit\'es, son \'evolution math\'ematique et
  philosophique}.
\newblock Hermann, Paris, 1926.

\bibitem[Dynkin(1989)]{dynkin:1989}
Evgeny~B. Dynkin.
\newblock Kolmogorov and the theory of {M}arkov processes.
\newblock \emph{Annals of Probability}, 17:\penalty0 822--832, 1989.

\bibitem[Edgeworth(1887)]{edgeworth:1887}
Francis~Y. Edgeworth.
\newblock \emph{Metretike, or the Method of Measuring Probabilities and
  Utility}.
\newblock Temple, London, 1887.

\bibitem[Edgeworth(1996)]{edgeworth:1996}
Francis~Y. Edgeworth.
\newblock \emph{Writings in Probability, Statistics, and Economics}.
\newblock Edward Elgar, Cheltenham, United Kingdom, 1996.
\newblock Three volumes. Edited by Charles R. McCann, Jr.

\bibitem[Einstein(1905)]{einstein:1905}
Albert Einstein.
\newblock \"{U}ber die von der molekularkinetischen {T}heorie der {W}\"arme
  geforderte {B}ewegung von in ruhenden {F}l\"ussigkeiten suspendierten
  {T}eilchen.
\newblock \emph{Annalen der Physik}, 17:\penalty0 549--560, 1905.
\newblock English translation in \citet{einstein:1956}.

\bibitem[Einstein(1906)]{einstein:1906}
Albert Einstein.
\newblock {Z}ur {T}heorie der {B}rownschen {B}ewegung.
\newblock \emph{Annalen der Physik}, 19:\penalty0 371--381, 1906.
\newblock English translation in \citet{einstein:1956}.

\bibitem[Einstein(1956)]{einstein:1956}
Albert Einstein.
\newblock \emph{Investigations on the Theory of the Brownian Movement}.
\newblock Dover, New York, 1956.
\newblock First published by Dutton, New York, in 1926, this is a translation,
  by A.~D.~Cowper, of \emph{Untersuchungen \"uber die Theorie der Brownschen
  Bewegung}. It includes most of Einstein's papers on Brownian motion.

\bibitem[Ellis(1849)]{ellis:1849}
Robert~Leslie Ellis.
\newblock On the foundations of the theory of probabilities.
\newblock \emph{Transactions of the Cambridge Philosophical Society},
  8\penalty0 (1):\penalty0 1--6, 1849.
\newblock The paper was read on February 14, 1842. Part 1 of Volume 8 was
  published in 1843 or 1844, but Volume 8 was not completed until 1849.

\bibitem[Faber(1910)]{faber:1910}
Georg Faber.
\newblock {\"U}ber stetige {F}unktionen.
\newblock \emph{Mathematische Annalen}, 69:\penalty0 372--443, 1910.

\bibitem[Fechner(1897)]{fechner:1897}
Gustav~Theodor Fechner.
\newblock \emph{Kollektivmasslehre}.
\newblock Engelmann, Leipzig, 1897.
\newblock Edited by G.~F.~Lipps.

\bibitem[Feller(1934)]{feller:1934}
William Feller.
\newblock Review of {K}olmogorov (1933)\nocite{kolmogorov:1933}.
\newblock \emph{Zentralblatt f\"ur Mathematik und ihre Grenzegebiete},
  7:\penalty0 216, 1934.

\bibitem[Feller(1938)]{feller:1938}
William Feller.
\newblock Sur les axiomatiques du calcul des probabilit\'es et leurs relations
  avec les exp\'eriences.
\newblock In  \citet{wavre:1938}, pages 7--21 of the second fascicle, number
  735 \emph{Les fondements du calcul des probabilit\'es}.
\newblock This celebrated colloquium, chaired by Maurice Fr\'echet, was held in
  October 1937 at the University of Geneva. Participants included Cram\'er,
  D{\oe}blin, Feller, de Finetti, Heisenberg, Hopf, L\'evy, Neyman, P\`olya,
  Steinhaus, and Wald, and communications were received from Bernstein,
  Cantelli, Glivenko, Jordan, Kolmogorov, von Mises, and Slutsky. The
  proceedings were published by Hermann in eight fascicles in their series
  \textit{Actualit\'es Scientifiques et Industrielles}. The first seven
  fascicles appeared in 1938 as numbers 734 through 740; the eighth, de
  Finetti's summary of the colloquium, appeared in 1939 as number 766. See de
  Finetti (1939)\nocite{definetti:1939}.

\bibitem[Fisher(1915)]{fisher:1915}
Arne Fisher.
\newblock \emph{The Mathematical Theory of Probabilities and Its Application to
  Frequency Curves and Statistical Methods}.
\newblock Macmillan, New York, 1915.
\newblock Second edition 1922.

\bibitem[Flower(2000)]{flower:2000}
David Flower, editor.
\newblock \emph{Kolmogorov in Perspective}.
\newblock American Mathematical Society and London Mathematical Society,
  Providence, RI, and London, 2000.
\newblock Volume 20 of the History of Mathematics Series. Flower is listed as
  the chair of the editorial board.

\bibitem[Fortet(1948)]{fortet:1948}
Robert~M. Fortet.
\newblock Opinions modernes sur les fondements du calcul de probabilit\'es.
\newblock In  \citet{lionnais:1948}, pages 207--215.
\newblock This volume was reprinted by Blanchard in 1962 and by Hermann in
  1998. An English translation, \emph{Great Currents of Mathematical Thought},
  was published by Dover, New York, in 1971.

\bibitem[Fortet(1950)]{fortet:1950}
Robert~M. Fortet.
\newblock \emph{Calcul des probabilit\'es}.
\newblock Centre National de la Recherche Scientifique, Paris, 1950.

\bibitem[Fortet(1951)]{fortet:1951}
Robert~M. Fortet.
\newblock Faut-il \'elargir les axiomes du calcul des probabilit\'es?
\newblock In  \citet{bayer:1951}, pages 35--47.

\bibitem[Fraenkel(1967)]{fraenkel:1967}
Abraham~A. Fraenkel.
\newblock \emph{Lebenskreise}.
\newblock Deutsche Verlag-Anstalt, Stuttgart, 1967.

\bibitem[Fr\'echet(1915a)]{frechet:1915a}
Maurice Fr\'echet.
\newblock D\'efinition de l'int\'egrale sur un ensemble abstrait.
\newblock \emph{Comptes rendus hebdomadaires des s\'eances de l'Acad\'emie des
  Sciences}, 160:\penalty0 839--840, 1915a.

\bibitem[Fr\'echet(1915b)]{frechet:1915b}
Maurice Fr\'echet.
\newblock Sur l'int\'egrale d'une fonctionnelle \'etendue \`a un ensemble
  abstrait.
\newblock \emph{Bulletin de la Soci\'et\'e math\'ematique de France},
  43:\penalty0 248--265, 1915b.

\bibitem[Fr\'echet(1930)]{frechet:1930}
Maurice Fr\'echet.
\newblock Sur l'extension du th\'eor\`eme des probabilit\'es totales au cas
  d'une suite infinie d'\'ev\'enements.
\newblock \emph{Rendiconti del Reale Istituto Lombardo di Scienze e Lettere},
  63:\penalty0 899--900, 1059--1062, 1930.

\bibitem[Fr\'echet(1937)]{frechet:1937}
Maurice Fr\'echet.
\newblock \emph{G\'en\'eralit\'es sur les probabilit\'es. Variables
  al\'eatoires}.
\newblock Gauthier-Villars, Paris, 1937.
\newblock This is Book 1 of Fr\'echet (1937--1938)\nocite{frechet:1937/1938}.
  The second edition (1950) has a slightly different title: {\em
  G\'en\'eralit\'es sur les probabilit\'es. \'El\'ements al\'eatoires}.

\bibitem[Fr\'echet(1937--1938)]{frechet:1937/1938}
Maurice Fr\'echet.
\newblock \emph{Recherches th\'eoriques modernes sur la th\'eorie des
  probabilit\'es}.
\newblock Gauthier-Villars, Paris, 1937--1938.
\newblock This work is listed in the bibliography of the \emph{Grundbegriffe}
  as in preparation. It consists of two books, Fr\'echet
  (1937)\nocite{frechet:1937} and Fr\'echet (1938a)\nocite{frechet:1938a}. The
  two books together constitute Fascicle 3 of Volume 1 of \'Emile Borel's
  \emph{Trait\'e du calcul des probabilit\'es et ses applications}.

\bibitem[Fr\'echet(1938{\natexlab{a}})]{frechet:1938a}
Maurice Fr\'echet.
\newblock \emph{M\'ethode des fonctions arbitraires. Th\'eorie des
  \'ev\'enements en cha\^ine dans le cas d'un nombre fini d'\'etats possibles}.
\newblock Gauthier-Villars, Paris, 1938{\natexlab{a}}.
\newblock This is Book 2 of Fr\'echet (1937--1938)\nocite{frechet:1937/1938}.
  Second edition 1952.

\bibitem[Fr\'echet(1938{\natexlab{b}})]{frechet:1938b}
Maurice Fr\'echet.
\newblock Expos\'e et discussion de quelques recherches r\'ecentes sur les
  fondements du calcul des probabilit\'es.
\newblock In  \citet{wavre:1938}, pages 23--55 of the second fascicle, number
  735, \emph{Les fondements du calcul des probabilit\'es}.
\newblock This celebrated colloquium, chaired by Maurice Fr\'echet, was held in
  October 1937 at the University of Geneva. Participants included Cram\'er,
  D{\oe}blin, Feller, de Finetti, Heisenberg, Hopf, L\'evy, Neyman, P\`olya,
  Steinhaus, and Wald, and communications were received from Bernstein,
  Cantelli, Glivenko, Jordan, Kolmogorov, von Mises, and Slutsky. The
  proceedings were published by Hermann in eight fascicles in their series
  \textit{Actualit\'es Scientifiques et Industrielles}. The first seven
  fascicles appeared in 1938 as numbers 734 through 740; the eighth, de
  Finetti's summary of the colloquium, appeared in 1939 as number 766. See de
  Finetti (1939)\nocite{definetti:1939}.

\bibitem[Fr\'echet(1951)]{frechet:1951}
Maurice Fr\'echet.
\newblock Rapport g\'en\'eral sur les travaux du {C}olloque de {C}alcul des
  {P}robabilit\'es.
\newblock In  \citet{bayer:1951}, pages 3--21.

\bibitem[Fr\'echet(1955)]{frechet:1955}
Maurice Fr\'echet.
\newblock \emph{Les math\'ematiques et le concret}.
\newblock Presses Universitaires de France, Paris, 1955.

\bibitem[Fr\'echet and Halbwachs(1924)]{frechet/halbwachs:1924}
Maurice Fr\'echet and Maurice Halbwachs.
\newblock \emph{Le calcul des probabilit\'es \`a la port\'ee de tous}.
\newblock Dunod, Paris, 1924.

\bibitem[Freudenthal and Steiner(1966)]{freudenthal/steiner:1966}
Hans Freudenthal and Hans-Georg Steiner.
\newblock Aus der {G}eschichte der {W}ahrscheinlichkeitstheorie und der
  mathematischen {S}tatistik.
\newblock In  \citet{behnke/bertram/sauer:1966}, pages 149--195.

\bibitem[Fry(1928)]{fry:1928}
Thornton~C. Fry.
\newblock \emph{Probability and Its Engineering Uses}.
\newblock Van Nostrand, New York, 1928.

\bibitem[Gani(1982)]{gani:1982}
Joseph Gani, editor.
\newblock \emph{The Making of Statisticians}.
\newblock Springer, New York, 1982.

\bibitem[Gario(2001)]{gario:2001}
Paola Gario.
\newblock Guido {C}astelnuovo: {D}ocuments for a biography.
\newblock \emph{Historia Mathematica}, 28:\penalty0 48--53, 2001.

\bibitem[Glivenko(1939)]{glivenko:1939}
Vasily~I. Glivenko.
\newblock \emph{Курс теории вероятностей (Course of
  Probability Theory)}.
\newblock GONTI, Moscow, 1939.

\bibitem[Gnedenko(1950)]{gnedenko:1950}
Boris~V. Gnedenko.
\newblock \emph{Курс теории вероятностей (Theory of
  Probability)}.
\newblock Nauka, Moscow, first edition, 1950.
\newblock Sixth edition 1988.

\bibitem[Gnedenko and Kolmogorov(1948)]{gnedenko/kolmogorov:1948}
Boris~V. Gnedenko and Andrei~N. Kolmogorov.
\newblock Теория вероятностей ({P}robability theory).
\newblock In \emph{Математика в СССР за тридцать
  лет, 1917--1947 (Thirty Years of Soviet Mathematics, 1917--1947)}, pages
  701--727. Гостехиздат, Moscow and Leningrad, 1948.
\newblock English translation in Sheynin (1998a)\nocite{sheynin:1998a},
  pp.~131--158.

\bibitem[Gnedenko and Kolmogorov(1949)]{gnedenko/kolmogorov:1949}
Boris~V. Gnedenko and Andrei~N. Kolmogorov.
\newblock \emph{Предельные распределения для сумм
  независимых случайных величин}.
\newblock State Publishing House, Moscow, 1949.
\newblock Translated into English by Kai L. Chung: \emph{Limit Distributions
  for Sums of Independent Random Variables} by Addison-Wesley, Cambridge,
  Massachusetts, in 1954, with an appendix by Joseph L. Doob.

\bibitem[Haberman(1996)]{haberman:1996}
Shelby~J. Haberman.
\newblock \emph{Advanced Statistics, Volume I: Description of Populations}.
\newblock Springer, New York, 1996.

\bibitem[Hacohen(2000)]{hacohen:2000}
Malachi~Haim Hacohen.
\newblock \emph{Karl Popper: The Formative Years, 1902--1945}.
\newblock Cambridge University Press, Cambridge, 2000.

\bibitem[Hadamard(1922)]{hadamard:1922}
Jacques Hadamard.
\newblock Les principes du calcul des probabilit\'es.
\newblock \emph{Revue de m\'etaphysique et de morale}, 39:\penalty0 289--293,
  1922.
\newblock A slightly longer version of this note, with the title ``Les axiomes
  du calcul des probabilit\'es'', was included in \emph{Oeuvres de Jacques
  Hadamard}, Tome IV, pp.~2161--2162. Centre National de la Recherche
  Scientifique, Paris, 1968.

\bibitem[Halmos(1950)]{halmos:1950}
Paul~R. Halmos.
\newblock \emph{Measure Theory}.
\newblock Von Nostrand, New York, 1950.

\bibitem[Halmos(1985)]{halmos:1985}
Paul~R. Halmos.
\newblock \emph{I want to be a mathematician: An automathography}.
\newblock Springer, New York, 1985.

\bibitem[Halpern and Tuttle(1993)]{halpern/tuttle:1993}
Joseph~Y. Halpern and Mark~R. Tuttle.
\newblock Knowledge, probability, and adversaries.
\newblock \emph{Journal of the ACM}, 40\penalty0 (4):\penalty0 917--962, 1993.

\bibitem[Hausdorff(1901)]{hausdorff:1901}
Felix Hausdorff.
\newblock Beitr\"{a}ge zur {W}ahrscheinlichkeitsrechnung.
\newblock \emph{Sitzungsberichte der K\"{o}niglich S\"{a}chsischen Gesellschaft
  der Wissenschaften zu Leipzig, Mathematisch-Physische Klasse}, 53:\penalty0
  152--178, 1901.

\bibitem[Hausdorff(1914)]{hausdorff:1914}
Felix Hausdorff.
\newblock \emph{Grundz\"{u}ge der Mengenlehre}.
\newblock Von Veit, Leipzig, 1914.

\bibitem[Hawkins(1975)]{hawkins:1975}
Thomas Hawkins.
\newblock \emph{Lebesgue's Theory of Integration: Its Origins and Development}.
\newblock Chelsea, New York, second edition, 1975.
\newblock First edition 1970, University of Wisconsin Press, Madison. The
  second edition differs only slightly from the first, but it corrects a
  consequential error on p.~104. Second edition reprinted in 1979 by Chelsea,
  New York, and then in 2001 by the American Mathematical Society, Providence,
  RI.

\bibitem[Helm(1902)]{helm:1902}
Georg Helm.
\newblock Die {W}ahrscheinlichkeitslehre als {T}heorie der {K}ollektivbegriffe.
\newblock \emph{Annalen der Naturphilosophie}, I:\penalty0 364--384, 1902.

\bibitem[Heyde and Seneta(2001)]{heyde/seneta:2001}
Chris~C. Heyde and Eugene Seneta, editors.
\newblock \emph{Statisticians of the Centuries}.
\newblock Springer, New York, 2001.

\bibitem[Hilbert(1902)]{hilbert:1902}
David Hilbert.
\newblock Mathematical problems.
\newblock \emph{Bulletin of the American Mathematical Society}, 8:\penalty0
  437--479, 1902.
\newblock Hilbert's famous address to the International Congress of
  Mathematicians in Paris in 1900, in which he listed twenty-three open
  problems central to mathematics. Translated from the German by
  Mary~W.~Newson.

\bibitem[Hildebrandt(1917)]{hildebrandt:1917}
Theophil~H. Hildebrandt.
\newblock On integrals related to and extensions of the {L}ebesgue integral.
\newblock \emph{Bulletin of the American Mathematical Society}, 24:\penalty0
  113--144, 177--202, 1917.

\bibitem[Hochkirchen(1999)]{hochkirchen:1999}
Thomas Hochkirchen.
\newblock \emph{Die Axiomatisierung der Wahrscheinlichkeitsrechnung und ihre
  Kontexte: Von Hilberts sechstem Problem zu Kolmogoroffs Grundbegriffen}.
\newblock Vandenhoeck \& Ruprecht, G\"ottingen, 1999.

\bibitem[Holgate(1997)]{holgate:1997}
Philip Holgate.
\newblock Independent functions: {P}robability and analysis in {P}oland between
  the wars.
\newblock \emph{Biometrika}, 84:\penalty0 161--173, 1997.

\bibitem[Hopf(1934)]{hopf:1934}
Eberhard Hopf.
\newblock On causality, statistics, and probability.
\newblock \emph{Journal of Mathematics and Physics}, 13:\penalty0 51--102,
  1934.

\bibitem[Jeffreys(1931)]{jeffreys:1931}
Harold Jeffreys.
\newblock \emph{Scientific Inference}.
\newblock Cambridge University Press, Cambridge, 1931.
\newblock Second edition 1957, third 1973.

\bibitem[Jeffreys(1939)]{jeffreys:1939}
Harold Jeffreys.
\newblock \emph{Theory of Probability}.
\newblock Oxford University Press, Oxford, 1939.
\newblock Second edition 1948, third 1961.

\bibitem[Jessen(1930)]{jessen:1930}
B{\o}rge Jessen.
\newblock \"uber eine {L}ebesguesche {I}ntegrationstheorie f\"ur {F}unktionen
  unendlich vieler {V}er\"anderlichen.
\newblock In \emph{Den Syvende Skandinaviske Mathatikerkongress I Oslo 19--22
  August 1929}, pages 127--138, Oslo, 1930. A. W. Br{\o}ggers Boktrykkeri.

\bibitem[Jessen(1935)]{jessen:1935}
B{\o}rge Jessen.
\newblock Some analytical problems relating to probability.
\newblock \emph{Journal of Mathematics and Physics}, 14:\penalty0 24--27, 1935.

\bibitem[Johnson and Kotz(1997)]{johnson/kotz:1997}
Norman~L. Johnson and Samuel Kotz.
\newblock \emph{Leading Personalities in Statistical Sciences}.
\newblock Wiley, New York, 1997.

\bibitem[Kac(1982)]{kac:1982}
Mark Kac.
\newblock The search for the meaning of independence.
\newblock In  \citet{gani:1982}, pages 62--72.

\bibitem[Kac(1985)]{kac:1985}
Mark Kac.
\newblock \emph{Enigmas of Chance, an Autobiography}.
\newblock Harper and Row, New York, 1985.

\bibitem[Kahane(1994)]{kahane:1994}
Jean-Pierre Kahane.
\newblock Des s\'eries de {T}aylor au mouvement brownien, avec un aper\c{c}u
  sur le retour.
\newblock In  \citet{pier:1994}, pages 415--429.

\bibitem[Kamlah(1983)]{kamlah:1983}
Andreas Kamlah.
\newblock Probability as a quasi-theoretical concept; {J}.~{V}.~{K}ries'
  sophisticated account after a century.
\newblock \emph{Erkenntnis}, 19:\penalty0 239--251, 1983.

\bibitem[Keynes(1921)]{keynes:1921}
John~Maynard Keynes.
\newblock \emph{A {T}reatise on {P}robability}.
\newblock Macmillan, London, 1921.

\bibitem[Khinchin(1928)]{khinchin:1928}
Aleksandr~Ya. Khinchin.
\newblock Sur la loi forte des grands nombres.
\newblock \emph{Comptes rendus des S\'eances de l'Acad\'emie des Sciences},
  186:\penalty0 285--287, 1928.

\bibitem[Khinchin(1929)]{khinchin:1929}
Aleksandr~Ya. Khinchin.
\newblock Учение {М}изеса о вероятностях и
  принципы физической статистики ({M}ises's work on
  probability and the principles of statistical physics).
\newblock \emph{Успехи физических наук}, 9:\penalty0
  141--166, 1929.

\bibitem[Khinchin(1934)]{khinchin:1934}
Aleksandr~Ya. Khinchin.
\newblock Korrelationstheorie der station\"aren stochastischen {P}rozesse.
\newblock \emph{Mathematischen Annalen}, 109:\penalty0 604--615, 1934.

\bibitem[Khinchin(1961)]{khinchin:1961}
Aleksandr~Ya. Khinchin.
\newblock On the {M}ises frequentist theory.
\newblock \emph{Вопросы философии (Questions of Philosophy)},
  15\penalty0 (1 and 2):\penalty0 91--102 and 77--89, 1961.
\newblock Published after Khinchin's death by Boris Gnedenko. English
  translation in Sheynin (1998) 99--137, reproduced with footnotes by Reinhard
  Siegmund in \textit{Science in Context} \textbf{17} 391-422 (2004). We have
  seen only this English translation, not the original.

\bibitem[Khinchin and Kolmogorov(1925)]{khinchin/kolmogorov:1925}
Aleksandr~Ya. Khinchin and Andrei~N. Kolmogorov.
\newblock {\"U}ber {K}onvergenz von {R}eihen, deren {G}lieder durch den
  {Z}ufall bestimmt werden.
\newblock \emph{Математический сборник. (Sbornik:
  Mathematics)}, 32:\penalty0 668--677, 1925.
\newblock Translated into Russian on pp.~7--16 of \citet{kolmogorov:1986} and
  thence into English on pp.~1--10 of \citet{kolmogorov:1992}.

\bibitem[Knobloch(2001)]{knobloch:2001}
Eberhard Knobloch.
\newblock Emile {B}orel's view of probability theory.
\newblock In Vincent~F. Hendricks, Stig~Arthur Pedersen, and Klaus~Frovin
  J{\o}rgensen, editors, \emph{Probability Theory: Philosophy, Recent History
  and Relations to Science}, pages 71--95. Kluwer, Dordrecht, 2001.

\bibitem[Kolmogorov(1923)]{kolmogorov:1923}
Andrei~N. Kolmogorov.
\newblock Une s\'erie de {F}ourier-{L}ebesgue divergente presque partout.
\newblock \emph{Fundamenta Mathematicae}, 4:\penalty0 324--328, 1923.
\newblock Translated into Russian on pp.~8--11 of \citet{kolmogorov:1985} and
  thence into English on pp.~1--7 of \citet{kolmogorov:1991}.

\bibitem[Kolmogorov(1925a)]{kolmogorov:1925a}
Andrei~N. Kolmogorov.
\newblock La d\'efinition axiomatique de l'int\'egrale.
\newblock \emph{Comptes rendus hebdomadaires des s\'eances de l'Acad\'emie des
  Sciences}, 180:\penalty0 110--111, 1925a.
\newblock Translated into Russian on pp.~19--20 of \citet{kolmogorov:1985} and
  thence into English on pp.~13--14 of \citet{kolmogorov:1991}.

\bibitem[Kolmogorov(1925b)]{kolmogorov:1925b}
Andrei~N. Kolmogorov.
\newblock Sur la possibilit\'e de la d\'efinition g\'en\'erale de la
  d\'eriv\'ee, de l'int\'egrale et de la sommation des s\'eries divergentes.
\newblock \emph{Comptes rendus hebdomadaires des s\'eances de l'Acad\'emie des
  Sciences}, 180:\penalty0 362--364, 1925b.
\newblock Translated into Russian on pp.~39--40 of \citet{kolmogorov:1985} and
  thence into English on pp.~33--34 of \citet{kolmogorov:1991}.

\bibitem[Kolmogorov(1928)]{kolmogorov:1928}
Andrei~N. Kolmogorov.
\newblock {\"U}ber die {S}ummen durch den {Z}ufall bestimmter unabh\"angiger
  {G}r\"ossen.
\newblock \emph{Mathematische Annalen}, 99:\penalty0 309--319, 1928.
\newblock An addendum appears in 1929: Volume 102, pp.~484--488. The article
  and the addendum are translated into Russian on pp.~20--34 of
  \citet{kolmogorov:1986} and thence into English on pp.~15--31 of
  \citet{kolmogorov:1992}.

\bibitem[Kolmogorov(1929{\natexlab{a}})]{kolmogorov:1929}
Andrei~N. Kolmogorov.
\newblock Общая теория меры и исчисление
  вероятностей ({T}he general theory of measure and the calculus of
  probability).
\newblock \emph{Сборник работ математического
  раздела, Коммунистическая академия,
  Секция естественных и точных наук (Collected
  Works of the Mathematical Chapter, Communist Academy, Section for Natural and
  Exact Science)}, 1:\penalty0 8--21, 1929{\natexlab{a}}.
\newblock The Socialist Academy was founded in Moscow in 1918 and was renamed
  the Communist Academy in 1923~\citet{vucinich:2000}. The date 8 January 1927,
  which appears at the end of the article in the journal, was omitted when the
  article was reproduced in the second volume of Kolmogorov's collected works
  (\citet{kolmogorov:1986}, pp.~48--58). The English translation, on pp.~48--59
  of \citet{kolmogorov:1992}, modernizes the article's terminology somewhat:
  $M$ becomes a ``measure'' instead of a ``measure specification''.

\bibitem[Kolmogorov(1929{\natexlab{b}})]{kolmogorov:1929b}
Andrei~N. Kolmogorov.
\newblock \"{U}ber das {G}esetz der iterierten {L}ogarithmus.
\newblock \emph{Mathematische Annalen}, 101:\penalty0 126--135,
  1929{\natexlab{b}}.
\newblock Translated into Russian on pp.~34--44 of \citet{kolmogorov:1986} and
  thence into English on pp.~32--42 of \citet{kolmogorov:1992}.

\bibitem[Kolmogorov(1930a)]{kolmogorov:1930a}
Andrei~N. Kolmogorov.
\newblock Untersuchungen \"uber den {I}ntegralbegriff.
\newblock \emph{Mathematische Annalen}, 103:\penalty0 654--696, 1930a.
\newblock Translated into Russian on pp.~96--136 of \citet{kolmogorov:1985} and
  into English on pp.~100--143 of \citet{kolmogorov:1991}.

\bibitem[Kolmogorov(1930b)]{kolmogorov:1930b}
Andrei~N. Kolmogorov.
\newblock Sur la loi forte des grands nombres.
\newblock \emph{Comptes rendus hebdomadaires des s\'eances de l'Acad\'emie des
  Sciences}, 191:\penalty0 910--912, 1930b.
\newblock Translated into Russian on pp.~44--47 of \citet{kolmogorov:1986} and
  thence into English on pp.~60--61 of \citet{kolmogorov:1992}.

\bibitem[Kolmogorov(1931)]{kolmogorov:1931}
Andrei~N. Kolmogorov.
\newblock {\"U}ber die analytischen {M}ethoden in der
  {W}ahrscheinlichkeitsrechnung.
\newblock \emph{Mathematische Annalen}, 104:\penalty0 415--458, 1931.
\newblock Translated into Russian on pp.~60--105 of \citet{kolmogorov:1985} and
  thence into English on pp.~62--108 of \citet{kolmogorov:1992}.

\bibitem[Kolmogorov(1933)]{kolmogorov:1933}
Andrei~N. Kolmogorov.
\newblock \emph{Grundbegriffe der Wahrscheinlichkeitsrechnung}.
\newblock Springer, Berlin, 1933.
\newblock A Russian translation, by Grigory~M. Bavli, appeared under the title
  Основные понятия теории вероятностей (Nauka,
  Moscow) in 1936, with a second edition, slightly expanded by Kolmogorov with
  the assistance of Albert~N. Shiryaev, in 1974, and a third edition
  (ФАЗИС, Moscow) in 1998. An English translation by Nathan Morrison
  appeared under the title \emph{Foundations of the Theory of Probability}
  (Chelsea, New York) in 1950, with a second edition in 1956.

\bibitem[Kolmogorov(1934a)]{kolmogorov:1934a}
Andrei~N. Kolmogorov.
\newblock Zuf\"allige {B}ewegungen ({Z}ur {T}heorie der {B}rownschen
  {B}ewegung).
\newblock \emph{Annals of Mathematics}, 35:\penalty0 116--117, January 1934a.
\newblock Translated into Russian on pp.~168--170 of \citet{kolmogorov:1986}
  and thence into English on pp.~176--178 of \citet{kolmogorov:1992}.

\bibitem[Kolmogorov(1934b)]{kolmogorov:1934b}
Andrei~N. Kolmogorov.
\newblock Review of {L}\'evy (1934)\nocite{levy:1934}.
\newblock \emph{Zentralblatt f\"ur Mathematik und ihre Grenzegebiete},
  8:\penalty0 367, 1934b.

\bibitem[Kolmogorov(1935)]{kolmogorov:1935}
Andrei~N. Kolmogorov.
\newblock О некоторых новых течениях в теории
  вероятностей ({O}n some modern currents in the theory of
  probability).
\newblock In \emph{Труды 2-го Всесоюзного
  математического съезда, Ленинград, 24--30
  июня 1934 г. (Proceedings of the 2nd All-Union Mathematical Congress,
  Leningrad, 24--30 June 1934)}, volume 1 (Plenary sessions and review talks),
  pages 349--358, Leningrad and Moscow, 1935. Издательство АН
  СССР.
\newblock English translation in Sheynin (2000)\nocite{sheynin:2000},
  pp.~165--173.

\bibitem[Kolmogorov(1939)]{kolmogorov:1939}
Andrei~N. Kolmogorov.
\newblock Letter to {M}aurice {F}r\'echet.
\newblock Fonds Fr\'echet, Archives de l'Acad\'emie des Sciences, Paris, 1939.

\bibitem[Kolmogorov(1948a)]{kolmogorov:1948a}
Andrei~N. Kolmogorov.
\newblock {Е}вгений Евгениевич Слуцкий;\
  {Н}екролог ({O}bituary for {E}vgeny {E}vgenievich {S}lutsky).
\newblock \emph{Успехи математических наук (Russian
  Mathematical Surveys)}, III\penalty0 (4):\penalty0 142--151, 1948a.
\newblock English translation in Sheynin (1998a)\nocite{sheynin:1998a},
  pp.~77--88, reprinted in \emph{Mathematical Scientist} 27:67--74, 2002.

\bibitem[Kolmogorov(1948b)]{kolmogorov:1948b}
Andrei~N. Kolmogorov.
\newblock The main problems of theoretical statistics (abstract).
\newblock In \emph{Второе всесоюзное совещание по
  математической статистике (Second National Conference
  on Mathematical Statistics)}, pages 216--220, Tashkent, 1948b.
\newblock English translation in Sheynin (1998b)\nocite{sheynin:1998b},
  pp.~219--224.

\bibitem[Kolmogorov(1951)]{kolmogorov:1951}
Andrei~N. Kolmogorov.
\newblock Вероятность ({P}robability).
\newblock In \emph{Большая Советская Энциклопедия
  (Great Soviet Encyclopedia)}, volume~7, pages 508--510. Soviet Encyclopedia
  Publishing House, Moscow, second edition, 1951.
\newblock The same article appears on p.~544 of Vol. 4 of the third edition of
  the encyclopedia, published in 1971, and in a subsidiary work, the
  Математическая энциклопедия, published in 1987.
  English translations of both encyclopedias exist.

\bibitem[Kolmogorov(1954)]{kolmogorov:1954}
Andrei~N. Kolmogorov.
\newblock Summary, in {R}ussian, of his address to a conference on statistics
  in {M}oscow in 1954.
\newblock In  \citet{anonymous:1954}, pages 46--47.
\newblock English translation in Sheynin (1998b)\nocite{sheynin:1998b},
  pp.~225--226.

\bibitem[Kolmogorov(1956)]{kolmogorov:1956}
Andrei~N. Kolmogorov.
\newblock Теория вероятностей ({P}robability theory).
\newblock In  \citet{aleksandrov/etal:1956}, pages Chapter XI; 33--71 of Part 4
  in the 1963 English edition; 252--284 of Volume 2 in the 1965 English
  edition.
\newblock The Russian edition had three volumes. The English translation,
  \emph{Mathematics, Its Content, Methods, and Meaning}, was first published in
  1962 and 1963 in six volumes by the American Mathematical Society,
  Providence, RI, and then republished in 1965 in three volumes by the MIT
  Press, Cambridge, MA. Reprinted by Dover, New York, 1999.

\bibitem[Kolmogorov(1963)]{kolmogorov:1963}
Andrei~N. Kolmogorov.
\newblock On tables of random numbers.
\newblock \emph{Sankhya, The Indian Journal of Statistics. Series~A}, A
  25:\penalty0 369--376, 1963.
\newblock Translated into Russian on pp.~204--213 of \citet{kolmogorov:1987}
  and back into English on pp.~176--183 of \citet{kolmogorov:1993}.

\bibitem[Kolmogorov(1965)]{kolmogorov:1965}
Andrei~N. Kolmogorov.
\newblock Three approaches to the quantitative definition of information.
\newblock \emph{Problems of Information Transmission}, 1:\penalty0 1--7, 1965.
\newblock Translated into Russian on pp.~213--223 of \citet{kolmogorov:1987}
  and back into English as ``Three approaches to the definition of the notion
  of amount of information'' on pp.~184--193 of \citet{kolmogorov:1993}.

\bibitem[Kolmogorov(1968)]{kolmogorov:1968}
Andrei~N. Kolmogorov.
\newblock Logical basis for information theory and probability theory.
\newblock \emph{IEEE Transactions on Information Theory}, IT-14:\penalty0
  662--664, 1968.
\newblock Translated into Russian on pp.~232--237 of \citet{kolmogorov:1987}
  and back into English as ``To the logical foundations of the theory of
  information and probability theory'' on pp.~203--207 of
  \citet{kolmogorov:1993}.

\bibitem[Kolmogorov(1985)]{kolmogorov:1985}
Andrei~N. Kolmogorov.
\newblock \emph{Избранные труды. Математика и
  механика}.
\newblock Nauka, Moscow, 1985.

\bibitem[Kolmogorov(1986)]{kolmogorov:1986}
Andrei~N. Kolmogorov.
\newblock \emph{Избранные труды. Теория
  вероятностей и математическая
  статистика}.
\newblock Nauka, Moscow, 1986.

\bibitem[Kolmogorov(1987)]{kolmogorov:1987}
Andrei~N. Kolmogorov.
\newblock \emph{Избранные труды. Теория
  информации и теория алгоритмов}.
\newblock Nauka, Moscow, 1987.

\bibitem[Kolmogorov(1991)]{kolmogorov:1991}
Andrei~N. Kolmogorov.
\newblock \emph{Selected Works of A.~N.~Kolmogorov. Volume~I: Mathematics and
  Mechanics}.
\newblock Nauka, Moscow, 1991.
\newblock Translation by V.~M.~Volosov of \citet{kolmogorov:1985}.

\bibitem[Kolmogorov(1992)]{kolmogorov:1992}
Andrei~N. Kolmogorov.
\newblock \emph{Selected Works of A.~N.~Kolmogorov. Volume~II: Probability
  Theory and Mathematical Statistics}.
\newblock Kluwer, Dordrecht, 1992.
\newblock Translation by G.~Lindquist of \citet{kolmogorov:1986}.

\bibitem[Kolmogorov(1993)]{kolmogorov:1993}
Andrei~N. Kolmogorov.
\newblock \emph{Selected Works of A.~N.~Kolmogorov. Volume~III: Information
  Theory and the Theory of Algorithms}.
\newblock Kluwer, Dordrecht, 1993.
\newblock Translation by A.~B.~Sossinsky of \citet{kolmogorov:1987}.

\bibitem[Kolmogorov and Fomin(1999)]{kolmogorov/fomin:1999}
Andrei~N. Kolmogorov and Sergei~V. Fomin.
\newblock \emph{Elements of the Theory of Functions and Functional Analysis}.
\newblock Dover, New York, 1999.

\bibitem[Koopman(1940)]{koopman:1940}
Bernard~O. Koopman.
\newblock The axioms and algebra of intuitive probability.
\newblock \emph{Annals of Mathematics}, 41:\penalty0 269--292, 1940.

\bibitem[Kotz(1965)]{kotz:1965}
Samuel Kotz.
\newblock Statistics in the {USSR}.
\newblock \emph{Survey}, 57:\penalty0 132--141, 1965.

\bibitem[Krengel(1990)]{krengel:1990}
Ulrich Krengel.
\newblock Wahrscheinlichkeitstheorie.
\newblock In Gerd Fischer, Friedrich Hirzebruch, Winfried Scharlau, and Willi
  T\"ornig, editors, \emph{Ein Jahrhundert Mathematik, 1890--1990. Festschrift
  zum Jubil\"aum der Deutsche Mathematiker-Vereingigung}, pages 457--489.
  Vieweg, Braunschweig and Wiesbaden, 1990.

\bibitem[Lacroix(1822)]{lacroix:1822}
Sylvestre-Fran\c{c}ois Lacroix.
\newblock \emph{Trait\'e \'el\'ementaire du calcul des probabilit\'es}.
\newblock Bachelier, Paris, 1822.
\newblock First edition 1816.

\bibitem[Laemmel(1904)]{laemmel:1904}
Rudolf Laemmel.
\newblock \emph{Untersuchungen \"uber die Ermittlung von Wahrscheinlichkeiten}.
\newblock PhD thesis, Universit\"at Z\"urich, 1904.
\newblock Excerpts reprinted on pp.~367--377 of \citet{schneider:1988}.

\bibitem[Laplace(1812)]{laplace:1812}
Pierre~Simon Laplace.
\newblock \emph{Th\'eorie analytique des probabilit\'es}.
\newblock Courcier, Paris, 1812.
\newblock Second edition 1814, third 1820. The third edition was reprinted in
  Volume 7 of Laplace's \emph{{\OE}uvres compl\`etes}.

\bibitem[Laplace(1814)]{laplace:1814}
Pierre~Simon Laplace.
\newblock \emph{Essai philosophique sur les probabilit\'es}.
\newblock Courcier, Paris, 1814.
\newblock The essay first appeared as part of the second edition of the
  \emph{Th\'eorie analytique}. The fifth and definitive edition of the essay
  appeared in 1825. A modern edition of the essay, edited by Bernard Bru, was
  published by Christian Bourgois, Paris, in 1986. The most recent English
  translation, by Andrew I.~Dale, \emph{Philosophical Essay on Probabilities},
  was published by Springer, New York, in 1994.

\bibitem[Lauritzen(2002)]{lauritzen:2002}
Steffen~L. Lauritzen.
\newblock \emph{Thiele. Pioneer in Statistics}.
\newblock Oxford University Press, Oxford, 2002.

\bibitem[{L}e Lionnais(1948)]{lionnais:1948}
Fran\c{c}ois {L}e Lionnais, editor.
\newblock \emph{Les grands courants de la pens\'ee math\'ematique}.
\newblock Cahiers du Sud, Paris, 1948.
\newblock This volume was reprinted by Blanchard in 1962 and by Hermann in
  1998. An English translation, \emph{Great Currents of Mathematical Thought},
  was published by Dover, New York, in 1971.

\bibitem[Lebesgue(1901)]{lebesgue:1901}
Henri Lebesgue.
\newblock Sur une g\'en\'eralisation de l'int\'egrale d\'efinie.
\newblock \emph{Comptes rendus des S\'eances de l'Acad\'emie des Sciences},
  132:\penalty0 1025--1028, 1901.

\bibitem[Lebesgue(1904)]{lebesgue:1904}
Henri Lebesgue.
\newblock \emph{Le\c{c}ons sur l'int\'egration et la recherche des fonctions
  primitives}.
\newblock Gauthier-Villars, Paris, 1904.
\newblock Second edition 1928.

\bibitem[L\'evy(1925)]{levy:1925}
Paul L\'evy.
\newblock \emph{Calcul des probabilit\'es}.
\newblock Gauthier-Villars, Paris, 1925.

\bibitem[L\'evy(1934)]{levy:1934}
Paul L\'evy.
\newblock G\'en\'eralisation de l'espace diff\'erentiel de {N}.~{W}iener.
\newblock \emph{Comptes rendus hebdomadaires des s\'eances de l'Acad\'emie des
  Sciences}, 198:\penalty0 786--788, 1934.

\bibitem[L\'evy(1937)]{levy:1937}
Paul L\'evy.
\newblock \emph{Th\'eorie de l'addition des variables al\'eatoires}.
\newblock Gauthier-Villars, Paris, 1937.
\newblock Second edition 1954.

\bibitem[L\'evy(1949)]{levy:1949}
Paul L\'evy.
\newblock Les fondements du calcul des probabilit\'es.
\newblock \emph{Dialectica}, 3\penalty0 (9/10):\penalty0 55--64, 1949.

\bibitem[L\'evy(1954)]{levy:1954}
Paul L\'evy.
\newblock Les fondements du calcul des probabilit\'es.
\newblock \emph{Revue de m\'etaphysique et de morale}, 59\penalty0
  (2):\penalty0 164--179, 1954.
\newblock Reprinted in \citet{levy:1980}, Volume VI, pp.~95--110.

\bibitem[L\'evy(1959)]{levy:1959}
Paul L\'evy.
\newblock Un paradoxe de la th\'eorie des ensembles al\'eatoire.
\newblock \emph{Comptes rendus des S\'eances de l'Acad\'emie des Sciences},
  248:\penalty0 181--184, 1959.
\newblock Reprinted in \citet{levy:1980}, Volume VI, pp.~67--69.

\bibitem[L\'evy(1970)]{levy:1970}
Paul L\'evy.
\newblock \emph{Quelques aspects de la pens\'ee d'un math\'ematicien}.
\newblock Blanchard, Paris, 1970.

\bibitem[L\'evy(1973--1980)]{levy:1980}
Paul L\'evy.
\newblock \emph{{\OE}uvres de Paul L\'evy}.
\newblock Gauthier-Villars, Paris, 1973--1980.
\newblock {I}n six volumes. Edited by Daniel Dugu\'e.

\bibitem[Li and Vit\'anyi(1997)]{li/vitanyi:1997}
Ming Li and Paul Vit\'anyi.
\newblock \emph{An Introduction to {K}olmogorov Complexity and Its
  Applications}.
\newblock Springer, New York, second edition, 1997.

\bibitem[Liagre(1879)]{liagre:1879}
Jean-Baptiste-Joseph Liagre.
\newblock \emph{Calcul des probabilit\'es et th\'eorie des erreurs avec des
  applications aux sciences d'observation en g\'en\'eral et \`a la g\'eod\'esie
  en particulier}.
\newblock Muquardt, Brussels, second edition, 1879.
\newblock First edition 1852. Second edition prepared with the assistance of
  Camille Peny.

\bibitem[Lo\`eve(1955)]{loeve:1955}
Michel Lo\`eve.
\newblock \emph{Probability theory: foundations, random sequences}.
\newblock Van Nostrand, Princeton, NJ, 1955.
\newblock Second edition 1960.

\bibitem[{\L}omnicki(1923)]{lomnicki:1923}
Antoni {\L}omnicki.
\newblock Nouveaux fondements du calcul des probabilit\'es ({D}\'efinition de
  la probabilit\'e fond\'ee sur la th\'eorie des ensembles).
\newblock \emph{Fundamenta Mathematicae}, 4:\penalty0 34--71, 1923.

\bibitem[{\L}omnicki and Ulam(1934)]{lomnicki/ulam:1934}
Zbigniew {\L}omnicki and Stanis{\l}aw Ulam.
\newblock Sur la th\'eorie de la mesure dans les espaces combinatoires et son
  application au calcul des probabilit\'es. {I}. {V}ariables ind\'ependantes.
\newblock \emph{Fundamenta Mathematicae}, 23:\penalty0 237--278, 1934.

\bibitem[Lorentz(2002)]{lorentz:2002}
George~G. Lorentz.
\newblock Mathematics and politics in the {S}oviet {U}nion from 1928 to 1953.
\newblock \emph{Journal of Approximation Theory}, 116:\penalty0 169--223, 2002.

\bibitem[Loveland(2001)]{loveland:2001}
Jeff Loveland.
\newblock Buffon, the certainty of sunrise, and the probabilistic reductio ad
  absurdum.
\newblock \emph{Archive for History of Exact Sciences}, 55:\penalty0 465--477,
  2001.

\bibitem[{\L}ukasiewicz(1913)]{lukasiewicz:1913}
Jan {\L}ukasiewicz.
\newblock \emph{Die logischen Grundlagen der Warhscheinlichkeitsrechnung}.
\newblock Wspolka Wydawnicza Polska, Krak\'ow, 1913.

\bibitem[MacColl(1880)]{maccoll:1880}
Hugh MacColl.
\newblock The calculus of equivalent statements (fourth paper).
\newblock \emph{Proceedings of the London Mathematical Society}, 11:\penalty0
  113--121, 1880.

\bibitem[MacColl(1897)]{maccoll:1897}
Hugh MacColl.
\newblock The calculus of equivalent statements (sixth paper).
\newblock \emph{Proceedings of the London Mathematical Society}, 28:\penalty0
  555--579, 1897.

\bibitem[MacLane(1995)]{maclane:1995}
Saunders MacLane.
\newblock Mathematics at {G}\"ottingen under the {N}azis.
\newblock \emph{Notices of the American Mathematical Society}, 42:\penalty0
  1134--1138, 1995.

\bibitem[Maistrov(1974)]{maistrov:1974}
Leonid~E. Maistrov.
\newblock \emph{Probability Theory: A Historical Sketch}.
\newblock Academic Press, New York, 1974.
\newblock Translated and edited by Samuel Kotz.

\bibitem[Markov(1900)]{markov:1900}
Andrei~A. Markov.
\newblock \emph{Исчисление вероятностей (Calculus of
  Probability)}.
\newblock Типография Императорской Академии
  Наук, Saint Petersburg, 1900.
\newblock Second edition 1908, fourth 1924.

\bibitem[Markov(1912)]{markov:1912}
Andrei~A. Markov.
\newblock \emph{Wahrscheinlichkeits\-rechnung}.
\newblock Teubner, Leipzig, 1912.
\newblock Translation of second edition of \citet{markov:1900}. Free on-line at
  http://historical.library.cornell.edu.

\bibitem[Martin(1996)]{martin:1996}
Thierry Martin.
\newblock \emph{Probabilit\'es et critique philosophique selon Cournot}.
\newblock Vrin, Paris, 1996.

\bibitem[Martin(1998)]{martin:1998}
Thierry Martin.
\newblock \emph{Bibliographie cournotienne}.
\newblock Annales litt\'eraires de l'Universit\'e Franche-Comt\'e,
  Besan\c{c}on, 1998.

\bibitem[Martin(2003)]{martin:2003}
Thierry Martin.
\newblock Probabilit\'e et certitude.
\newblock In Thierry Martin, editor, \emph{Probabilit\'es subjectives et
  rationalit\'e de l'action}, pages 119--134. CNRS \'Editions, Paris, 2003.

\bibitem[Martin-L\"of(1969)]{martinlof:1969a}
Per Martin-L\"of.
\newblock The literature on von {M}ises' {K}ollektivs revisited.
\newblock \emph{Theoria}, 35:\penalty0 12--37, 1969.

\bibitem[Martin-L\"of(December 1966--February 1967)]{martinlof:1966}
Per Martin-L\"of.
\newblock Statistics from the point of view of statistical mechanics.
\newblock Notes by Ole J{\o}rsboe. Matematisk Institut. Aarhus University. 36
  pp, December 1966--February 1967.

\bibitem[Martin-L\"of(September 1969--May 1970)]{martinlof:1969}
Per Martin-L\"of.
\newblock Statistiska modeller.
\newblock Notes by Rold Sundberg. Institutet f\"or f\"ors\"akringsmatematik ock
  mathematisk statistik. Stockholm University. 158 pp, September 1969--May
  1970.

\bibitem[Masani(1990)]{masani:1990}
Pesi~R. Masani.
\newblock \emph{Norbert Wiener, 1894--1964}.
\newblock Birkh\"auser, Basel, 1990.

\bibitem[Mazliak(2003)]{mazliak:2003}
Laurent Mazliak.
\newblock {A}ndrei {N}ikolaievitch {K}olmogorov (1902--1987). {U}n aper\c{c}u
  de l'homme et de l'{\oe}uvre probabiliste.
\newblock Pr\'epublication PMA-785, http://www.probab.jussieu.fr. Universit\'e
  Paris VI, 2003.

\bibitem[Medolaghi(1907)]{medolaghi:1907}
Paolo Medolaghi.
\newblock La logica matematica ed il calcolo delle probabilit\'a.
\newblock \emph{Bollettino dell'Associazione Italiana per l'Incremento delle
  Scienze Attuariali}, 18:\penalty0 20--39, 1907.

\bibitem[Meinong(1915)]{meinong:1915}
Alexius Meinong.
\newblock \emph{\"Uber M\"oglichkeit und Wahrscheinlichkeit: Beitr\"age zur
  Gegenstandstheorie und Erkenntnistheorie}.
\newblock Barth, Leipzig, 1915.

\bibitem[Mirowski(1994)]{mirowski:1994}
Philip Mirowski, editor.
\newblock \emph{Edgeworth on Chance, Economic Hazard, and Statistics}.
\newblock Rowman \& Littlefield, Lanham, MD, 1994.

\bibitem[Molina(1944)]{molina:1944}
Edward~C. Molina.
\newblock Arne {F}isher, 1887--1944.
\newblock \emph{Journal of the American Statistical Association}, 39:\penalty0
  251--252, 1944.

\bibitem[Mordukh(1923)]{mordukh:1923}
Jacob Mordukh.
\newblock О связанных испытаниях, отвечающих
  условию сто\-хас\-ти\-чес\-кой
  коммутативности ({O}n connected trials satisfying the
  condition of stochastic commutativity).
\newblock \emph{Труды русских ученых за границей
  (Work of Russian Scientists Abroad, published in Berlin)}, 2:\penalty0
  102--125, 1923.
\newblock English translation in Sheynin (2000)\nocite{sheynin:2000},
  pp.~209--223. We have seen only this English translation, not the original.

\bibitem[Nagel(1939)]{nagel:1939}
Ernest Nagel.
\newblock \emph{Principles of the Theory of Probability}.
\newblock University of Chicago Press, Chicago, 1939.
\newblock Volume I, Number 6 of the International Encyclopedia of Unified
  Science, edited by Otto Neurath.

\bibitem[Neyman(1937)]{neyman:1937}
Jerzy Neyman.
\newblock Outline of a theory of statistical estimation based on the classical
  theory of probability.
\newblock \emph{Philosophical Transactions of the Royal Society, Series A},
  236:\penalty0 333--380, 1937.

\bibitem[Neyman(1938)]{neyman:1938}
Jerzy Neyman.
\newblock L'estimation statistique, trait\'ee comme un probl\`eme classique de
  probabilit\'es.
\newblock In  \citet{wavre:1938}, pages 25--57 of the sixth fascicle, number
  739, \emph{Conceptions diverses}.
\newblock This celebrated colloquium, chaired by Maurice Fr\'echet, was held in
  October 1937 at the University of Geneva. Participants included Cram\'er,
  D{\oe}blin, Feller, de Finetti, Heisenberg, Hopf, L\'evy, Neyman, P\`olya,
  Steinhaus, and Wald, and communications were received from Bernstein,
  Cantelli, Glivenko, Jordan, Kolmogorov, von Mises, and Slutsky. The
  proceedings were published by Hermann in eight fascicles in their series
  \textit{Actualit\'es Scientifiques et Industrielles}. The first seven
  fascicles appeared in 1938 as numbers 734 through 740; the eighth, de
  Finetti's summary of the colloquium, appeared in 1939 as number 766. See de
  Finetti (1939)\nocite{definetti:1939}.

\bibitem[Neyman(1951)]{neyman:1951}
Jerzy Neyman.
\newblock Foundation of the general theory of statistical estimation.
\newblock In  \citet{bayer:1951}, pages 83--95.

\bibitem[Nikodym(1930)]{nikodym:1930}
Otton Nikodym.
\newblock Sur une g\'en\'eralisation des int\'egrales de {M}.~{J}. {R}adon.
\newblock \emph{Fundamenta Mathematicae}, 15:\penalty0 131--179, 1930.

\bibitem[Ondar(1981)]{ondar:1981}
Kh.~O. Ondar, editor.
\newblock \emph{The Correspondence Between A.~A. Markov and A.~A. Chuprov on
  the Theory of Probability and Mathematical Statistics}.
\newblock Springer, New York, 1981.
\newblock Translated from the Russian by Charles M. and Margaret Stein.

\bibitem[Onicescu(1967)]{onicescu:1967}
Octav Onicescu.
\newblock Le livre de {G}. {C}astelnuovo ``{C}alcolo della {P}robabilit\`a e
  {A}pplicazioni'' comme aboutissant de la suite des grands livres sur les
  probabilit\'es.
\newblock In \emph{Simposio internazionale di geometria algebrica (Roma, 30
  settembre--5 ottobre 1965)}, pages xxxvii--liii, Rome, 1967. Edizioni
  Cremonese.

\bibitem[Paley et~al.(1933)Paley, Wiener, and
  Zygmund]{paley/wiener/zygmund:1933}
R.~E.~A.~C. Paley, Norbert Wiener, and Antoni Zygmund.
\newblock Notes on random functions.
\newblock \emph{Mathematische Zeitschrift}, 37:\penalty0 647--668, 1933.

\bibitem[Pallez(1949)]{pallez:1949}
A.~Pallez.
\newblock Normes de la statistique, du calcul des probabilit\'es et des erreurs
  de mesures.
\newblock \emph{Journal de la Soci\'et\'e de Statistique de Paris}, pages
  125--133, 1949.

\bibitem[Pearson(1990)]{pearson:1990}
Egon~S. Pearson.
\newblock \emph{`Student': A Statistical Biography of William Sealy Gosset}.
\newblock Oxford University Press, Oxford, 1990.
\newblock Based on writings by E.~S. Pearson. Edited and augmented by R.~L.
  Plackett, with the assistance of G.~A. Barnard.

\bibitem[Peirce(1867)]{peirce:1867}
Charles~Saunders Peirce.
\newblock On an improvement in {B}oole's calculus of logic.
\newblock \emph{Proceedings of the American Academy of Arts and Sciences},
  7:\penalty0 250--261, 1867.

\bibitem[Peirce(1878)]{peirce:1878}
Charles~Saunders Peirce.
\newblock Illustrations of the logic of science. {Fo}urth paper---{T}he
  probability of induction.
\newblock \emph{Popular Science Monthly}, 12:\penalty0 705--718, 1878.

\bibitem[Pier(1994{\natexlab{a}})]{pier:1994}
Jean-Paul Pier, editor.
\newblock \emph{Development of Mathematics 1900--1950}.
\newblock Birkh\"auser, Basel, 1994{\natexlab{a}}.

\bibitem[Pier(1994{\natexlab{b}})]{pier:1994a}
Jean-Paul Pier.
\newblock Int\'egration et mesure 1900--1950.
\newblock In \emph{Development of Mathematics 1900--1950} \citet{pier:1994},
  pages 517--564.

\bibitem[Poincar\'e(1890)]{poincare:1890}
Henri Poincar\'e.
\newblock Sur le probl\`eme des trois corps et les \'equations de la dynamique.
\newblock \emph{Acta Mathematica}, 13:\penalty0 1--271, 1890.

\bibitem[Poincar\'e(1896)]{poincare:1896}
Henri Poincar\'e.
\newblock \emph{Calcul des probabilit\'es. Le\c{c}ons profess\'ees pendant le
  deuxi\`eme semestre 1893--1894}.
\newblock Gauthier-Villars, Paris, 1896.
\newblock Free on-line at http://historical.library.cornell.edu.

\bibitem[Poincar\'e(1912)]{poincare:1912}
Henri Poincar\'e.
\newblock \emph{Calcul des probabilit\'es}.
\newblock Gauthier-Villars, Paris, 1912.
\newblock Second edition of \citet{poincare:1896}.

\bibitem[Poisson(1837)]{poisson:1837}
Sim\'eon-Denis Poisson.
\newblock \emph{Recherches sur la probabilit\'e des judgments en mati\`ere
  criminelle et en mati\`ere civile, pr\'ec\'ed\'es des r\`egles g\'en\'erale
  du calcul des probabilit\'es}.
\newblock Bachelier, Paris, 1837.

\bibitem[Popper(1935)]{popper:1935}
Karl~R. Popper.
\newblock \emph{Logik der Forschung: Zur Erkenntnistheorie der modernen
  Naturwissenschaft}.
\newblock Springer, Vienna, 1935.
\newblock An English translation, \textit{The Logic of Scientific Discovery},
  with extensive new appendices, was published by Hutchinson, London, in 1959.

\bibitem[Popper(1938)]{popper:1938}
Karl~R. Popper.
\newblock A set of independent axioms for probability.
\newblock \emph{Mind}, 47:\penalty0 275--277, 1938.

\bibitem[Popper(1983)]{popper:1983}
Karl~R. Popper.
\newblock \emph{Realism and the Aim of Science}.
\newblock Hutchinson, London, 1983.

\bibitem[Porter(1986)]{porter:1986}
Theodore Porter.
\newblock \emph{The {R}ise of {S}tatistical {T}hinking, 1820--1900}.
\newblock Princeton University Press, Princeton, NJ, 1986.

\bibitem[Prevost and Lhuilier(1799)]{prevost/lhuilier:1799}
Pierre Prevost and Simon Lhuilier.
\newblock M\'emoire sur l'art d'estimer la probabilit\'e des causes par les
  effets.
\newblock \emph{M\'emoires de l'{A}cad\'emie royale des Sciences et
  Belle-Lettres, Classe de Math\'ematique, Berlin}, pages 3--25, 1799.
\newblock Volume for 1796.

\bibitem[Rademacher(1922)]{rademacher:1922}
Hans Rademacher.
\newblock Einige {S}\"atze \"uber {R}eihen von allgemeinen
  {O}rthogonalfunktionen.
\newblock \emph{Mathematische Annalen}, 87:\penalty0 112--138, 1922.

\bibitem[Radon(1913)]{radon:1913}
Johann Radon.
\newblock Theorie und {A}nwendungen der absolut additiven {M}engen\-funktionen.
\newblock \emph{Sitzungsberichte der kaiserlichen {A}kademie der
  {W}issenschaften, {M}athematisch-{N}aturwissenschaftliche {K}lasse},
  $122_{IIa}$:\penalty0 1295--1438, 1913.
\newblock Reprinted in his \emph{Collected Works}, 1:45--188. Birkh\"auser,
  Basel, 1987.

\bibitem[Ramsey(1931)]{ramsey:1931}
Frank~P. Ramsey.
\newblock \emph{The Foundations of Mathematics and Other Logical Essays}.
\newblock Routledge, London, 1931.

\bibitem[Regazzini(1987a)]{regazzini:1987a}
Eugenio Regazzini.
\newblock Teoria e calcolo delle probabilit\`a.
\newblock In Angelo Guerraggio, editor, \emph{La Matematica Italiana tra le Due
  Guerre Mondiali}, pages 339--386, Bologna, 1987a. Pitagora.
\newblock Reprinted with minor changes as pp.~569--621 of \emph{La Matematica
  Italiana dopo l'Unit\`a: Gli anni tra le due guerre mondiali}, edited by
  Simonetta Di Sieno, Angelo Guerraggio, and Pietro Nastasi, Marcos y Marcos,
  Milan, 1998.

\bibitem[Regazzini(1987b)]{regazzini:1987b}
Eugenio Regazzini.
\newblock Probability theory in {I}taly between the two world wars. {A} brief
  historical review.
\newblock \emph{Metron}, 45\penalty0 (3--4):\penalty0 5--42, 1987b.

\bibitem[Reichenbach(1916)]{reichenbach:1916}
Hans Reichenbach.
\newblock \emph{Der Begriff der Wahrscheinlichkeit f\"ur die mathematische
  Darstellung der Wirklichkeit}.
\newblock Barth, Leipzig, 1916.

\bibitem[Reichenbach(1932)]{reichenbach:1932}
Hans Reichenbach.
\newblock Axiomatik der {W}ahrscheinlichkeitsrechnung.
\newblock \emph{Mathematische Zeitschrift}, 34:\penalty0 568--619, 1932.

\bibitem[Richter(1954)]{richter:1954}
Hans Richter.
\newblock {Z}ur {B}egr\"undung der {W}ahrscheinlichkeitsrechnung.
\newblock \emph{Dialectica}, 8:\penalty0 48--77, 1954.

\bibitem[Richter(1956)]{richter:1956}
Hans Richter.
\newblock \emph{Wahrscheinlichkeitstheorie}.
\newblock Springer, Berlin, 1956.

\bibitem[Rietz(1934)]{rietz:1934}
Henry~L. Rietz.
\newblock Review of {K}olmogorov (1933)\nocite{kolmogorov:1933}.
\newblock \emph{Bulletin of the American Mathematical Society}, 40:\penalty0
  522--523, 1934.

\bibitem[Rogers and Williams(2000)]{rogers/williams:2000}
L.~C.~G. Rogers and David Williams.
\newblock \emph{Diffusions, Markov Processes, and Martingales. Volume 1.
  Foundations}.
\newblock Cambridge University Press, Cambridge, second, reprinted edition,
  2000.

\bibitem[Saks(1933)]{saks:1933}
Stanis{\l}aw Saks.
\newblock \emph{Th\'eorie de l'int\'egrale}.
\newblock Monografie Matematyczne, Warsaw, 1933.

\bibitem[Saks(1937)]{saks:1937}
Stanis{\l}aw Saks.
\newblock \emph{Theory of the Integral}.
\newblock Stechert, New York, 1937.

\bibitem[Schneider(1988)]{schneider:1988}
Ivo Schneider, editor.
\newblock \emph{Die Entwicklung der Wahrscheinlichkeitstheorie von den
  Anf\"angen bis 1933: Einf\"uhrungen und Texte}.
\newblock Wissenschaftliche Buchgesellschaft, Darmstadt, 1988.

\bibitem[Schwartz(1973)]{schwartz:1973}
Laurent Schwartz.
\newblock \emph{Radon Measures on Arbitrary Spaces and Cylindrical Measures}.
\newblock Oxford, New York, 1973.

\bibitem[Schwartz(1997)]{schwartz:1997}
Laurent Schwartz.
\newblock \emph{Un Math\'ematician aux prises avec le si\`ecle}.
\newblock Jacob, Paris, 1997.
\newblock A translation, \emph{A mathematician grappling with his century}, was
  published by Birkh\"auser, Basel, in 2001.

\bibitem[Segal(1992)]{segal:1992}
Irving~Ezra Segal.
\newblock Norbert {W}iener. {N}ovember 26, 1894--{M}arch 18, 1964.
\newblock \emph{Biographical Memoires, National Academy of Sciences of the
  United States of America}, 61:\penalty0 388--436, 1992.

\bibitem[Segal(2003)]{segal:2003}
Sanford~L. Segal.
\newblock \emph{Mathematicians under the Nazis}.
\newblock Princeton University Press, Princeton, NJ, 2003.

\bibitem[Seneta(1992)]{seneta:1992}
Eugene Seneta.
\newblock On the history of the law of large numbers and {B}oole's inequality.
\newblock \emph{Historia Mathematica}, 19:\penalty0 24--39, 1992.

\bibitem[Seneta(1997)]{seneta:1997}
Eugene Seneta.
\newblock Boltzmann, {L}udwig {E}dward.
\newblock In \emph{Leading Personalities in Statistical Sciences}
  \citet{johnson/kotz:1997}, pages 353--354.

\bibitem[Seneta(2001)]{seneta:2001}
Eugene Seneta.
\newblock Aleksander {A}leksandrovich {C}huprov (or {T}schuprow).
\newblock In  \citet{heyde/seneta:2001}, pages 303--307.

\bibitem[Seneta(2003)]{seneta:2003}
Eugene Seneta.
\newblock Mathematics, religion and {M}arxism in the {S}oviet {U}nion in the
  1930s.
\newblock \emph{Historia Mathematica}, 71:\penalty0 319--334, 2003.

\bibitem[Shafer(1985)]{shafer:1985}
Glenn Shafer.
\newblock Conditional probability.
\newblock \emph{International Statistical Review}, 53:\penalty0 261--277, 1985.

\bibitem[Shafer(1996)]{shafer:1996}
Glenn Shafer.
\newblock \emph{The Art of Causal Conjecture}.
\newblock The MIT Press, Cambridge, MA, 1996.

\bibitem[Shafer and Vovk(2001)]{shafer/vovk:2001}
Glenn Shafer and Vladimir Vovk.
\newblock \emph{Probability and Finance: It's Only a Game}.
\newblock Wiley, New York, 2001.

\bibitem[Sheynin(1996)]{sheynin:1996}
Oscar Sheynin.
\newblock \emph{Aleksandr A. Chuprov: Life, Work, Correspondence. The making of
  mathematical statistics}.
\newblock Vandenhoeck \& Ruprecht, G\"ottingen, 1996.

\bibitem[Sheynin(1998a)]{sheynin:1998a}
Oscar Sheynin, editor.
\newblock \emph{From Markov to Kolmogorov. Russian papers on probability and
  statistics. Containing essays of S.~N. Bernstein, A.~A. Chuprov, B.~V.
  Gnedenko, A.~Ya. Khinchin, A.~N. Kolmogorov, A.~M. Liapunov, A.~A. Markov and
  V.~V. Paevsky}.
\newblock H\"ansel-Hohenhausen, Egelsbach, Germany, 1998a.
\newblock Translations from Russian into English by the editor. Deutsche
  Hochschulschriften No. 2514. In microfiche.

\bibitem[Sheynin(1998b)]{sheynin:1998b}
Oscar Sheynin, editor.
\newblock \emph{From Davidov to Romanovsky. More Russian papers on probability
  and statistics. S.~N. Bernstein, B.~V. Gnedenko, A.~N. Kolmogorov, A.~M.
  Liapunov, P.~A. Nekrasov, A.~A. Markov, Kh.O. Ondar, T.~A. Sarymsakov, N.~V.
  Smirnov}.
\newblock H\"ansel-Hohenhausen, Egelsbach, Germany, 1998b.
\newblock Translations from Russian into English by the editor. Deutsche
  Hochschulschriften No. 2579. In microfiche.

\bibitem[Sheynin(1999a)]{sheynin:1999a}
Oscar Sheynin.
\newblock \emph{Russian Papers on the History of Probability and Statistics}.
\newblock H\"ansel-Hohenhausen, Egelsbach, Germany, 1999a.
\newblock This volume contains articles by Oscar Sheynin, originally published
  in Russian and now translated into English by the author. Deutsche
  Hochschulschriften No. 2621. In microfiche.

\bibitem[Sheynin(1999b)]{sheynin:1999b}
Oscar Sheynin, editor.
\newblock \emph{From Bortkiewicz to Kolmogorov. Yet more Russian papers on
  probability and statistics}.
\newblock H\"ansel-Hohenhausen, Egelsbach, Germany, 1999b.
\newblock Translations from Russian into English by the editor. Deutsche
  Hochschulschriften No. 2656. In microfiche.

\bibitem[Sheynin(2000)]{sheynin:2000}
Oscar Sheynin, editor.
\newblock \emph{From Daniel Bernoulli to Urlanis. Still more Russian papers on
  probability and statistics}.
\newblock H\"ansel-Hohenhausen, Egelsbach, Germany, 2000.
\newblock Translations from Russian into English by the editor. Deutsche
  Hochschulschriften No. 2696. In microfiche.

\bibitem[Sheynin(2004)]{sheynin:2004}
Oscar Sheynin.
\newblock Fechner as statistician.
\newblock \emph{British Journal of Mathematical and Statistical Psychology},
  57:\penalty0 53--72, 2004.

\bibitem[Shiryaev(1989)]{shiryaev:1989}
Albert~N. Shiryaev.
\newblock Kolmogorov: Life and creative activities.
\newblock \emph{Annals of Probability}, 17:\penalty0 866--944, 1989.

\bibitem[Shiryaev(1993)]{shiryaev:1993}
Albert~N. Shiryaev, editor.
\newblock \emph{Колмогоров в воспоминаниях (Kolmogorov
  in Remembrance)}.
\newblock Nauka, Moscow, 1993.

\bibitem[Shiryaev(2000)]{shiryaev:2000}
Albert~N. Shiryaev.
\newblock Andrei {N}ikolaevich {K}olmogorov ({A}pril 25, 1903--{O}ctober 20,
  1987). {A} biographical sketch of his life and creative paths.
\newblock In  \citet{flower:2000}, pages 1--87.
\newblock This article originally appeared in Russian in \textit{Kolmogorov in
  Remembrance}, edited by Albert N. Shiryaev, Nauka, Moscow, 1993.

\bibitem[Shiryaev(2003a)]{shiryaev:2003a}
Albert~N. Shiryaev.
\newblock Жизнь и творчество. Биографический
  очерк ({L}ife and creative work. {B}iographical essay).
\newblock In \emph{Колмогоров: Юбилейное издание в
  3-х книгах (Kolmogorov: Jubilee Publication in Three Volumes)}
  \citet{shiryaev:2003b}, pages 17--209.

\bibitem[Shiryaev(2003b)]{shiryaev:2003b}
Albert~N. Shiryaev, editor.
\newblock \emph{Колмогоров: Юбилейное издание в 3-х
  книгах (Kolmogorov: Jubilee Publication in Three Volumes)}.
\newblock ФИЗМАТЛИТ (Physical-Mathematical Literature), Moscow, 2003b.

\bibitem[Siegmund-Schultze(2004)]{siegmund-schultze:2004b}
Reinhard Siegmund-Schultze.
\newblock Mathematicians forced to philosophize: {A}n introduction to
  {K}hinchin's paper on von {M}ises' theory of probability.
\newblock \emph{Science in Context}, 17\penalty0 (3):\penalty0 373--390, 2004.

\bibitem[Sierpi\'nski(1918)]{sierpinski:1918}
Wac{\l}aw Sierpi\'nski.
\newblock Sur une d\'efinition axiomatique des ensembles mesurables ({L}).
\newblock \emph{Bulletin International de l'Academie des Sciences de Cracovie
  A}, pages 173--178, 1918.
\newblock Reprinted on pp.~256--260 of Sierpi\'nski's \emph{Oeuvres choisies},
  Volume II, PWN (Polish Scientific Publishers), Warsaw, 1975.

\bibitem[Slutsky(1922)]{slutsky:1922}
Evgeny Slutsky.
\newblock К вопросу о логических основах теории
  вероятностей ({O}n the question of the logical foundation of the
  theory of probability).
\newblock \emph{Вестник статистики (Bulletin of Statistics)},
  12:\penalty0 13--21, 1922.

\bibitem[Slutsky(1925)]{slutsky:1925}
Evgeny Slutsky.
\newblock {\"U}ber stochastische {A}symptoten und {G}renzwerte.
\newblock \emph{Metron}, 5:\penalty0 3--89, 1925.

\bibitem[Steinhaus(1923)]{steinhaus:1923}
Hugo Steinhaus.
\newblock Les probabilit\'es d\'enombrables et leur rapport \`a la th\'eorie de
  la mesure.
\newblock \emph{Fundamenta Mathematicae}, 4:\penalty0 286--310, 1923.

\bibitem[Steinhaus(1930a)]{steinhaus:1930a}
Hugo Steinhaus.
\newblock {\"U}ber die {W}ahrscheinlichkeit daf\"ur, da\ss~der
  {K}onvergenzkreis einer {P}otenzreihe ihre nat\"urliche {G}renze ist.
\newblock \emph{Mathematische Zeitschrift}, 31:\penalty0 408--416, 1930a.
\newblock Received by the editors 5 August 1929.

\bibitem[Steinhaus(1930b)]{steinhaus:1930b}
Hugo Steinhaus.
\newblock Sur la probabilit\'e de la convergence de s\'eries. {P}remi\`ere
  communication.
\newblock \emph{Studia Mathematica}, 2:\penalty0 21--39, 1930b.
\newblock Received by the editors 24 October 1929.

\bibitem[Stigler(1973)]{stigler:1973}
Stephen~M. Stigler.
\newblock Simon {N}ewcomb, {P}ercy {D}aniell, and the history of robust
  estimation 1885--1920.
\newblock \emph{Journal of the American Statistical Association}, 68:\penalty0
  872--879, 1973.

\bibitem[Stigler(1986)]{stigler:1986}
Stephen~M. Stigler.
\newblock \emph{The History of Statistics: The Measurement of Uncertainty
  before 1900}.
\newblock Harvard University Press, Cambridge, MA, 1986.

\bibitem[Stone(1957)]{stone:1957}
Marshall Stone.
\newblock Mathematics and the future of science.
\newblock \emph{Bulletin of the American Mathematical Society}, 63:\penalty0
  61--76, 1957.

\bibitem[Taylor(1982)]{taylor:1982}
Angus~E. Taylor.
\newblock A study of {M}aurice {F}r\'echet: {I}. {H}is early work on point set
  theory and the theory of functionals.
\newblock \emph{Archive for History of Exact Sciences}, 27:\penalty0 233--295,
  1982.

\bibitem[Taylor(1985)]{taylor:1985}
Angus~E. Taylor.
\newblock A study of {M}aurice {F}r\'echet: {II}. {M}ainly about his work on
  general topology, 1909--1928.
\newblock \emph{Archive for History of Exact Sciences}, 34:\penalty0 279--380,
  1985.

\bibitem[Taylor(1987)]{taylor:1987}
Angus~E. Taylor.
\newblock A study of {M}aurice {F}r\'echet: {III}. {F}r\'echet as analyst,
  1909--1930.
\newblock \emph{Archive for History of Exact Sciences}, 37:\penalty0 25--76,
  1987.

\bibitem[Tornier(1933)]{tornier:1933}
Erhard Tornier.
\newblock {G}rundlagen der {W}ahrscheinlichkeitsrechnung.
\newblock \emph{Acta Mathematica}, 60:\penalty0 239--380, 1933.

\bibitem[Ulam(1930)]{ulam:1930}
Stanis{\l}aw Ulam.
\newblock Zur {M}asstheorie in der allgemeinen {M}engenlehre.
\newblock \emph{Fundamenta Mathematicae}, 16:\penalty0 140--150, 1930.

\bibitem[Ulam(1932)]{ulam:1932}
Stanis{\l}aw Ulam.
\newblock Zum {M}assbegriffe in {P}roduktr\"aumen.
\newblock In \emph{Verhandlung des Internationalen Mathematiker-Kongress
  Z\"urich}, volume~II, pages 118--119, 1932.

\bibitem[Ulam(1943)]{ulam:1943}
Stanis{\l}aw Ulam.
\newblock What is measure?
\newblock \emph{American Mathematical Monthly}, 50:\penalty0 597--602, 1943.

\bibitem[Urban(1923)]{urban:1923}
Friedrich~Maria Urban.
\newblock \emph{Grundlagen der Wahrscheinlichkeitsrechnung und der Theorie der
  Beobachtungsfehler}.
\newblock Teubner, Leipzig, 1923.

\bibitem[Uspensky(1937)]{uspensky:1937}
James~V. Uspensky.
\newblock \emph{Introduction to Mathematical Probability}.
\newblock McGraw-Hill, New York, 1937.

\bibitem[Van~Dantzig(1951)]{vandantzig:1951}
David Van~Dantzig.
\newblock Sur l'analyse logique des relations entre le calcul des
  probabilit\'es et ses applications.
\newblock In  \citet{bayer:1951}, pages 49--66.

\bibitem[Van~Vleck(1915)]{vanvleck:1915}
E.~B. Van~Vleck.
\newblock The r\^ole of the point-set thoery in geometry and physics.
  {P}residential address delivered before the {A}merican {M}athematical
  {S}ociety, {J}anuary 1, 1915.
\newblock \emph{Bulletin of the American Mathematical Society}, 21:\penalty0
  321--341, 1915.

\bibitem[Venn(1888)]{venn:1888}
John Venn.
\newblock \emph{The Logic of Chance}.
\newblock Macmillan, London and New York, third edition, 1888.

\bibitem[Ville(1936)]{ville:1936}
Jean-Andr\'e Ville.
\newblock Sur la notion de collectif.
\newblock \emph{Comptes rendus des S\'eances de l'Acad\'emie des Sciences},
  203:\penalty0 26--27, 1936.

\bibitem[Ville(1939)]{ville:1939}
Jean-Andr\'e Ville.
\newblock \emph{\'{E}tude critique de la notion de collectif}.
\newblock Gauthier-Villars, Paris, 1939.
\newblock This differs from Ville's dissertation, which was defended in March
  1939, only in that a 17-page introductory chapter replaces the dissertation's
  one-page introduction.

\bibitem[Ville(1985)]{ville:1985}
Jean-Andr\'e Ville.
\newblock Letter to {P}ierre {C}r\'epel, dated {F}ebruary 2, 1985, 1985.

\bibitem[von Bortkiewicz(1901)]{vonbortkiewicz:1901}
Ladislaus von Bortkiewicz.
\newblock Anwendungen der {W}ahrscheinlichkeitsrechnung auf {S}tatistik.
\newblock In \emph{Encyklop\"adie der mathematischen Wissenschaften, Bd. I,
  Teil 2}, pages 821--851. Teubner, Leipzig, 1901.

\bibitem[von Bortkiewicz(1917)]{vonbortkiewicz:1917}
Ladislaus von Bortkiewicz.
\newblock \emph{Die Iterationen. Ein Beitrag zur
  Wahr\-schein\-lichkeits\-theorie}.
\newblock Springer, Berlin, 1917.
\newblock Free on-line at
  http://his\linebreak[0]tor\linebreak[0]i\linebreak[0]cal.li\linebreak[0]brary.cornell.edu.

\bibitem[von Hirsch(1954)]{vonhirsch:1954}
Guy von Hirsch.
\newblock Sur un aspect paradoxal de la th\'eorie des probabilit\'es.
\newblock \emph{Dialetica}, 8:\penalty0 125--144, 1954.

\bibitem[von Mises(1919)]{vonmises:1919}
Richard von Mises.
\newblock Grundlagen der {W}ahrscheinlichkeitsrechnung.
\newblock \emph{Mathematische Zeitschrift}, 5:\penalty0 52--99, 1919.

\bibitem[von Mises(1928)]{vonmises:1928}
Richard von Mises.
\newblock \emph{Wahrscheinlichkeitsrechnung, Statistik und Wahrheit}.
\newblock Springer, Vienna, 1928.
\newblock Second edition 1936, third 1951. A posthumous fourth edition, edited
  by his widow Hilda Geiringer, appeared in 1972. English editions, under the
  title \emph{Probability, Statistics and Truth}, appeared in 1939 and 1957.

\bibitem[von Mises(1931)]{vonmises:1931}
Richard von Mises.
\newblock \emph{Wahrscheinlichkeitsrechnung und ihre Anwendung in der Statistik
  und theoretischen Physik}.
\newblock Deuticke, Leipzig and Vienna, 1931.

\bibitem[von Neumann(1932)]{vonneumann:1932}
John von Neumann.
\newblock Zur {O}peratorenmethode in der klassischen {M}echanik.
\newblock \emph{Annals of Mathematics}, 33\penalty0 (2):\penalty0 587--642,
  1932.

\bibitem[von Plato(1994)]{vonplato:1994}
Jan von Plato.
\newblock \emph{Creating Modern Probability: Its Mathematics, Physics, and
  Philosophy in Historical Perspective}.
\newblock Cambridge University Press, Cambridge, 1994.

\bibitem[von Smoluchowski(1906)]{vonsmoluchowski:1906}
Marian von Smoluchowski.
\newblock Zarys kinetycznej teoriji ruch\'ow browna i roztwor\'ow metnych.
\newblock \emph{Rozprawy i Sprawozdania z Posiedze\'n Wydzialu
  Matematyczno-Przyrodniczego Akademii Umiejetno\'sci}, 3:\penalty0 257--282,
  1906.
\newblock A German translation appeared in the \emph{Annalen der Physik}, 21,
  pp.~756--780 in 1906.

\bibitem[Vovk and Shafer(2003)]{vovk/shafer:2003}
Vladimir Vovk and Glenn Shafer.
\newblock Kolmogorov's contributions to the foundations of probability.
\newblock \emph{Problems of Information Transmission}, 39\penalty0
  (1):\penalty0 21--31, 2003.
\newblock Prepublication version at http://www.probabilityandfinance.com.

\bibitem[Vucinich(1999)]{vucinich:1999}
Alexander Vucinich.
\newblock Mathematics and dialectics in the {S}oviet {U}nion: The pre-{S}talin
  period.
\newblock \emph{Historia Mathematica}, 26:\penalty0 107--124, 1999.

\bibitem[Vucinich(2000)]{vucinich:2000}
Alexander Vucinich.
\newblock Soviet mathematics and dialectics in the {S}talin era.
\newblock \emph{Historia Mathematica}, 27:\penalty0 54--76, 2000.

\bibitem[Vucinich(2002)]{vucinich:2002}
Alexander Vucinich.
\newblock Soviet mathematics and dialectics in the post-{S}talin era: {N}ew
  horizons.
\newblock \emph{Historia Mathematica}, 29:\penalty0 13--39, 2002.

\bibitem[Wald(1936)]{wald:1936}
Abraham Wald.
\newblock Sur la notion de collectif dans le calcul des probabilit\'es.
\newblock \emph{Comptes rendus hebdomadaires des s\'eances de l'Acad\'emie des
  Sciences}, 202:\penalty0 180--183, 1936.

\bibitem[Wald(1937)]{wald:1937}
Abraham Wald.
\newblock Die {W}iderspruchfreiheit des {K}ollectivbegriffes der
  {W}ahr\-schein\-lich\-keits\-rechnung.
\newblock \emph{Ergebnisse eines Mathematischen Kolloquiums}, 8:\penalty0
  38--72, 1937.
\newblock This journal, or series of publications, reported results from Karl
  Menger's famous Vienna Colloquium. Participants included von Neumann,
  Morgenstern, and Wald. The eighth volume was the last in the series, because
  the colloquium ended with the Nazi invasion of Austria in 1938. In 1939,
  Menger started a second series in English, \textit{Reports of a mathematical
  colloquium}, at the University of Notre Dame.

\bibitem[Wald(1938)]{wald:1938}
Abraham Wald.
\newblock Die {W}iderspruchfreiheit des {K}ollectivbegriffes.
\newblock In  \citet{wavre:1938}, pages 79--99 of the second fascicle, number
  735, \emph{Les fondements du calcul des probabilit\'es}.
\newblock This celebrated colloquium, chaired by Maurice Fr\'echet, was held in
  October 1937 at the University of Geneva. Participants included Cram\'er,
  D{\oe}blin, Feller, de Finetti, Heisenberg, Hopf, L\'evy, Neyman, P\`olya,
  Steinhaus, and Wald, and communications were received from Bernstein,
  Cantelli, Glivenko, Jordan, Kolmogorov, von Mises, and Slutsky. The
  proceedings were published by Hermann in eight fascicles in their series
  \textit{Actualit\'es Scientifiques et Industrielles}. The first seven
  fascicles appeared in 1938 as numbers 734 through 740; the eighth, de
  Finetti's summary of the colloquium, appeared in 1939 as number 766. See de
  Finetti (1939)\nocite{definetti:1939}.

\bibitem[Wavre(1938--1939)]{wavre:1938}
Rolin Wavre.
\newblock \emph{Colloque consacr\'e \`a la th\'eorie des probabilit\'es}.
\newblock Hermann, Paris, 1938--1939.
\newblock This celebrated colloquium, chaired by Maurice Fr\'echet, was held in
  October 1937 at the University of Geneva. Participants included Cram\'er,
  D{\oe}blin, Feller, de Finetti, Heisenberg, Hopf, L\'evy, Neyman, P\`olya,
  Steinhaus, and Wald, and communications were received from Bernstein,
  Cantelli, Glivenko, Jordan, Kolmogorov, von Mises, and Slutsky. The
  proceedings were published by Hermann in eight fascicles in their series
  \textit{Actualit\'es Scientifiques et Industrielles}. The first seven
  fascicles appeared in 1938 as numbers 734 through 740; the eighth, de
  Finetti's summary of the colloquium, appeared in 1939 as number 766. See de
  Finetti (1939)\nocite{definetti:1939}.

\bibitem[Weil(1940)]{weil:1940}
Andr\'e Weil.
\newblock Calcul de probabilit\'es, m\'ethode axiomatique, int\'egration.
\newblock \emph{Revue scientifique}, 78:\penalty0 201--208, 1940.

\bibitem[Westergaard(1890)]{westergaard:1890}
Harald Westergaard.
\newblock \emph{Die Grundz\"uge der Theorie der Statistik}.
\newblock Fischer, Jena, 1890.
\newblock Second edition, with H.~C. Nyb{\o}lle as co-author, 1928.

\bibitem[Whittle(2000)]{whittle:2000}
Peter Whittle.
\newblock \emph{Probability via Expectation}.
\newblock Springer, New York, fourth edition, 2000.

\bibitem[Whitworth(1878)]{whitworth:1878}
William~A. Whitworth.
\newblock \emph{Choice and Chance}.
\newblock Cambridge University Press, Cambridge, third edition, 1878.
\newblock Fourth edition 1886, fifth 1901.

\bibitem[Wiener(1920)]{wiener:1920}
Norbert Wiener.
\newblock The mean of a functional of arbitrary elements.
\newblock \emph{Annals of Mathematics}, 22:\penalty0 66--72, December 1920.

\bibitem[Wiener(1921a)]{wiener:1921a}
Norbert Wiener.
\newblock The average of an analytical functional.
\newblock \emph{Proceedings of the National Academy of Science}, 7:\penalty0
  253--260, 1921a.

\bibitem[Wiener(1921b)]{wiener:1921b}
Norbert Wiener.
\newblock The average of an analytical functional and the {B}rownian movement.
\newblock \emph{Proceedings of the National Academy of Science}, 7:\penalty0
  294--298, 1921b.

\bibitem[Wiener(1923)]{wiener:1923}
Norbert Wiener.
\newblock Differential-space.
\newblock \emph{Journal of Mathematics and Physics}, 2:\penalty0 131--174,
  1923.

\bibitem[Wiener(1924)]{wiener:1924}
Norbert Wiener.
\newblock The average value of a functional.
\newblock \emph{Proceedings of the London Mathematical Society}, 22:\penalty0
  454--467, 1924.

\bibitem[Wiener(1935)]{wiener:1935}
Norbert Wiener.
\newblock Random functions.
\newblock \emph{Journal of Mathematics and Physics}, 14:\penalty0 17--23, 1935.

\bibitem[Wiener(1956)]{wiener:1956}
Norbert Wiener.
\newblock \emph{I am a Mathematician. The Later Life of a Prodigy}.
\newblock Doubleday, Garden City, NY, 1956.

\bibitem[Wiener(1976--1985)]{wiener:1976}
Norbert Wiener.
\newblock \emph{Collected Works}.
\newblock MIT Press, Cambridge, MA, 1976--1985.
\newblock Four volumes. Edited by Pesi Masani. Volume 1 includes Wiener's early
  papers on Brownian motion
  \citep{wiener:1920,wiener:1921a,wiener:1921b,wiener:1923,wiener:1924}, with a
  commentary by Kiyosi It\^o.

\bibitem[Wiman(1900)]{wiman:1900}
Anders Wiman.
\newblock {\"U}ber eine {W}ahrscheinlichkeitsaufgabe bei
  {K}ettenbruchentwicklungen.
\newblock \emph{\"Ofversigt af Kongliga Vetenskaps-Akademiens F\"orhandlingar.
  Femtiondesjunde {\AA}rg{\aa}ngen}, 57\penalty0 (7):\penalty0 829--841, 1900.

\bibitem[Wiman(1901)]{wiman:1901}
Anders Wiman.
\newblock \emph{Bemerkung \"uber eine von Gyld\'en aufgeworfene
  Wahrscheinlichkeitsfrage}.
\newblock H{\aa}kan Ohlssons boktrykeri, Lund, 1901.

\bibitem[Young(1911)]{young:1911}
William~Henry Young.
\newblock On a new method in the theory of integration.
\newblock \emph{Proceedings of the London Mathematical Society}, 9:\penalty0
  15--30, 1911.

\bibitem[Young(1915)]{young:1915}
William~Henry Young.
\newblock On integration with respect to a function of bounded variation.
\newblock \emph{Proceedings of the London Mathematical Society}, 13:\penalty0
  109--150, 1915.

\bibitem[Zdravkovska and Duren(1993)]{zdravkovska/duren:1993}
Smilka Zdravkovska and Peter~L. Duren, editors.
\newblock \emph{Golden Years of Moscow Mathematics}.
\newblock American Mathematical Society and London Mathematical Society,
  Providence, RI, and London, 1993.
\newblock Volume 6 of the History of Mathematics Series.

\bibitem[Zund(2002)]{zund:2002}
Joseph~D. Zund.
\newblock George {D}avid {B}irkhoff and {J}ohn von {N}eumann: {A} question of
  priority and the ergodic theorems, 1931--1932.
\newblock \emph{Historia Mathematica}, 29:\penalty0 138--156, 2002.
\end{thebibliography}
\end{document}